\newtheorem{theorem}{Theorem}[section]
\newtheorem{lemma}[theorem]{Lemma}
\newtheorem{corollary}[theorem]{Corollary}
\newtheorem{proposition}[theorem]{Proposition}
\theoremstyle{definition}
\newtheorem{example}[theorem]{Example}
\newtheorem{definition}[theorem]{Definition}
\newtheorem{notation}[theorem]{Notation}
\theoremstyle{remark}
\newtheorem{remark}[theorem]{Remark}
\newcommand\scpr[2]{\langle #1 \, \vert \, #2 \rangle}
\def\A{{\mathcal A}}
\def\Okubo{{\mathcal O}}
\def\Spin{{\mathcal Spin}}
\def\F{{\mathbb F}}
\def\N{{\mathbb N}}
\def\C{{\mathbb C}}
\def\chrs{{\rm char}\, }
\def\rank{{\rm rank}\,}
\def\tr{{\rm tr}\,}
\def\Lin{{\mathcal L}\,}
\def\dim{{\rm dim}\,}
\newcommand{\multrow}[1]{\begin{tabular}{@{}c@{}} #1 \end{tabular}}
\providecommand{\keywords}[1]{\textbf{Keywords:} #1}
\providecommand{\msc}[1]{\textbf{MSC 2020:} #1}
\begin{document}
	
\title{On the lengths of descendingly flexible\\ and descendingly alternative algebras}
\author{A.~E.~Guterman$^a$ and S.~A.~Zhilina$^{b,c,d,}$\thanks{The work of the second author was supported by the Ministry of Science and Higher Education of the Russian Federation (Goszadaniye No. 075-00337-20-03, project No. 0714-2020-0005).}}
\date{\small \em
$^a$Department of Mathematics, Bar-Ilan University, Ramat-Gan, 5290002, Israel\\
$^b$Department of Mathematics and Mechanics, Lomonosov Moscow State\\ University, Moscow, 119991, Russia\\
$^c$Moscow Center for Fundamental and Applied Mathematics, Moscow, 119991, Russia\\
$^d$Moscow Institute of Physics and Technology, Dolgoprudny, 141701, Russia
}

\maketitle

\begin{abstract}
We introduce the classes of descendingly flexible and descendingly alternative algebras over an arbitrary field~$\F$. We suggest a new method based on the sequence of differences between the dimensions of the linear spans of words, which allows us to obtain upper bounds on the lengths of these algebras. We also present an example of an algebra of arbitrarily large dimension such that these bounds are achieved on it asymptotically.
\end{abstract}

\keywords{length function, descendingly flexible algebras, descendingly alternative algebras.}

\msc{15A03, 17A20, 17A30, 17D05.}

\let\thefootnote\relax\footnote{{\em Email addresses:} \texttt{alexander.guterman@biu.ac.il} (A.~E.~Guterman), \texttt{s.a.zhilina@gmail.com}\\ (S.~A.~Zhilina)}

\section{Introduction}

Let $\A$ be an arbitrary finite-dimensional algebra over a field $\F$, possibly non-unital and non-associative. Given a subset $S \subseteq \A$ and $k \in \N$, any product of $k$ elements of $S$ is called a {\em word of length $k$} in letters from~$S$. If the algebra is unital, the unity~$e$ is considered as a word of {\em zero length} in $S$. Otherwise, we assume that there are no words of zero length. 
 
Note that different arrangements of brackets provide different words of the same length. The set of all words in $S$ with length at most $k$ is denoted by $S^k$, $k \ge 0$. Then $k < m$ implies that $S^k \subseteq S^m$. The set of all words with letters from $S$ is denoted by $S^{\infty} = \bigcup \limits_{k=0}^\infty S^k$.

Recall that the linear span of $S$, denoted by $\Lin(S)$, is the set of all finite linear combinations of elements of $S$ with coefficients from~$\F$. We denote $\Lin_k(S) = \Lin(S^k)$, $k \geq 0$, and $\Lin_{\infty}(S) = \bigcup \limits_{k=0}^\infty \Lin_k(S)$. Clearly, for all $k$ we have $\Lin_k(S) \subseteq \Lin_{k+1}(S)$. Moreover, $\Lin_0(S) = \F$ if $\A$ is unital, and $\Lin_0(S) = \{ 0 \}$ otherwise. It can be easily seen that a set $S$ is generating for~$\A$ if and only if $\A = \Lin_{\infty}(S)$.

\begin{definition} \label{definition:system-length} 
The	{\em length of a set} $S$ is $l(S) = \min \{ k \: | \: \Lin_k(S) = \Lin_{\infty}(S) \}$. 
\end{definition}

\begin{definition} \label{definition:algebra-length}
The	{\em length of an algebra $\A$} is the maximum of lengths of its generating sets, i.e., $l(\A) = \max\limits_S \{l(S) \: | \: \Lin_{\infty}(S) = \A\}$. 
\end{definition}

The length function is an important numerical invariant which is useful for the study of finite-dimensional algebras. At present most papers in this area are devoted to the problem of length computation for the algebra $M_n(\F)$ of $n \times n$ matrices over a field $\F$, which was originally stated in~\cite{Paz}. Despite the fact that $M_n(\F)$ is well understood, this problem still remains open, though some bounds were obtained by several authors, see, e.g.,~\cite{Markova, Shitov}. One of the reasons for such a great interest in the length of $M_n(\F)$ is that it is closely related to the problem of classification of systems of complex inner product spaces and linear mappings between them, see \cite{Futorny}, which can be reduced to the problem of classifying complex matrices up to unitary similarity. By Specht's criterion, $A,B\in M_n({\mathbb C}) $ are unitarily similar if and only if $\tr(w(A,A^*))=\tr(w(B,B^*))$ for any word $w(x,y)$ in non-commuting variables $x,y$, see~\cite{Specht, Pearcy}. This criterion requires infinitely many tests but their number can be reduced by using an upper bound on the length of $M_n(\C)$, see~\cite{Futorny} for the details. It is the one of many various research directions where the length function can be applied.

Then the study of length function was extended to general associative algebras, and in~\cite{Pappacena} the upper bound for the length of an arbitrary finite-dimensional unital associative algebra was obtained. The case of associative algebras can be considered relatively simple in the following sense. If $\A$ is associative, then $\Lin_{k}(S) = \Lin_{k+1}(S)$ implies $\Lin_{k}(S)=\Lin_{\infty}(S)$. Therefore, for a unital algebra $\A$ we have $l(\A) \le \dim \A -1$, and for a non-unital algebra $\A$ we have $l(\A) \le \dim \A$. However, these inequalities do not hold for non-associative algebras, see~\cite[Example~2.8]{Guterman_upper-bounds}.

The notion of length was generalized to unital non-associative algebras in~\cite{Guterman_upper-bounds, Guterman_sequences} where the method of characteristic sequences was introduced. This method allows to prove a sharp upper bound for the lengths of unital non-associative algebras, cf.~\cite[Section~4]{Guterman_upper-bounds}, locally complex algebras, cf.~\cite[Section~6]{Guterman_upper-bounds}, and quadratic algebras, cf.~\cite[Section~3]{Guterman_quadratic}. We, however, prefer to use another extremely helpful tool for length computation. Below we introduce the sequence of differences for an arbitrary system $S$ which describes how fast the dimension of the linear span of words of length at most $k$ in $S$ is growing.

\begin{definition} \label{definition:sequence-differences}
Given a subset $S \subseteq \A$, the sequence of differences between the dimensions of the linear spans of words in $S$ is $D(S) = \{ d_k(S) \}_{k = 0}^{\infty} = \{ d_k \}_{k = 0}^{\infty}$, where
\begin{align*}
d_0 &{}= \dim \Lin_0(S) =
\begin{cases}
0, & \A \text{ is non-unital},\\
1, & \A \text{ is unital};
\end{cases}\\
d_k &{}= \dim \Lin_k(S) - \dim \Lin_{k-1}(S), \quad k \in \N.
\end{align*}
\end{definition}

Clearly, the definition of $d_0$ does not depend on a set $S$, and its value is a function of an algebra $\A$ itself. Hence we will assume throughout the paper that
$$
d_0 =
\begin{cases}
0, & \A \text{ is non-unital},\\
1, & \A \text{ is unital}.
\end{cases}
$$

The following proposition shows that all statements concerning lengths of algebras and their generating sets can be reformulated in terms of the sequence of differences. We denote the number of elements in $S$ by $|S|$, and the rank of $S$ as a linear system by $\rank(S)$.

\begin{proposition} \label{proposition:differences-length}
Let $\A$ be an arbitrary finite-dimensional algebra over a field $\F$, $S \subseteq \A$. Then
\begin{enumerate}[{\rm (1)}]
    \item $l(S) = \max \{ k \in \N_0 \; | \; d_k \neq 0 \}$, where we assume that $\max \varnothing = 0$;
    \item $S$ is generating for $\A$ if and only if $\sum \limits_{k = 0}^{\infty} d_k = \dim \A$;
    \item $d_1 =
    \begin{cases}
    \rank(S), & \A \text{ is non-unital},\\
    \rank(S \cup \{ e \}) - 1, & \A \text{ is unital}.
    \end{cases}$
\end{enumerate}
\end{proposition}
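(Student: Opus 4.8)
The plan is to verify each of the three items directly from the definitions of $D(S)$, $l(S)$, and the chain $\Lin_0(S) \subseteq \Lin_1(S) \subseteq \cdots$. The underlying observation, used repeatedly, is that since $\A$ is finite-dimensional and the $\Lin_k(S)$ form a nondecreasing chain of subspaces, the dimensions $\dim \Lin_k(S)$ are nondecreasing and eventually constant, equal to $\dim \Lin_\infty(S)$; hence $d_k \geq 0$ for all $k$, and $d_k = 0$ for all sufficiently large $k$, so the relevant maxima and sums are well defined.

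For item (1), first I would note that $\Lin_k(S) = \Lin_\infty(S)$ is equivalent to $\dim \Lin_k(S) = \dim \Lin_\infty(S)$, because one space is contained in the other. Since the dimension sequence is nondecreasing and $\dim \Lin_\infty(S) = \dim \Lin_k(S) + \sum_{j > k} d_j$, we get $\Lin_k(S) = \Lin_\infty(S)$ if and only if $d_j = 0$ for all $j > k$. Therefore $l(S) = \min\{ k \mid d_j = 0 \text{ for all } j > k \}$, which is exactly $\max\{ k \in \N_0 \mid d_k \neq 0\}$, with the convention $\max\varnothing = 0$ covering the degenerate case where all $d_k$ vanish (which happens precisely when $\A$ is non-unital and $\Lin_\infty(S) = \{0\}$). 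For item (2), I would simply telescope: $\sum_{k=0}^{\infty} d_k = \lim_{k\to\infty}\bigl(\dim\Lin_k(S)\bigr) = \dim \Lin_\infty(S)$ (the sum being finite by the eventual-vanishing remark), and by the criterion recalled in the excerpt, $S$ is generating if and only if $\Lin_\infty(S) = \A$, i.e. $\dim \Lin_\infty(S) = \dim \A$.

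For item (3), I would split into the two cases. In the non-unital case, $\Lin_0(S) = \{0\}$ and $\Lin_1(S) = \Lin(S^1) = \Lin(S)$, the linear span of the words of length exactly one, i.e. of $S$ itself; hence $d_1 = \dim \Lin_1(S) - 0 = \rank(S)$. In the unital case, $\Lin_0(S) = \F e$ and $\Lin_1(S)$ is the span of $e$ together with all words of length one in $S$, i.e. $\Lin_1(S) = \Lin(S \cup \{e\})$; thus $d_1 = \dim \Lin(S \cup \{e\}) - 1 = \rank(S \cup \{e\}) - 1$. A small point to be careful about here is the convention for words of length one: a single letter $s \in S$ is itself a word of length one (no product is performed), so $S^1 = S \cup \{e\}$ in the unital case and $S^1 = S$ in the non-unital case — this matches $d_0$ as fixed in the paper.

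There is no serious obstacle; the only thing requiring a moment of care is the bookkeeping around the two conventions (unital versus non-unital, and what counts as a word of length $0$ and $1$), together with making sure the $\max\varnothing = 0$ convention in (1) is consistent with $l(S) = 0$ meaning $\Lin_0(S) = \Lin_\infty(S)$. Everything else is a telescoping-sum argument applied to a nondecreasing, eventually stationary sequence of subspace dimensions.
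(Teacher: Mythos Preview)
Your proposal is correct and follows essentially the same route as the paper's proof: both arguments reduce (1) to the observation that $l(S)$ is the last index where the dimension jumps, prove (2) by telescoping the sum of the $d_k$ to $\dim\Lin_\infty(S)$, and handle (3) by computing $\dim\Lin_1(S)$ separately in the unital and non-unital cases. The only cosmetic difference is that for (1) the paper first disposes of the degenerate situation (non-unital $\A$ with $S$ containing no nonzero elements) explicitly and then argues directly that $d_{l(S)}\ge 1$ while $d_k=0$ for $k>l(S)$, whereas you phrase the same thing via the equivalence $\Lin_k(S)=\Lin_\infty(S)\Leftrightarrow d_j=0$ for all $j>k$.
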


\begin{proof}
\leavevmode
\begin{enumerate}[{\rm (1)}]
    \item If $\A$ is non-unital and $S$ contains no nonzero elements, then $\Lin_{\infty}(S) = \Lin_0(S) = \{ 0 \}$ and $d_k = 0$ for all $k \in \mathbb{N}_0$, so $l(S) = 0 = \max \varnothing = \max \{ k \in \N_0 \; | \; d_k \neq 0 \}$. Otherwise, let us denote $m = l(S) \in \mathbb{N}_0$. Then $\{ 0 \} \neq \Lin_{\infty}(S) = \Lin_{m}(S)$, and $m$ is the smallest number with this property. Thus $d_{m} \geq 1$ and $d_k = 0$ for all $k > m$, so $m = \max \{ k \in \N_0 \; | \; d_k \neq 0 \}$.
    \item We have $\dim \Lin_{\infty}(S) = \dim \Lin_{m}(S) = \sum \limits_{k = 0}^{m} d_k = \sum \limits_{k = 0}^{\infty} d_k$, so $S$ is generating for $\A$ if and only if $\dim \A = \dim \Lin_{\infty}(S) = \sum \limits_{k = 0}^{\infty} d_k$.
    \item Note that $$
    \dim \Lin_1(S) = 
    \begin{cases}
    \rank(S), & \A \text{ is non-unital},\\
    \rank(S \cup \{ e \}), & \A \text{ is unital}.
    \end{cases}
    $$
    It remains to use the formulae for $d_0$ and $d_1$ from Definition~\ref{definition:sequence-differences}. \qedhere
\end{enumerate}
\end{proof}

Various identities satisfied by algebras can be very helpful in the study of their length functions. To generalize some of them, mixing and sliding algebras were introduced in~\cite{Guterman_slowly-growing}. It was shown that several important classes of algebras, including Leibniz, Novikov, and Zinbiel algebras, happen to be either mixing, or sliding, or both. Mixing and sliding algebras have slowly growing length, that is, their length does not exceed their dimension, see~\cite[Theorem~3.7]{Guterman_slowly-growing}.

The aim of our work is to study the lengths of several particular classes of composition algebras, such as standard composition algebras and Okubo algebras. This is an important step to the problem of length computation for more general classes of algebras, since, by~\cite[Theorem~2.9]{Elduque5},~\cite[Theorems~4.2 and~4.3]{ElduqueMyung1}, and~\cite[Theorem~3.2]{ElduqueMyung3},  these results would give us a complete description of lengths of symmetric composition algebras and finite-dimensional flexible composition algebras over an arbitrary field~$\F$.
In our previous paper~\cite{our_standard-composition-algebras} we have computed the lengths of standard composition algebras over an arbitrary field $\F$ with $\chrs \F \neq 2$. To do so, we have introduced the condition of descending flexibility which is satisfied by standard composition algebras, cf.~\cite[Lemma~4.5]{our_standard-composition-algebras}, and it has played a crucial role in our method of length computation. In the subsequent paper~\cite{GZh_Okubo} we will extend these results to standard composition algebras over~$\F$ with $\chrs \F = 2$ and compute the lengths of Okubo algebras over an arbitrary field~$\F$. Thus the problem of length computation for standard composition algebras and Okubo algebras will be solved completely. We will rely vastly on the main results of the current paper, namely, on the upper bounds for the lengths of descendingly flexible and descendingly alternative algebras.

In this paper we continue the study of the lengths of non-associative algebras. We introduce a new class of descendingly alternative algebras and re-introduce descendingly flexible algebras. Their definition is slightly altered with respect to the one which was given in~\cite{our_standard-composition-algebras}, so the class of descendingly flexible algebras becomes wider than it was before: by Proposition~\ref{proposition:sufficient-condition}, the algebras which satisfy~\cite[Definition~3.1]{our_standard-composition-algebras} are also descendingly flexible in the new sense. Since all the results obtained in~\cite[Section~3]{our_standard-composition-algebras} remain valid for the new definition of descendingly flexible algebras, we suggest that the modified definition is the one which should be used further. We establish several important properties of such algebras and show that the lengths of descendingly flexible and descendingly alternative algebras have at most logarithmic growth with respect to their dimensions.

Our paper is organized as follows. In Section~\ref{section:mixing-and-sliding} we recall the definitions of mixing and sliding algebras which were first given in~\cite{Guterman_slowly-growing}. By Lemma~\ref{lemma:general-recursive-length}, in these algebras it is sufficient to consider only those new words of length $l+1$ which can be obtained by multiplying a word of length $l$ by a single letter of a generating set, either on the left or on the right. This is a generalization of~\cite[Lemma~3.5]{our_standard-composition-algebras}, where this result was obtained for descendingly flexible algebras only.

In Section~\ref{section:almost-flexible-algebras} we define descendingly flexible and descendingly alternative algebras which are special cases of mixing algebras. We give some of their examples in Subsection~\ref{subsection:examples}. We show that descendingly flexible algebras are not necessarily flexible, descendingly alternative algebras may not be alternative, and the converse implications are not valid either. Besides, Example~\ref{example:classes-not-contained} demonstrates that the classes of descendingly flexible and descendingly alternative algebras are not contained in each other. In Example~\ref{example:lower-bound} we construct a $2^n$-dimensional algebra over an arbitrary field $\F$ with $\chrs \F = 2$, such that it is both descendingly flexible and descendingly alternative, and its length equals $n$. In Subsection~\ref{subseqtion:swappablity} we introduce the relation of swappability for the letters of a given word and show that it is symmetric and transitive.

In Sections~\ref{section:sequences-alternative} and~\ref{section:sequences-flexible} we obtain upper bounds on the lengths of descendingly alternative and descendingly flexible algebras, correspondingly. Their proofs follow the same line, but the case of descendingly flexible algebras is more technical. In Lemma~\ref{lemma:descendingly-alternative-classes} (Lemma~\ref{lemma:descendingly-flexible-bound}) we show that every new word in a descendingly alternative (descendingly flexible) algebra can be represented as a product of two (three) canonical factors. It then follows that all its letters are divided into at most two (three) classes of pairwise swappable letters. Then we show that, given an arbitrary set $S$, the sum of its sequence of differences depends at least exponentially on its length. Our main results are Theorems~\ref{theorem:descendingly-alternative-length} and~\ref{theorem:descendingly-flexible-length}. Their weaker but more convenient restatements are Corollaries~\ref{corollary:descendingly-alternative-length} and~\ref{corollary:descendingly-flexible-length} which show that the length of a descendingly alternative algebra does not exceed $\lceil \log_2(\dim \A - d_0) \rceil$, and the length of a descendingly flexible algebra of sufficiently large dimension is not greater than $\lceil \log_2(\dim \A - d_0) + \log_2(8/3) \rceil$.

\section{Irreducible words in mixing and sliding algebras} \label{section:mixing-and-sliding}

In this section we recall the definitions of mixing and sliding algebras which were introduced in~\cite{Guterman_slowly-growing}.

\begin{notation}
Given $x,y,z \in \A$, we denote
\begin{align*}
    Q_l(x,y,z) &{}= \{ x(zy), x(yz), y(xz), y(zx), xy, yx, xz, zx, yz, zy, x, y, z \},\\
    Q_r(x,y,z) &{}= \{ (xz)y, (zx)y, (yz)x, (zy)x, xy, yx, xz, zx, yz, zy, x, y, z \},\\
    P(x,y,z) &{}= Q_l(x,y,z) \cup Q_r(x,y,z).
\end{align*}
In other words, $Q_l(x,y,z)$ (or $Q_r(x,y,z)$) contains all monomials in $x, y, z$ of degree at most two whose letters are pairwise distinct, and only those monomials of degree three where the second (or the first) factor is $2$-fold and contains $z$.
\end{notation}

\begin{definition}
\leavevmode
\begin{itemize}
    \item An algebra $\A$ is called {\em left sliding} if $(xy)z \in \Lin_1(Q_l(x,y,z))$ for all $x,y,z \in \A$.
    \item An algebra $\A$ is called {\em right sliding} if $z(xy) \in \Lin_1(Q_r(x,y,z))$ for all $x,y,z \in \A$.
    \item An algebra $\A$ is called {\em sliding} if it is either left sliding or right sliding.
    \item An algebra $\A$ is called {\em mixing} if $(xy)z, z(xy) \in \Lin_1(P(x,y,z))$ for all $x,y,z \in \A$.
\end{itemize}
\end{definition}

\begin{remark}
Associative algebras are both mixing and left and right sliding.
\end{remark}

\begin{definition}
A class of algebras has {\em slowly growing length} if for any representative $\A$ of this class it holds that $l(\A) \leq \dim \A$.
\end{definition}

The next lemma shows that mixing and sliding algebras have the following property which holds trivially for associative algebras, namely, if the linear spans of words of two consequent lengths are equal, then the whole chain stabilizes afterwards.

\begin{lemma}[{\cite[Lemmas~3.5 and~3.6]{Guterman_slowly-growing}}] \label{lemma:full-stabilization}
Let $\A$ be mixing or sliding, and $S \subseteq \A$. If $\Lin_m(S) = \Lin_{m+1}(S)$ for some $m \ge 0$, then $\Lin_m(S) = \Lin_{m+1}(S) = \Lin_{m+2}(S) = \dots$.
\end{lemma}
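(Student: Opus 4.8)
The plan is to prove the (apparently stronger) statement that $\Lin_n(S) = \Lin_m(S)$ for every $n \ge m$, by induction on $n$. The cases $n = m$ and $n = m+1$ are trivial, the latter being precisely the hypothesis. So fix $n \ge m+2$ and assume $\Lin_{n-1}(S) = \Lin_m(S)$. Since $S^n = S^{n-1} \cup \{\, w : w \text{ is a word of length } n \,\}$, we have $\Lin_n(S) = \Lin_{n-1}(S) + \Lin_1(\{\, w : w \text{ is a word of length } n\,\})$, so it suffices to show that every word $w$ of length exactly $n$ lies in $\Lin_{n-1}(S)$, which by the inductive hypothesis equals $\Lin_m(S)$.

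Write the (unique) top-level factorization $w = w_1 w_2$, $|w_1| + |w_2| = n$, $|w_1|, |w_2| \ge 1$. If $|w_1| = 1$, say $w_1 = \ell \in S$, then $w_2$ is a word of length $n - 1$, hence $w_2 \in \Lin_{n-1}(S) = \Lin_m(S)$; writing $w_2 = \sum_i c_i u_i$ with each $u_i$ a word of length at most $m$, bilinearity of multiplication gives $w = \sum_i c_i (\ell u_i)$, and each $\ell u_i$ is a word of length at most $m+1 \le n - 1$, so $w \in \Lin_{n-1}(S)$. The case $|w_2| = 1$ is symmetric. Note this step uses neither mixing nor sliding, and it also disposes of the small values $m \in \{0,1\}$ without any separate treatment.

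It remains to treat words $w = w_1 w_2$ with $|w_1|, |w_2| \ge 2$, and here I run an inner induction on a complexity measure adapted to the property of $\A$: take the measure to be $|w_1|$ if $\A$ is left sliding, $|w_2|$ if $\A$ is right sliding, and $\min(|w_1|, |w_2|)$ if $\A$ is mixing. The base value $1$ of the measure was just handled. For the inductive step: if $\A$ is left sliding, write $w_1 = xy$ (its own top-level factorization, so $|x|, |y| \le |w_1| - 1$), put $z = w_2$, and apply $(xy)z \in \Lin_1(Q_l(x,y,z))$; the degree-$\le 2$ monomials in $Q_l(x,y,z)$ are words of length at most $n - 1$, hence lie in $\Lin_{n-1}(S)$, while the four degree-three monomials $x(zy), x(yz), y(xz), y(zx)$ are words of length $n$ whose top-level left factor ($x$ or $y$) has length at most $|w_1| - 1$, so they lie in $\Lin_{n-1}(S)$ by the inner inductive hypothesis; thus $w \in \Lin_{n-1}(S)$. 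The right-sliding case is the mirror image: split $w_2 = uv$ and apply $z(xy) \in \Lin_1(Q_r(x,y,z))$ with $x = u$, $y = v$, $z = w_1$, where now the four degree-three terms have a top-level right factor of length at most $|w_2| - 1$. For the mixing case, assume without loss of generality $|w_1| \le |w_2|$, so the measure is $|w_1| \le \lfloor n/2 \rfloor$; write $w_1 = xy$, put $z = w_2$, and apply $(xy)z \in \Lin_1(P(x,y,z))$ with $P = Q_l \cup Q_r$: again every degree-$\le 2$ monomial is a word of length at most $n - 1$; each of the eight degree-three monomials is a word of length $n$ whose top-level factorization has one factor equal to a single one of $x, y$ (of length at most $|w_1| - 1 < n/2$) and the other of complementary length exceeding $n/2$, so its $\min$-measure equals that short length, which is at most $|w_1| - 1$, and the inner inductive hypothesis applies. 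This closes the inner induction, hence the outer one, and $\Lin_n(S) = \Lin_{n-1}(S) = \Lin_m(S)$ for all $n \ge m$, as claimed.

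The one genuinely delicate point is the choice of measure in the mixing case. Because $(xy)z$ is only controlled by the full set $P(x,y,z)$, not by $Q_l(x,y,z)$ alone, the rewriting produces $Q_r$-type terms such as $(xz)y$, whose top-level \emph{left} factor $xz$ is long; splitting the \emph{shorter} of $w_1, w_2$ is exactly what guarantees that the complementary new factor has length greater than $n/2$, so that the minimum of the two top-level factor lengths is still governed by the short piece and strictly decreases. The sliding cases are easier because only one of $Q_l$, $Q_r$ intervenes and the factor length on the corresponding side drops directly; the mixing case is where I expect the bookkeeping to require the most care.
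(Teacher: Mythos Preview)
Your proof is correct and uses essentially the same inductive machinery as the paper: the paper establishes the stronger structural result Lemma~\ref{lemma:general-recursive-length} (that $\Lin_{m+1}(S) = \Lin_m(S)\cdot S + S\cdot\Lin_m(S) + \Lin_m(S)$ in the mixing case, and the one-sided analogues for sliding) via exactly the same complexity measures you chose --- $s(w)=\min(l(w_1),l(w_2))$ for mixing and $s_l(w)=l(w_1)$ for left sliding --- and then derives Lemma~\ref{lemma:full-stabilization} in one line in the Remark that follows. Your direct double induction is just this argument with Lemma~\ref{lemma:general-recursive-length} inlined rather than factored out.
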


\begin{corollary}[{\cite[Theorem~3.7]{Guterman_slowly-growing}}]
Mixing and sliding algebras have slowly growing length.
\end{corollary}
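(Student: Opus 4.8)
The plan is to deduce the bound $l(\A)\le\dim\A$ directly from Lemma~\ref{lemma:full-stabilization} together with the bookkeeping supplied by Proposition~\ref{proposition:differences-length}. Fix a mixing or sliding algebra $\A$ and an arbitrary generating set $S\subseteq\A$; by Definition~\ref{definition:algebra-length} it suffices to show $l(S)\le\dim\A$. If $\A$ is non-unital and $S$ contains no nonzero element, then $l(S)=0$ and the inequality is trivial, so assume otherwise.

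First I would record the key structural consequence of Lemma~\ref{lemma:full-stabilization}: the chain $\Lin_0(S)\subseteq\Lin_1(S)\subseteq\Lin_2(S)\subseteq\cdots$ grows strictly in dimension until it stabilizes. Indeed, if $\dim\Lin_m(S)=\dim\Lin_{m-1}(S)$ for some $m\ge 1$, then $\Lin_{m-1}(S)=\Lin_m(S)$, and the lemma gives $\Lin_{m-1}(S)=\Lin_k(S)$ for all $k\ge m-1$; hence $d_k=0$ for every $k\ge m$ and, by Proposition~\ref{proposition:differences-length}(1), $l(S)\le m-1$. Reformulated through Definition~\ref{definition:sequence-differences}, this says precisely that $d_k\ge 1$ for every $k$ with $1\le k\le l(S)$: the sequence of differences $D(S)$ has no internal zeros up to its last nonzero term. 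This is exactly the phenomenon that may fail for a general non-associative algebra (cf.~\cite[Example~2.8]{Guterman_upper-bounds}) but which Lemma~\ref{lemma:full-stabilization} rules out here, and it is the only nontrivial ingredient.

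Then the conclusion follows by a telescoping sum. Since $S$ generates $\A$, Proposition~\ref{proposition:differences-length}(2) gives
$$
\dim\A=\sum_{k=0}^{\infty}d_k=\sum_{k=0}^{l(S)}d_k=d_0+\sum_{k=1}^{l(S)}d_k\ge d_0+l(S)\ge l(S),
$$
where the second equality uses $d_k=0$ for $k>l(S)$ and the first inequality uses $d_k\ge1$ for $1\le k\le l(S)$. As $S$ was an arbitrary generating set, $l(\A)=\max_S l(S)\le\dim\A$, which proves that mixing and sliding algebras have slowly growing length. I expect no real obstacle in this argument — all the substance is already carried by Lemma~\ref{lemma:full-stabilization} — and I note in passing that the same estimate actually yields the slightly sharper $l(\A)\le\dim\A-d_0$, i.e.\ $l(\A)\le\dim\A-1$ when $\A$ is unital.
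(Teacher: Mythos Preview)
Your proof is correct and follows exactly the route the paper implicitly intends: the corollary is placed immediately after Lemma~\ref{lemma:full-stabilization} (and is cited from~\cite{Guterman_slowly-growing} without an independent proof here), and your argument is precisely the standard deduction---stabilization forces $d_k\ge 1$ for $1\le k\le l(S)$, then the telescoping sum $\dim\A=\sum_k d_k\ge l(S)$ finishes. There is nothing to add; the sharper bound $l(\A)\le\dim\A-d_0$ you note in passing is also correct.
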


In terms of the sequence of differences, Lemma~\ref{lemma:full-stabilization} can be restated as follows.

\begin{proposition} \label{proposition:full-stabilization}
Let $\A$ be mixing or sliding, and $S \subseteq \A$. If $d_m = 0$ for some $m \in \N$, then $d_k = 0$ for all $k \geq m$.
\end{proposition}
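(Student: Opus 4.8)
The plan is to translate the statement of Proposition~\ref{proposition:full-stabilization} back into the language of linear spans, apply Lemma~\ref{lemma:full-stabilization}, and translate the conclusion back. This is essentially a dictionary argument between the ``sequence of differences'' formulation and the ``chain of linear spans'' formulation, so no new mathematical content is required; the only care needed is in handling the indexing and the $d_0$ convention.

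First I would observe that for $m \in \N$ the condition $d_m = 0$ is, by Definition~\ref{definition:sequence-differences}, exactly the statement $\dim \Lin_m(S) - \dim \Lin_{m-1}(S) = 0$, i.e. $\dim \Lin_{m-1}(S) = \dim \Lin_m(S)$. Since $\Lin_{m-1}(S) \subseteq \Lin_m(S)$ always holds and both are finite-dimensional, equality of dimensions forces $\Lin_{m-1}(S) = \Lin_m(S)$. Now I would invoke Lemma~\ref{lemma:full-stabilization} with its parameter equal to $m-1$ (which is a legitimate choice since $m - 1 \geq 0$ because $m \in \N$): it yields $\Lin_{m-1}(S) = \Lin_m(S) = \Lin_{m+1}(S) = \dots$, that is, $\Lin_k(S) = \Lin_{m-1}(S)$ for every $k \geq m-1$.

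Finally I would read off the conclusion: for every $k \geq m$ we have $\dim \Lin_k(S) = \dim \Lin_{k-1}(S) = \dim \Lin_{m-1}(S)$, hence $d_k = \dim \Lin_k(S) - \dim \Lin_{k-1}(S) = 0$. This gives $d_k = 0$ for all $k \geq m$, as required. (One could also phrase the last step via Proposition~\ref{proposition:differences-length}(1): stabilization of the chain at level $m-1$ means $l(S) \leq m-1 < m$, so $d_k = 0$ for all $k \geq m$.)

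There is essentially no obstacle here, since Lemma~\ref{lemma:full-stabilization} does all the work; the only thing to be attentive about is the off-by-one shift between $d_m = 0$ and the hypothesis $\Lin_m = \Lin_{m+1}$ of the lemma — one must apply the lemma at index $m-1$, not $m$, and check that $m \in \N$ guarantees $m - 1 \geq 0$ so that the lemma is applicable. I would also remark that the same argument works verbatim whether $\A$ is mixing or sliding, since Lemma~\ref{lemma:full-stabilization} covers both cases simultaneously.
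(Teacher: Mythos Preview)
Your argument is correct and matches the paper's approach exactly: the paper introduces Proposition~\ref{proposition:full-stabilization} simply as the restatement of Lemma~\ref{lemma:full-stabilization} in terms of the sequence of differences and gives no separate proof, so your dictionary translation (including the careful off-by-one application of the lemma at index $m-1$) is precisely what is intended.
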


We now use another approach and prove a stronger result than Lemma~\ref{lemma:full-stabilization}. In particular, the following lemma states that in mixing and sliding algebras it is sufficient to consider only those new words of length $l+1$ which can be obtained by multiplying a word of length~$l$ by a single letter of a generating set, either on the left or on the right.

\begin{lemma} \label{lemma:general-recursive-length}
Let $S$ be a subset of $\A$, $m \in \N$.
\begin{enumerate}[{\rm (1)}]
    \item If $\A$ is mixing, then
    $
    \Lin_{m+1}(S) = \Lin_m(S) \cdot S + S \cdot \Lin_m(S) + \Lin_m(S).
    $
    \item If $\A$ is left sliding, then
    $
    \Lin_{m+1}(S) = S \cdot \Lin_m(S) + \Lin_m(S).
    $
    \item If $\A$ is right sliding, then
    $
    \Lin_{m+1}(S) = \Lin_m(S) \cdot S + \Lin_m(S).
    $
\end{enumerate}
\end{lemma}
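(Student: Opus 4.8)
The plan is to prove each of the three identities by double inclusion, with the nontrivial direction being ``$\subseteq$'' in each case; the reverse inclusion is immediate since $\Lin_m(S) \cdot S$, $S \cdot \Lin_m(S)$, and $\Lin_m(S)$ are all contained in $\Lin_{m+1}(S)$ (any word of length at most $m$ multiplied by a single letter has length at most $m+1$). For the forward inclusion it suffices to show that every word $w$ of length exactly $m+1$ in letters from $S$ lies in the claimed sum, since $\Lin_{m+1}(S)$ is spanned by $\Lin_m(S)$ together with such words. Write $w = uv$ where $u$ has length $p \ge 1$ and $v$ has length $q \ge 1$ with $p + q = m+1$; if $p = 1$ then $w \in S \cdot \Lin_m(S)$ and if $q = 1$ then $w \in \Lin_m(S) \cdot S$, so the remaining case is $p, q \ge 2$. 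The natural approach is induction on $\min(p,q)$ (equivalently on $m$), peeling off a single factor from the shorter side.

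First I would handle the mixing case (1). Since $p \ge 2$, write $u = xy$ with $x$ of length $p_1 \ge 1$ and $y$ of length $p_2 \ge 1$; then $w = (xy)v$ with $x, y, v$ words whose lengths sum to $m+1$. By the mixing hypothesis, $w = (xy)v \in \Lin_1(P(x,y,v))$, i.e. $w$ is a linear combination of the thirteen-or-so monomials in $P(x,y,v)$. Each such monomial is either a word of length $\le m$ (the degree-one and degree-two monomials, since each of $x,y,v$ has length $\le m-1$ and any product of two of them has length $\le m$... one must check this: $\operatorname{len}(x) + \operatorname{len}(y) = p \le m-1$, and similarly $xv, yv$ have length $\le m$ because $\operatorname{len}(v) = q$ and $\operatorname{len}(x) + \operatorname{len}(v) \le p_1 + q \le p - 1 + q = m$), hence lies in $\Lin_m(S)$; or it is a degree-three monomial of the form $\alpha(\beta\gamma)$ or $(\beta\gamma)\alpha$ where $\{\alpha,\beta,\gamma\} = \{x,y,v\}$ and the $2$-fold factor contains $v$. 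In the latter situation the $2$-fold factor $\beta\gamma$ (with $\{\beta,\gamma\}$ being $v$ and one of $x,y$) has length $\operatorname{len}(v) + \operatorname{len}(x)$ or $\operatorname{len}(v) + \operatorname{len}(y)$, which is $\le q + p_1$ or $\le q + p_2$, and in either case $\le m$ while being $\ge 2$; the remaining single factor $\alpha$ is the other one of $x, y$, of length $\le p - 1 \le m-1$. Thus the degree-three monomial is a product of two words each of length $\le m$, and it is either of the form (word of length $\le m$)$\,\cdot\,$(single... no): we need it to land in $\Lin_m(S)\cdot S + S\cdot \Lin_m(S) + \Lin_m(S)$, which it does not obviously do yet, since both factors may have length between $2$ and $m$. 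Here is where the induction enters: the degree-three monomial has the form $u'v'$ with $\operatorname{len}(u') + \operatorname{len}(v') \le m+1$ and strictly smaller ``shorter side'' than $w$ had (because we have moved the length-$q$ piece $v$ into a factor that is now paired differently), so by the inductive hypothesis it already lies in $\Lin_m(S) \cdot S + S \cdot \Lin_m(S) + \Lin_m(S)$. Actually the cleanest bookkeeping is to induct on $m$ and, within fixed $m$, argue that any length-$(m{+}1)$ word $w = uv$ with $\min(\operatorname{len} u, \operatorname{len} v) \ge 2$ is rewritten by the mixing identity into a linear combination of: words of length $\le m$ (done), and length-$(m{+}1)$ words of the form (length $1$)$\cdot$(length $m$) or (length $m$)$\cdot$(length $1$) or (length $2$)$\cdot$(length $m{-}1$) or (length $m{-}1$)$\cdot$(length $2$) — and these last two are handled by applying the mixing identity once more to the length-$2$ factor, now landing in genuinely shorter products. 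I would formalize this as an induction on $\min(\operatorname{len} u, \operatorname{len} v)$: the base case $\min = 1$ is trivial, and the mixing identity strictly decreases this quantity (or else produces shorter words outright).

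For the left sliding case (2), the argument is the analogous but easier one: write $w = uv = (xy)v$ with $\operatorname{len}(xy) = p \ge 2$, so $x, y$ both have length $\ge 1$. By left sliding, $(xy)v \in \Lin_1(Q_l(x,y,v))$, whose members are: the degree-$\le 2$ monomials, all of length $\le m$ as before; and the degree-three monomials $x(vy), x(yv), y(xz)$-type, i.e. of the form $\alpha(\beta\gamma)$ — a \emph{left}-normed... rather, with the whole thing being (single factor)$\,\cdot\,$(a $2$-fold product), so it is already of the shape $S' \cdot \Lin_{?}$ where the single factor has length $\operatorname{len}(x)$ or $\operatorname{len}(y)$, which need not be $1$. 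So again induction is needed, but now on a simpler recursion: by the inductive hypothesis applied to the length-$(\le m)$ word $\alpha$... no — rather, I induct on $m$ and rewrite: $w = (xy)v$, and if $\operatorname{len}(x) = 1$ we are done for the monomials $x(vy), x(yv)$; otherwise reapply left sliding to $x = x'x''$ inside $x(\cdots)$... this does not immediately terminate. The correct clean statement for left sliding is: induct on $\operatorname{len}(u)$ where $w = uv$ with $u$ the left factor obtained from \emph{some} bracketing of $w$ as (length $\ge 1$)(length $\ge 1$); since $\A$ is left sliding, any such $w$ equals a combination of shorter words and words $x(\text{product of } y,v)$ with $\operatorname{len}(x) < \operatorname{len}(u)$, so induction on $\operatorname{len}(u)$ drives it down to $\operatorname{len}(u) = 1$, giving $w \in S \cdot \Lin_m(S) + \Lin_m(S)$. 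Case (3), right sliding, is the mirror image of (2) with left and right interchanged, using $z(xy) \in \Lin_1(Q_r(x,y,z))$ and inducting on the length of the right factor.

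The main obstacle I anticipate is purely bookkeeping: keeping precise track of the lengths of all the monomials appearing in $P(x,y,z)$, $Q_l(x,y,z)$, $Q_r(x,y,z)$ when $x, y, z$ are themselves words (not letters) of prescribed lengths, and verifying that in every case the mixing/sliding rewrite strictly decreases the chosen induction parameter (the length of the shorter, resp. left, resp. right, factor) without ever increasing total length beyond $m+1$. The degree-three monomials in the mixing case are the delicate ones, since there one must confirm both that the $2$-fold factor has length $\le m$ (so the rewrite stays within $\Lin_{m+1}(S)$) and that the configuration is strictly simpler than the original $w = (xy)v$. Once the length arithmetic is pinned down, the induction closes routinely; there is no conceptual difficulty beyond what is already packaged in the mixing and sliding axioms.
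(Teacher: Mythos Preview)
Your proposal is correct and follows essentially the same approach as the paper: for the mixing case you induct on $\min(\ell(u),\ell(v))$ where $w = uv$, and for the left (resp.\ right) sliding case you induct on the length of the left (resp.\ right) factor, exactly as the paper does. The one point you leave implicit but should make explicit in the mixing case is that one must decompose the \emph{shorter} factor (say $\ell(u) \le \ell(v)$, write $u = xy$); otherwise the claim that ``the mixing identity strictly decreases $\min(\ell(u),\ell(v))$'' can fail --- if $\ell(u) > \ell(v)$ and you split $u$, a resulting three-letter monomial like $x(vy)$ may have $\min(\ell(x), \ell(v)+\ell(y)) \ge \ell(v) = \min(\ell(u),\ell(v))$. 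Once you add that WLOG assumption (or symmetrically decompose $v$ and use the $z(xy)$ rule), the induction closes exactly as you anticipate.
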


\begin{proof}
If $m = 1$, then the statement is true by the definition. Assume now that $m \geq 2$.
\begin{enumerate}[{\rm (1)}]
    \item Consider an arbitrary word $w \in S^{m+1}$. Then $w = xy$ for some $x \in S^{l+1}$ and $y \in S^{m-l}$, $l \in \{ 0, \dots, m-1 \}$. We prove that 
    $$
    w \in X_{m+1} := \Lin_m(S) \cdot S + S \cdot \Lin_m(S) + \Lin_m(S)
    $$
    by induction on $s(w) = \min \{ l(x), l(y) \}$:
    \begin{itemize}
        \item If $s(w) = 1$, that is, $l(x) = 1$ or $l(y) = 1$, then either $x \in S$ or $y \in S$, so $w \in (S^{m} \cdot S) \cup (S \cdot S^{m}) \subseteq X_{m+1}$.
        \item Let the statement be proved for all values less that $s(w) \geq 2$. Assume without loss of generality that $l(x) \leq l(y)$. Then $x = uv$ for some $u \in S^{k+1}$ and $v \in S^{l-k}$, $k \in \{ 0, \dots, l-1 \}$. By the definition of a mixing algebra, we have $w = xy = (uv)y \in \Lin_1(P(u,v,y))$. Any one-letter or two-letter word from $P(u,v,y)$ readily belongs to $S^m \subseteq X_{m+1}$. Any three-letter word $z \in P(u,v,y)$ is a product of two factors, exactly one of which contains $y$ as a strictly smaller subword. Thus we have either $s(z) = \min \{ l(u), l(v) + l(y) \} = l(u) < l(x) = s(w)$ or $s(z) = \min \{ l(v), l(u) + l(y) \} = l(v) < l(x) = s(w)$. Hence, by the inductive hypothesis, we obtain $z \in X_{m+1}$. Therefore, $w \in X_{m+1}$.
    \end{itemize}
    \item Consider an arbitrary word $w \in S^{m+1}$. Then $w = xy$ for some $x \in S^{l+1}$ and $y \in S^{m-l}$, $l \in \{ 0, \dots, m-1 \}$. We prove that 
    $$
    w \in Y_{m+1} := S \cdot \Lin_m(S) + \Lin_m(S)
    $$
    by induction on $s_l(w) = l(x)$:
    \begin{itemize}
        \item If $s_l(w) = l(x) = 1$, then $x \in S$, so $w \in S \cdot S^{m} \subseteq Y_{m+1}$.
        \item Let the statement be proved for all values less that $s_l(w) \geq 2$. We have $x = uv$ for some $u \in S^{k+1}$ and $v \in S^{l-k}$, $k \in \{ 0, \dots, l-1 \}$. By the definition of a left sliding algebra, it holds that $w = xy = (uv)y \in \Lin_1(Q_l(u,v,y))$. Any one-letter or two-letter word from $Q_l(u,v,y)$ belongs to $S^m \subseteq Y_{m+1}$. For any three-letter word $z \in Q_l(u,v,y)$ we have either $s_l(z) = l(u) < l(x) = s_l(w)$ or $s_l(z) = l(v) < l(x) = s_l(w)$. Hence, by the inductive hypothesis, we obtain $z \in Y_{m+1}$. Therefore, $w \in Y_{m+1}$.
    \end{itemize}
    \item This case is symmetric to the previous one. \qedhere
\end{enumerate}
\end{proof}

\begin{remark}
Lemma~\ref{lemma:full-stabilization} which was proved in~\cite{Guterman_slowly-growing} follows directly from Lemma~\ref{lemma:general-recursive-length}. Indeed, it is sufficient to show that $\Lin_m(S) = \Lin_{m+1}(S)$ implies $\Lin_{m+1}(S) = \Lin_{m+2}(S)$. This is clear if $m = 0$. If $m \geq 1$, then, by Lemma~\ref{lemma:general-recursive-length}, we have
\begin{align*}
\Lin_{m+2}(S) &{}= \Lin_{m+1}(S) \cdot S + S \cdot \Lin_{m+1}(S) + \Lin_{m+1}(S)\\
&{}= \Lin_m(S) \cdot S + S \cdot \Lin_m(S) + \Lin_m(S) = \Lin_{m+1}(S).
\end{align*}
Hence the statement follows.
\end{remark}

\begin{notation}
Let $a \in \A$. The mappings $L_a, R_a: \A \rightarrow \A$ are given by
\begin{align*}
L_a(x) &{}= ax,\\
R_a(x) &{}= xa
\end{align*}
for all $x \in \A$.
\end{notation}

\begin{notation} \label{notation:one-letter-at-a-time}
Let $S^{(m)} \subseteq S^m$ be the set constructed inductively by using equalities $S^{(1)} = S$ and $S^{(k+1)} = S^{(k)} \cdot S \cup S \cdot S^{(k)}$ for all $k \in \N$. In other words, 
$$
S^{(m)} = \{ X_1 X_2 \dots X_{m-1} s_m \; | \; X_i \in \{ L_{s_i}, R_{s_i} \} \text{ for } i = 1, \dots, m - 1, \text{ and } s_1, \dots, s_m \in S \}.
$$
\end{notation}

\begin{corollary} \label{corollary:new-elements}
Let $\A$ be mixing, $S \subseteq \A$. Then for any $m \in \N$ we have $\Lin_m(S) = \Lin_{m-1}(S) + \Lin(S^{(m)})$.
\end{corollary}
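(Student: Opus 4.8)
The plan is to argue by induction on $m$, the two ingredients being Lemma~\ref{lemma:general-recursive-length}(1) and the recursive definition $S^{(k+1)} = S^{(k)} \cdot S \cup S \cdot S^{(k)}$ from Notation~\ref{notation:one-letter-at-a-time}. The base case $m = 1$ does not use the mixing hypothesis at all: by definition $S^{(1)} = S$, and $S^1$ consists of the words of length at most one, that is, of the elements of $S$ together with the unity in the unital case, so $\Lin_1(S) = \Lin_0(S) + \Lin(S) = \Lin_0(S) + \Lin(S^{(1)})$ directly.

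For the inductive step, fix $m \geq 2$ and suppose $\Lin_{m-1}(S) = \Lin_{m-2}(S) + \Lin(S^{(m-1)})$. Applying Lemma~\ref{lemma:general-recursive-length}(1) to the mixing algebra $\A$ gives $\Lin_m(S) = \Lin_{m-1}(S) \cdot S + S \cdot \Lin_{m-1}(S) + \Lin_{m-1}(S)$. I would then substitute the inductive hypothesis into the first two summands, use bilinearity of multiplication in the forms $(\Lin(A) + \Lin(B)) \cdot C = \Lin(A \cdot C) + \Lin(B \cdot C)$ and symmetrically, and observe that $\Lin_{m-2}(S) \cdot S$ and $S \cdot \Lin_{m-2}(S)$ consist of words of length at most $m-1$ and hence can be absorbed into the trailing term $\Lin_{m-1}(S)$. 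This reduces the expression to $\Lin_m(S) = \Lin(S^{(m-1)} \cdot S) + \Lin(S \cdot S^{(m-1)}) + \Lin_{m-1}(S)$, and since $S^{(m)} = S^{(m-1)} \cdot S \cup S \cdot S^{(m-1)}$ the first two terms combine into $\Lin(S^{(m)})$, giving exactly $\Lin_m(S) = \Lin_{m-1}(S) + \Lin(S^{(m)})$.

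I expect no real difficulty here; the argument is a bookkeeping induction, and the inclusion $\Lin_{m-1}(S) + \Lin(S^{(m)}) \subseteq \Lin_m(S)$ is immediate because $S^{(m)} \subseteq S^m$. The only points that need a little attention are the elementary span identities $\Lin(A) \cdot B = \Lin(A \cdot B)$ and the distributive law above (both following from bilinearity of the product), the inclusion $\Lin_{m-2}(S) \cdot S + S \cdot \Lin_{m-2}(S) \subseteq \Lin_{m-1}(S)$ that licenses discarding the lower-degree contributions, and the bookkeeping fact that Lemma~\ref{lemma:general-recursive-length}(1) outputs $\Lin_k(S)$ only for $k \geq 2$, which is precisely why the case $m = 1$ is treated on its own.
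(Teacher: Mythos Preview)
Your proof is correct and matches the paper's approach: the paper simply records that the corollary follows immediately from Lemma~\ref{lemma:general-recursive-length}, and the induction you spell out (using the recursive definition $S^{(k+1)} = S^{(k)} \cdot S \cup S \cdot S^{(k)}$ together with Lemma~\ref{lemma:general-recursive-length}(1)) is precisely how one unpacks that reference.
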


\begin{proof}
Follows immediately from Lemma~\ref{lemma:general-recursive-length}.
\end{proof}

\section{Descendingly flexible and descendingly alternative algebras} \label{section:almost-flexible-algebras}

\subsection{Definition and sufficient conditions}

Some of the non-associative algebras satisfy certain restrictions that are close to the associativity of their elements, and this makes their study easier. Probably the most popular among such restrictions are flexibility and alternativity.

\begin{definition} \label{definition:flexible-alternative}
\leavevmode
\begin{itemize}
\item An algebra $\A$ is called {\em flexible} if $(ab)a = a(ba)$ for all $a,b \in \A$.  
\item An algebra $\A$ is called {\em alternative} if $a(ab) = (aa)b$ and $(ba)a = b(aa)$ for all $a,b \in \A$.  
\end{itemize}
\end{definition}

It is well known that any alternative algebra is also flexible, see~\cite[p.~154, Exercise~2.1.1]{McCrimmon}, but the converse is not true, e.g., Cayley--Dickson algebras of dimension at least $16$ are flexible but not alternative, see~\cite[p.~436]{Schafer}.

Inspired by Definition~\ref{definition:flexible-alternative}, we introduce the notions of descendingly flexible and descendingly alternative algebras which form two important subclasses in the class of mixing algebras.

Consider $a, b, c \in \A$. We denote by $\Lin'_2(a,b,c)$ the linear span of all words of length at most two in $a,b,c$, except for the words $aa$, $bb$ and $cc$, that is,
$$
\Lin'_2(a,b,c) =
\begin{cases}
\phantom{e,{}}\Lin(a,b,c,ab,ba,cb,bc,ac,ca), & \A \text{ is non-unital},\\
\Lin(e,a,b,c,ab,ba,cb,bc,ac,ca), & \A \text{ is unital}.
\end{cases}
$$
Note also that
$$
\Lin_1(a,b,aa,ab,ba) = 
\begin{cases}
\phantom{e,{}}\Lin(a,b,aa,ab,ba), & \A \text{ is non-unital},\\
\Lin(e,a,b,aa,ab,ba), & \A \text{ is unital}.
\end{cases}
$$

\begin{definition}
\leavevmode
\begin{itemize}
\item We say that an algebra $\A$~is {\em descendingly flexible} if for all $a, b, c \in \A$ it holds that
\begin{align}
    (ab)a, \;\; a(ba) &{}\in \Lin_1(a,b,aa,ab,ba), \label{equation:general-flexibility}\\
    (ab)c + (cb)a &{}\in \Lin'_2(a,b,c), \label{equation:general-flexibility-1}\\
    a(bc) + c(ba) &{}\in \Lin'_2(a,b,c). \label{equation:general-flexibility-2}
\end{align}

\item We say that an algebra $\A$~is {\em descendingly alternative} if for all $a, b, c \in \A$ it holds that
\begin{align}
    (ba)a, \;\; a(ab) &{}\in \Lin_1(a,b,aa,ab,ba), \label{equation:general-alternativity}\\
    (ab)c + (ac)b &{}\in \Lin'_2(a,b,c), \label{equation:general-alternativity-1}\\
    a(bc) + b(ac) &{}\in \Lin'_2(a,b,c). \label{equation:general-alternativity-2}
\end{align}
\end{itemize}
\end{definition}

Clearly, if $\chrs \F \neq 2$, then Eqs.~\eqref{equation:general-flexibility-1} and~\eqref{equation:general-flexibility-2} imply Eq.~\eqref{equation:general-flexibility}, and Eqs.~\eqref{equation:general-alternativity-1} and~\eqref{equation:general-alternativity-2} imply Eq.~\eqref{equation:general-alternativity}. Therefore, conditions~\eqref{equation:general-flexibility} and~\eqref{equation:general-alternativity} are needed only in the case when $\chrs \F = 2$.

The following proposition provides a convenient sufficient condition for an algebra to be descendingly flexible or descendingly alternative. It is a stronger form of Eqs.~\eqref{equation:general-flexibility} and~\eqref{equation:general-alternativity}.

\begin{proposition} \label{proposition:sufficient-condition}
\leavevmode
\begin{enumerate}[{\rm (1)}]
    \item Assume that for all $a, b \in \A$ we have $(ab)a, \, a(ba) \in \Lin_1(a,b,aa,ab,ba)$, and the coefficient at $aa$ depends only on $b$. In other words, $(ab)a$ and $a(ba)$ can be represented as
    \begin{equation} \label{equation:general-flexibility-3}
    f_1(a,b)a + f_2(a,b)b + g(b)aa + f_3(a,b)ab + f_4(a,b)ba + f_5(a,b)e
    \end{equation}
    for some functions $f_j: \A \times \A \to \F$, $j = 1, \dots, 5$, and $g: \A \to \F$. If $\A$ is non-unital, then we assume that $f_5(a,b) = 0$ for all $a,b \in \A$. Then $\A$ is descendingly flexible.
    \item If for all $a, b \in \A$ it holds that $(ba)a, \, a(ab) \in \Lin_1(a,b,aa,ab,ba)$, and the coefficient at $aa$ depends only on $b$, then $\A$ is descendingly alternative.
\end{enumerate}
\end{proposition}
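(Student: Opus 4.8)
The plan is to verify the three defining identities in each case. The first one --- Eq.~\eqref{equation:general-flexibility} in part~(1), Eq.~\eqref{equation:general-alternativity} in part~(2) --- is literally the hypothesis, so only the two ``sum'' identities need an argument, and for all of them the mechanism is the same: \emph{linearization in the repeated letter}.

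For part~(1), Eq.~\eqref{equation:general-flexibility-1}, I would write the hypothesis for arbitrary arguments as $(xy)x = f_1(x,y)x + f_2(x,y)y + g(y)\,xx + f_3(x,y)xy + f_4(x,y)yx + f_5(x,y)e$, with $f_5 \equiv 0$ if $\A$ is non-unital, and substitute $x = a+c$, $y = b$. Expanding the left-hand side gives $((a+c)b)(a+c) = (ab)a + (ab)c + (cb)a + (cb)c$, so subtracting the instances of the identity at $(a,b)$ and at $(c,b)$ expresses $(ab)c + (cb)a$ through the expansion of the right-hand side. The only thing to check is that the surviving degree-two monomials lie among $ab, ba, cb, bc, ac, ca$: the square $xx$ expands as $aa + ac + ca + cc$ with coefficient $g(b)$, and since the same scalar $g(b)$ is the coefficient at $aa$ in $(ab)a$ and at $cc$ in $(cb)c$, those two contributions cancel, leaving only $g(b)ac$ and $g(b)ca$, both admissible. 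Hence $(ab)c + (cb)a \in \Lin'_2(a,b,c)$. Eq.~\eqref{equation:general-flexibility-2} is identical, linearizing the analogous representation of $a(ba)$ in its repeated letter via $a \mapsto a+c$, which produces $(a+c)\bigl(b(a+c)\bigr) = a(ba) + a(bc) + c(ba) + c(bc)$.

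Part~(2) is the same idea with the repeated letter in a different slot. For Eq.~\eqref{equation:general-alternativity-1} I would linearize the hypothesis on $(ba)a$, written as $(xy)y = \phi_1(x,y)x + \phi_2(x,y)y + g(x)\,yy + \phi_3(x,y)xy + \phi_4(x,y)yx + \phi_5(x,y)e$, via $x = a$, $y = b+c$; now $yy$ produces $bb + bc + cb + cc$, the $bb$- and $cc$-terms cancel against the instances at $(a,b)$ and $(a,c)$ because the coefficient $g(a)$ depends only on $x$, and $(ab)c + (ac)b = \bigl(a(b+c)\bigr)(b+c) - (ab)b - (ac)c$ is left as a combination of admissible monomials together with $g(a)bc$, $g(a)cb$. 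For Eq.~\eqref{equation:general-alternativity-2} one linearizes $a(ab)$, written as $x(xy) = \psi_1(x,y)x + \psi_2(x,y)y + h(y)\,xx + \psi_3(x,y)xy + \psi_4(x,y)yx + \psi_5(x,y)e$, via $x = a+b$, $y = c$, using $(a+b)\bigl((a+b)c\bigr) = a(ac) + a(bc) + b(ac) + b(bc)$; here the $aa$- and $bb$-terms cancel since their common coefficient $h(c)$ depends only on $y$. In every case the non-unital situation is handled automatically, since all $e$-terms then vanish.

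The computations are routine bookkeeping, so I expect the write-up to be short; the one substantive observation --- and the only place the full strength of the hypothesis is used --- is that the coefficient of the square term is the \emph{same scalar} in all the instances being combined. Without the assumption that it depends only on the non-repeated variable, the $aa$/$cc$ (respectively $bb$/$cc$ or $aa$/$bb$) terms would survive the cancellation and the conclusion would fail, so that cancellation is the ``hard part'' in the sense of being the crux rather than being technically demanding.
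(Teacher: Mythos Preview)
Your proposal is correct and is exactly the argument the paper uses: linearize the hypothesis in the repeated letter via $a\mapsto a+c$ (respectively in the other variable for part~(2)) and observe that the $aa$- and $cc$-type terms cancel precisely because their coefficient depends only on the non-repeated variable. The paper states this in two lines, and your write-up simply spells out the bookkeeping in more detail.
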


\begin{proof}
To prove (1), we linearize Eq.~\eqref{equation:general-flexibility-3} by considering $((a+c)b)(a+c) - (ab)a - (cb)c$ and $(a+c)(b(a+c)) - a(ba) - c(bc)$. The terms with $aa$ and $cc$ vanish, as the coefficient $g(b)$ at $aa$ depends only on $b$. The proof of (2) is completely similar.
\end{proof}

\subsection{Examples and main properties} \label{subsection:examples}

We now give several examples of descendingly flexible and descendingly alternative algebras.

\begin{example}
\leavevmode
\begin{enumerate}[{\rm (1)}]
    \item If a nonunital algebra $\A$ is descendingly flexible (alternative), then its unital hull $\F \oplus \A$ is also descendingly flexible (alternative).
    \item If $\A$ is nilpotent of index at most $3$, that is, $\A^3 = 0$, then $\A$ is both descendingly flexible and descendingly alternative.
    \item Let $\A$ be a standard composition algebra over an arbitrary field $\F$, see~\cite[p.~378]{ElduquePerez3} for its definition. If $\chrs \F \neq 2$, then, by~\cite[Lemma~4.5]{our_standard-composition-algebras}, $\A$ is descendingly flexible. Moreover, the proof can be extended to the case when $\chrs \F$ is arbitrary by using the same arguments.
    \item Consider an Okubo algebra $\Okubo$ over an arbitrary field $\F$, as defined in~\cite{Elduque1}. Then $\Okubo$ is descendingly flexible, since it satisfies $(ab)a = a(ba) = n(a)b$ for all $a,b \in \Okubo$, see~\cite[p.~284, pp.~286--287]{Elduque1} and~\cite[p.~1199]{ElduqueMyung1}.
    \item The algebra of spin factors $\Spin_n = \F 1 \oplus \F^n$ with multiplication given by
    $$
    (\alpha + v) (\beta + w) = (\alpha \beta + \scpr{v}{w}) + (\alpha w + \beta v),
    $$
    where $\scpr{\cdot}{\cdot}$ is the standard inner product on $\F^n$, is both descendingly flexible and descendingly alternative. Indeed, since $\Spin_n$ is commutative, one can verify that for $a = \alpha + v$ and $b = \beta + w$ we have
    $$
    (ab)a = a(ba) = (ba)a = a(ab) = (\scpr{v}{w} - \alpha \beta) a + \beta aa + \alpha ab.
    $$
\end{enumerate}
\end{example}

\begin{remark} \label{remark:not-flexible}
Descendingly flexible algebras are not necessarily flexible. For example, if $\chrs \F \neq 2$, then standard composition algebras of types~II and~III over $\F$ are descendingly flexible, cf.~\cite[Lemma~4.5]{our_standard-composition-algebras}, however, they are not flexible. One can verify that this is also an example of descendingly alternative algebras which are not alternative.

Conversely, flexible (alternative) algebras need not be descendingly flexible (alternative). Indeed, any associative algebra is automatically both flexible and alternative. Let $\A = M_n(\F)$ be the algebra of $(n \times n)$-matrices over an arbitrary field $\F$ with $n \geq 4$, and $\{ E_{ij} \; | \; i, j = 1, \dots, n \}$ be the set of matrix units. Set $a = E_{12}$, $b = E_{23}$, and $c = E_{34}$. Then $abc + cba = abc + acb = abc + bac = E_{14} \notin \Lin'_2(a,b,c)$. Hence Eqs.~\eqref{equation:general-flexibility-1}--\eqref{equation:general-flexibility-2} and Eqs.~\eqref{equation:general-alternativity-1}--\eqref{equation:general-alternativity-2} are not satisfied.
\end{remark}

\begin{proposition} \label{proposition:descending-are-mixing}
Descendingly flexible and descendingly alternative algebras are mixing. Hence they have slowly growing length, and one can apply Lemmas~\ref{lemma:full-stabilization} and~\ref{lemma:general-recursive-length} and Corollary~\ref{corollary:new-elements} to them.
\end{proposition}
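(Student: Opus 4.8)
The plan is to verify directly that every descendingly flexible or descendingly alternative algebra satisfies the mixing condition $(xy)z, z(xy) \in \Lin_1(P(x,y,z))$ for all $x,y,z \in \A$, and then invoke Proposition~\ref{proposition:full-stabilization}, Lemma~\ref{lemma:full-stabilization}, Lemma~\ref{lemma:general-recursive-length}, and Corollary~\ref{corollary:new-elements}, all of which apply to mixing algebras. Since $P(x,y,z) \supseteq Q_l(x,y,z) \cup Q_r(x,y,z)$, it suffices to bound $(xy)z$ (and $z(xy)$) as a linear combination of the one- and two-letter monomials in $x,y,z$ together with the listed three-letter monomials.

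First I would treat the descendingly flexible case. Start from Eq.~\eqref{equation:general-flexibility-1}, which gives $(xy)z \equiv -(zy)x \pmod{\Lin'_2(x,y,z)}$. The monomial $(zy)x$ is not literally among the generators of $Q_l$ or $Q_r$, so I would apply Eq.~\eqref{equation:general-flexibility-1} again in a different guise, or use Eq.~\eqref{equation:general-flexibility-2}, to rewrite $(zy)x$ in terms of allowed monomials; note $(zy)x \in Q_r(x,y,z)$ already after relabeling, since $Q_r$ explicitly contains $(zy)x$. Indeed $Q_r(x,y,z)$ lists $(xz)y,(zx)y,(yz)x,(zy)x$, so $(zy)x$ is permitted, and hence $(xy)z \in \Lin_1(P(x,y,z))$. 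For $z(xy)$: Eq.~\eqref{equation:general-flexibility-2} with the substitution $a \mapsto z, b \mapsto x, c \mapsto y$ gives $z(xy) \equiv -y(xz) \pmod{\Lin'_2(x,y,z)}$, and $y(xz) \in Q_l(x,y,z)$ is one of the listed monomials $x(zy),x(yz),y(xz),y(zx)$. So $z(xy) \in \Lin_1(P(x,y,z))$ as well. I also need to check that $\Lin'_2(x,y,z) \subseteq \Lin_1(P(x,y,z))$, which is immediate since $\Lin'_2$ is spanned by one-letter words, the unity (if present), and the six mixed two-letter products $xy,yx,xz,zx,yz,zy$, all of which occur in $P$.

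For the descendingly alternative case the argument is parallel: Eq.~\eqref{equation:general-alternativity-1} gives $(xy)z \equiv -(xz)y \pmod{\Lin'_2(x,y,z)}$, and $(xz)y \in Q_r(x,y,z)$; Eq.~\eqref{equation:general-alternativity-2} gives $z(xy) \equiv -x(zy) \pmod{\Lin'_2(x,y,z)}$, and $x(zy) \in Q_l(x,y,z)$. Once both inclusions $(xy)z, z(xy) \in \Lin_1(P(x,y,z))$ are established in either case, the algebra is mixing by definition, and the remaining assertions of the proposition follow immediately from the cited results for mixing algebras. The only point requiring care — and the main (minor) obstacle — is bookkeeping the substitutions in Eqs.~\eqref{equation:general-flexibility-1}--\eqref{equation:general-flexibility-2} and \eqref{equation:general-alternativity-1}--\eqref{equation:general-alternativity-2} so that the resulting three-letter monomial lands exactly in the prescribed list for $Q_l$ or $Q_r$ (i.e., that the $2$-fold factor is on the correct side and contains $z$); a careful matching of variable names against the Notation block at the start of Section~\ref{section:mixing-and-sliding} resolves this.
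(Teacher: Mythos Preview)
Your proposal is correct and follows exactly the approach the paper takes: the paper's proof is the single sentence ``Follows immediately from the definition of mixing algebras, Eqs.~\eqref{equation:general-flexibility-1}--\eqref{equation:general-flexibility-2} and Eqs.~\eqref{equation:general-alternativity-1}--\eqref{equation:general-alternativity-2},'' and you have simply unpacked that sentence by exhibiting the relevant substitutions and checking that the resulting three-letter monomials $(zy)x,\,y(xz),\,(xz)y,\,x(zy)$ lie in $Q_r(x,y,z)$ or $Q_l(x,y,z)$ and that $\Lin'_2(x,y,z)\subseteq \Lin_1(P(x,y,z))$. There is nothing to add; your bookkeeping is accurate.
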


\begin{proof}
Follows immediately from the definition of mixing algebras, Eqs.~\eqref{equation:general-flexibility-1}--\eqref{equation:general-flexibility-2} and Eqs.~\eqref{equation:general-alternativity-1}--\eqref{equation:general-alternativity-2}.
\end{proof}

The following example shows that the classes of descendingly flexible and descendingly alternative algebras are not contained in each other.

\begin{example} \label{example:classes-not-contained}
Let $\mathbb{F}$ be an arbitrary field, $\A_{\rm flex}$ be an algebra over $\mathbb{F}$ with the basis $e_1, \dots, e_5$ and multiplication table~\ref{table:multiplication table_1}, $\A_{\rm alt}$ be an algebra over $\mathbb{F}$ with the basis $f_1, \dots, f_5$ and multiplication table~\ref{table:multiplication table_2}. Then $\A_{\rm flex}$ is descendingly flexible but not descendingly alternative, and $\A_{\rm alt}$ is descendingly alternative but not descendingly flexible.

Indeed, $e_1 (e_1 e_2) = e_1 e_3 = e_4 \notin \Lin(e_1, e_2, e_1 e_1, e_1 e_2, e_2 e_1)$ and $f_1 (f_2 f_1) = f_1 f_3 = f_4 \notin \Lin(f_1, f_2, f_1 f_1, f_1 f_2, f_2 f_1)$, so $\A_{\rm flex}$ is not descendingly alternative, and $\A_{\rm alt}$ is not descendingly flexible. We now show that $a(ba) = (ab)a = 0$ for all $a, b \in \A_{\rm flex}$ and $c(cd) = (dc)c = 0$ for all $c, d \in \A_{\rm alt}$. Then $\A_{\rm flex}$ is descendingly flexible and $\A_{\rm alt}$ is descendingly alternative by Proposition~\ref{proposition:sufficient-condition}.

Clearly, the coefficients at $e_1$ and $e_2$ in $ab$ and the coefficients at $f_1$ and $f_2$ in $dc$ are zero, so $(ab)a = 0$ and $(dc)c = 0$. Hence it remains to show that $a(ba) = 0$ and $c(cd) = 0$. By linearity, it is sufficient to consider $b \in \{ e_1, e_2 \}$ and $d \in \{ f_1, f_3, f_5 \}$. Besides, since $e_4 x = x e_4 = 0$ for any $x \in \A_{\rm flex}$, we can take $a \in \Lin(e_1, e_2, e_3, e_5)$, and since we multiply by~$c$ only on the left, we can take $c \in \Lin(f_1, f_2)$. Let $a = k_1 e_1 + k_2 e_2 + k_3 e_3 + k_5 e_5$, $c = l_1 f_1 + l_2 f_2$. Then
\begin{align*}
e_1 a &{}= e_1 (k_1 e_1 + k_2 e_2 + k_3 e_3 + k_5 e_5) = k_2 e_3 + k_3 e_4 + k_1 e_5,\\
a (e_1 a) &{}= (k_1 e_1 + k_2 e_2 + k_3 e_3 + k_5 e_5) (k_2 e_3 + k_3 e_4 + k_1 e_5) = k_1 k_2 e_4 - k_1 k_2 e_4 = 0,\\
a (e_2 a) &{}= a (e_2 (k_1 e_1 + k_2 e_2 + k_3 e_3 + k_5 e_5)) = a (-k_5 e_4) = 0,\\
c (c f_1) &{}= (l_1 f_1 + l_2 f_2) ((l_1 f_1 + l_2 f_2) f_1) = (l_1 f_1 + l_2 f_2) (l_2 f_3 + l_1 f_5) = l_1 l_2 f_4 - l_1 l_2 f_4 = 0,\\
c (c f_3) &{}= c ((l_1 f_1 + l_2 f_2) f_3) = c (l_1 f_4) = 0,\\
c (c f_5) &{}= c ((l_1 f_1 + l_2 f_2) f_5) = c (-l_2 f_4) = 0,
\end{align*}
as required.
\end{example}

\begin{table}[H]
\centering
\begin{minipage}{0.5\textwidth}
\centering
$
\begin{array}{|c|ccccc|}
\hline
\times&e_1&e_2&e_3&e_4&e_5\\\hline
e_1&e_5&e_3&e_4&0&0\\
e_2&0&0&0&0&-e_4\\
e_3&0&0&0&0&0\\
e_4&0&0&0&0&0\\
e_5&0&0&0&0&0\\\hline
\end{array}
$
\caption{\label{table:multiplication table_1} Multiplication table of $\A_{\rm flex}$.}
\end{minipage}%
\begin{minipage}{0.5\textwidth}
\centering
$
\begin{array}{|c|ccccc|}
\hline
\times&f_1&f_2&f_3&f_4&f_5\\\hline
f_1&f_5&0&f_4&0&0\\
f_2&f_3&0&0&0&-f_4\\
f_3&0&0&0&0&0\\
f_4&0&0&0&0&0\\
f_5&0&0&0&0&0\\\hline
\end{array}
$
\caption{\label{table:multiplication table_2} Multiplication table of $\A_{\rm alt}$.}
\end{minipage}
\end{table}

To sum up, the classes of flexible, alternative, descendingly flexible and descendingly alternative algebras are related as follows:

\begin{table}[H]
\centering
\begin{tabular}{ccc}
\fbox{\multrow{Alternative}} & \multrow{$\Rightarrow$\\ $\centernot\Leftarrow$} & \fbox{\multrow{Flexible}}\\
$\centernot\Downarrow \; \centernot\Uparrow$ & & $\centernot\Downarrow \; \centernot\Uparrow$ \\[7pt]
\fbox{\multrow{Descendingly\\ alternative}} & \multrow{$\centernot\Rightarrow$\\ $\centernot\Leftarrow$} & \fbox{\multrow{Descendingly\\ flexible}}
\end{tabular}
\caption{Relations between (descendingly) flexible\\ and (descendingly) alternative algebras.}
\end{table}

\subsection{An example of a logarithmic bound on the length}

The following example shows that the upper bound on the lengths of descendingly flexible and descendingly alternative algebras has at least logarithmic growth with respect to their dimensions. 

\begin{example} \label{example:lower-bound}
Assume that $\chrs \mathbb{F} = 2$. For any $n \in \mathbb{N}$ consider an algebra~$\A$ over~$\mathbb{F}$ with the basis $\{ e_x \; | \; x \in \mathbb{Z}_2^n \}$, so $\dim \A = 2^n$. We define multiplication on $\A$ by the formula $e_x e_y = e_{x+y}$. Clearly, $\A$ is commutative, associative, and unital, with $e_0$ being its unit element.

We now prove that $\A$ is descendingly alternative, i.e., conditions~\eqref{equation:general-alternativity}--\eqref{equation:general-alternativity-2} hold. Due to commutativity of $\A$, it will follow automatically that $\A$ is descendingly flexible. To prove~\eqref{equation:general-alternativity}, note that for any $a = \sum_{x \in \mathbb{Z}_2^n} a_x e_x$ and $b \in \A$ we have $(ba)a = a(ab) = (aa)b = (\sum_{x \in \mathbb{Z}_2^n} a_x^2) b \in \Lin(b)$. Conditions~\eqref{equation:general-alternativity-1} and~\eqref{equation:general-alternativity-2} follow from the fact that $(ab)c + (ac)b = a(bc) + b(ac) = 2abc = 0$.

If $\{ f_1, \dots, f_n \}$ is an arbitrary basis in $\mathbb{Z}_2^n$, then $S = \{ e_{f_1}, \dots, e_{f_n} \}$ is a generating system for $\A$ which clearly has length $n$. Therefore, $l(\A) \geq n = \log_2 (\dim \A)$.
\end{example}

In Sections~\ref{section:sequences-alternative} and~\ref{section:sequences-flexible} we prove that the upper bound on the lengths of these algebras is actually logarithmic. Its exact value is still unknown, but we will see that for descendingly alternative algebras it does not exceed $\lceil \log_2(\dim \A - d_0) \rceil$ (see Corollary~\ref{corollary:descendingly-alternative-length}), and for descendingly flexible algebras of sufficiently large dimension it is not greater than $\lceil \log_2(\dim \A - d_0) + \log_2(8/3) \rceil$ (see Corollary~\ref{corollary:descendingly-flexible-length}).

\begin{remark}
We now compute the exact value of $l(\A)$ in Example~\ref{example:lower-bound}. Assume that $l(\A) \geq n + 1$. Then, by Theorem~\ref{theorem:descendingly-alternative-length} which will be proved later, we have 
$$
2^n - 1 = \dim \A - d_0 \geq 2^n + (n + 1) - 2 \geq 2^n,
$$
a contradiction. Hence $l(\A) \leq n$, so $l(\A) = n$.
\end{remark}

\subsection{Swappability relation} \label{subseqtion:swappablity}

Together with the sequence of differences which was defined in the introduction, one of the main tools to be used in the next two sections is the partition of all letters of a given word into several classes of pairwise swappable letters. To introduce the swappability relation, we first need the definitions of equivalence and formal equivalence for words in letters from $S$.

\begin{definition}
The {\em length of a word} $x \in S^{\infty}$ is the number of letters from $S$ in its notation, i.e., the smallest value $m \in \mathbb{N}_0$ such that $x \in S^m$. We denote it by $l(x)$.
\end{definition}

\begin{definition}
Let $x,y \in \pm S^{\infty}$ with $l(x) = l(y) = m \geq 1$. We say that $x$ and $y$ are {\em equivalent} if $x - y \in \Lin_{m-1}(S)$. We denote $x \sim y$.
\end{definition}

\begin{proposition} \label{proposition:equivalence:properties}
\leavevmode
\begin{enumerate}[{\rm (1)}]
    \item The relation $\sim$ is an equivalence relation.
    \item If $l(x) = l(y) = m \geq 1$, $x \sim y$ and $y \in \Lin_{m-1}(S)$, then $x \in \Lin_{m-1}(S)$.
    \item If $x \sim y$ and $z \in \pm S^{\infty}$, then $xz \sim yz$ and $zx \sim zy$.
\end{enumerate}
\end{proposition}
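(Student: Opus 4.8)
The plan is to prove each of the three parts of Proposition~\ref{proposition:equivalence:properties} in turn, using nothing more than the $\F$-linear-space structure of the spans $\Lin_k(S)$ and the chain $\Lin_0(S) \subseteq \Lin_1(S) \subseteq \dots$ together with the multiplicativity built into the definition of the algebra. For part (1), I would check reflexivity, symmetry, and transitivity directly. Reflexivity: if $l(x) = m \geq 1$ then $x - x = 0 \in \Lin_{m-1}(S)$. Symmetry: if $x - y \in \Lin_{m-1}(S)$ then $y - x = -(x-y) \in \Lin_{m-1}(S)$, since $\Lin_{m-1}(S)$ is a linear subspace closed under negation. Transitivity: if $x \sim y$ and $y \sim z$, with all three of length $m$, then $x - z = (x - y) + (y - z) \in \Lin_{m-1}(S)$ because the span is closed under addition. (One should note that $\sim$ is only defined between words of equal length, so transitivity is stated within a fixed length class; there is no subtlety here.)

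For part (2), suppose $l(x) = l(y) = m \geq 1$, $x \sim y$, and $y \in \Lin_{m-1}(S)$. Then $x = (x - y) + y$, and both summands lie in $\Lin_{m-1}(S)$ — the first by the definition of $x \sim y$, the second by hypothesis — so $x \in \Lin_{m-1}(S)$ since the span is a subspace. This is immediate.

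For part (3), the key point is that left and right multiplication by a fixed element are $\F$-linear maps $\A \to \A$ (this is precisely the bilinearity of the algebra product, and is exactly what the notation $L_a, R_a$ from the excerpt records), and that they respect the filtration in the sense $\Lin_k(S) \cdot z \subseteq \Lin_{k + l(z)}(S)$ and $z \cdot \Lin_k(S) \subseteq \Lin_{k + l(z)}(S)$ whenever $z \in \pm S^{\infty}$ has length $l(z)$. Concretely, let $x \sim y$ with $l(x) = l(y) = m$ and let $z \in \pm S^{\infty}$ with $l(z) = p$. Then $xz$ and $yz$ both have length $m + p$ (or possibly less, but at most $m+p$; the statement $xz \sim yz$ presupposes they have the same length, which holds since $x$ and $y$ have the same length $m$, so $xz, yz \in S^{m+p}$ and both have length $\le m+p$ — if the common length drops below $m+p$ one still argues in that smaller length class, because $xz - yz$ will land even lower). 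The difference satisfies $xz - yz = (x - y)z$ by distributivity; since $x - y \in \Lin_{m-1}(S)$, we get $(x-y)z \in \Lin_{m-1}(S) \cdot \{z\} \subseteq \Lin_{m-1+p}(S) = \Lin_{(m+p)-1}(S)$, which is exactly the condition $xz \sim yz$. The argument for $zx \sim zy$ is identical, using $zx - zy = z(x-y)$ and $z \cdot \Lin_{m-1}(S) \subseteq \Lin_{(m+p)-1}(S)$.

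I do not expect any real obstacle here; the only thing requiring a moment's care is bookkeeping about lengths in part (3) — making sure that when $xz$ happens to have length strictly less than $l(x) + l(z)$, the claimed equivalence is still meaningful and still follows, which it does because the difference $(x-y)z$ then lies in an even lower span. Everything else is a one-line consequence of the subspace axioms and bilinearity of multiplication.
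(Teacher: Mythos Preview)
Your proposal is correct and follows essentially the same approach as the paper's own proof: direct verification of reflexivity, symmetry, and transitivity for (1); writing $x = (x-y) + y$ for (2); and using bilinearity plus $\Lin_{m-1}(S) \cdot z \subseteq \Lin_{m+p-1}(S)$ for (3). Your extra caution in (3) about the product $xz$ possibly having length strictly below $l(x)+l(z)$ is unnecessary here, since in this paper the length of a word is the syntactic count of letters, so $l(xz) = l(x) + l(z)$ always; the paper simply states $l(xz) = m+k$ without further comment.
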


\begin{proof}
\leavevmode
\begin{enumerate}[{\rm (1)}]
    \item It is clear that $\sim$ is reflexive and symmetric. Besides, if $x \sim y$ and $y \sim z$ with $l(x) = l(y) = l(z) = m$, then $x - y, \: y - z \in \Lin_{m-1}(S)$ implies that $x - z = (x - y) + (y - z) \in \Lin_{m-1}(S)$, so $x \sim z$.
    \item Follows immediately from the the fact that $x - y \in \Lin_{m-1}(S)$.
    \item Let $l(x) = l(y) = m \geq 1$ and $l(z) = k$. Then $l(xz) = l(yz) = l(zx) = l(zy) = m + k$. Since $x - y \in \Lin_{m-1}(S)$, we have $xz - yz = (x-y)z \in \Lin_{m+k-1}(S)$ and $zx - zy = z(x-y) \in \Lin_{m+k-1}(S)$, so $xz \sim yz$ and $zx \sim zy$. \qedhere
\end{enumerate}
\end{proof}

\begin{remark}
Definition of equivalence can be extended to the case when $x, y$ are not just words of length $m$, but (formal) linear combinations of elements from $S^m$, and both~$x$ and~$y$ have nonzero summands which correspond to some words of length~$m$ in letters from~$S$. In this case Proposition~\ref{proposition:equivalence:properties} remains valid. For simplicity of notation, we will use this modified definition in the proofs of Lemmas~\ref{lemma:descendingly-flexible-length} and~\ref{lemma:pulling-one-letter-subwords} without further notice.
\end{remark}

\begin{proposition} \label{proposition:descendingly-flexible}
Assume that $a, b, c \in S^{\infty}$ with $l(a), \, l(b), \, l(c) \geq 1$.
\begin{enumerate}[\rm (1)]
\item If $\A$ is descendingly flexible, then
\begin{align}
    (ab)c &{}\sim -(cb)a, \label{equation:general-equivalence-1}\\
    a(bc) &{}\sim -c(ba). \label{equation:general-equivalence-2}
\end{align}
\item If $\A$ is descendingly alternative, then
\begin{align}
    (ab)c &{}\sim -(ac)b, \label{equation:general-equivalence-3}\\
    a(bc) &{}\sim -b(ac). \label{equation:general-equivalence-4}
\end{align}
\end{enumerate}
\end{proposition}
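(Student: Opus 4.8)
The statement to prove is Proposition~\ref{proposition:descendingly-flexible}: the four equivalences $(ab)c \sim -(cb)a$, $a(bc) \sim -c(ba)$ (descendingly flexible case) and $(ab)c \sim -(ac)b$, $a(bc) \sim -b(ac)$ (descendingly alternative case), where now $a,b,c$ are words from $S^{\infty}$ of positive length rather than arbitrary algebra elements. The plan is to deduce these from the defining relations~\eqref{equation:general-flexibility-1}--\eqref{equation:general-flexibility-2} and~\eqref{equation:general-alternativity-1}--\eqref{equation:general-alternativity-2}, using the observation that the ``error terms'' allowed in those relations are elements of $\Lin'_2(a,b,c)$, which for \emph{words} $a,b,c$ all have length strictly less than $l(a)+l(b)+l(c)$.

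First I would treat the descendingly flexible case. Apply~\eqref{equation:general-flexibility-1} with the algebra elements taken to be the words $a,b,c$: this gives $(ab)c + (cb)a \in \Lin'_2(a,b,c)$. Now observe that every generator of $\Lin'_2(a,b,c)$ — namely (when $\A$ is unital) $e$, and the words $a,b,c,ab,ba,cb,bc,ac,ca$ — has length in $S$ at most $l(a)+l(b)+l(c)-\min\{l(a),l(b),l(c)\} < l(a)+l(b)+l(c)$, using that $l(a),l(b),l(c) \ge 1$ and that $l(xy)=l(x)+l(y)$ for words. Hence $\Lin'_2(a,b,c) \subseteq \Lin_{l(a)+l(b)+l(c)-1}(S)$. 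Since $(ab)c$ and $(cb)a$ are words of length exactly $m := l(a)+l(b)+l(c)$, we get $(ab)c - (-(cb)a) = (ab)c+(cb)a \in \Lin_{m-1}(S)$, which is precisely $(ab)c \sim -(cb)a$. The same argument applied to~\eqref{equation:general-flexibility-2} yields $a(bc) \sim -c(ba)$. The descendingly alternative case is identical, using~\eqref{equation:general-alternativity-1} and~\eqref{equation:general-alternativity-2} in place of~\eqref{equation:general-flexibility-1} and~\eqref{equation:general-flexibility-2}.

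There is essentially no obstacle here; the content is the bookkeeping that $\Lin'_2(a,b,c)$, which is defined as a linear span of \emph{algebra elements} in the original definition, collapses into a low-degree span of \emph{words} once $a,b,c$ are themselves words of positive length — the key inequality being $l(u)+l(v) < l(a)+l(b)+l(c)$ for any two-letter combination $uv$ of distinct elements from $\{a,b,c\}$, and $l(w) \le \max < m$ for a single one of them, and $l(e)=0<m$ in the unital case. I would state this length bound as the one computational line of the proof and then invoke~\eqref{equation:general-flexibility-1}--\eqref{equation:general-flexibility-2} and~\eqref{equation:general-alternativity-1}--\eqref{equation:general-alternativity-2} together with Proposition~\ref{proposition:equivalence:properties}(1) (to be able to write ``$\sim$'' in the asserted form, since $x \sim -y$ iff $x+y \in \Lin_{m-1}(S)$). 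No induction, no case analysis on $\chrs \F$, and conditions~\eqref{equation:general-flexibility} and~\eqref{equation:general-alternativity} are not needed for this proposition.
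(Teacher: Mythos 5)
Your proposal is correct and matches the paper's own proof, which likewise just notes that $l((ab)c) = l(a(bc)) = l(a)+l(b)+l(c) =: n$ while every generator of $\Lin'_2(a,b,c)$ has length strictly less than $n$, so the defining relations~\eqref{equation:general-flexibility-1}--\eqref{equation:general-flexibility-2} and~\eqref{equation:general-alternativity-1}--\eqref{equation:general-alternativity-2} immediately give the claimed equivalences. Your extra bookkeeping on the lengths of the two-letter generators is just a more explicit version of the same one-line observation.
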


\begin{proof}
Let us denote $n = l(a) + l(b) + l(c)$. Then $l((ab)c) = l(a(bc)) = n$, and the statement follows immediately from Eqs.~\eqref{equation:general-flexibility-1}--\eqref{equation:general-flexibility-2} and Eqs.~\eqref{equation:general-alternativity-1}--\eqref{equation:general-alternativity-2}, since the lengths of the basis elements of $\Lin'_2(a,b,c)$ are less than $n$.
\end{proof}

Proposition~\ref{proposition:descendingly-flexible} allows us to introduce another equivalence relation for words in letters from~$S$ which is stronger than usual equivalence.

\begin{definition} \label{definition:formal-equivalence}
Let $S \subseteq \A$ and $x, y \in \pm S^{\infty}$. We say that $x$ and $y$ are {\em formally equivalent} if there is a chain of elementary equivalences between $x$ and $y$. Elementary equivalences are defined inductively:
\begin{itemize}
    \item If an equivalence has the form described in Eqs.~\eqref{equation:general-equivalence-1}--\eqref{equation:general-equivalence-2} (if~$\A$ is descendingly flexible) or Eqs.~\eqref{equation:general-equivalence-3}--\eqref{equation:general-equivalence-4} (if~$\A$ is descendingly alternative), then it is elementary.
    \item If $u \sim v$ is an elementary equvalence, then for any $z \in S^{\infty}$ the equivalences $uz \sim vz$ and $zu \sim zv$ are also elementary. 
\end{itemize}
\end{definition}

\begin{definition} \label{definition:swappable}
Let $S \subseteq \A$, $x \in S^{\infty}$, and $s, t \in S$ are contained in $x$ (denoted by $s, t \in x$). We say that $s$ and $t$ {\em are swappable} (denoted by $s \leftrightarrow t$) if one of the following conditions holds:
\begin{enumerate}[{\rm (1)}]
    \item $\A$ is descendingly flexible, and $x$ is formally equivalent to some word either of the form $\dots R_s L_t \dots$ or of the form $\dots L_s R_t \dots$;
    \item $\A$ is descendingly alternative, and $x$ is formally equivalent to some word either of the form $\dots R_s R_t \dots$ or of the form $\dots L_s L_t \dots$.
\end{enumerate}
\end{definition}

\begin{proposition} \label{proposition:alternating-letters}
\leavevmode
\begin{enumerate}[{\rm (1)}]
    \item The relation $\leftrightarrow$ is symmetric and transitive.
    \item If $x \in S^m$, $s, t \in x$, and $s \leftrightarrow t$, then $s = t$ implies $x \in \Lin_{m-1}(S)$.
    \item If $x \in S^m$, and for some $n > |S|$ the letters $s_1, \dots, s_n \in x$ are pairwise swappable, then $x \in \Lin_{m-1}(S)$.
\end{enumerate}
\end{proposition}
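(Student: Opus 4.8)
\textbf{Proof proposal for Proposition~\ref{proposition:alternating-letters}.}

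The plan is to treat the three parts in order, with parts (1) and (2) serving as preparation for the key statement (3). For part~(1), symmetry is immediate from the definitions: in the descendingly flexible case, if $x$ is formally equivalent to $\dots R_s L_t \dots$, then by applying the elementary equivalences of Eqs.~\eqref{equation:general-equivalence-1}--\eqref{equation:general-equivalence-2} locally one rewrites this as a word in which $t$ and $s$ appear in the swapped pattern $\dots R_t L_s \dots$ — here one must check that the rewriting $(uv)w \sim -(wv)u$ and $u(vw) \sim -w(vu)$ genuinely exchange the roles of the outer factors, and likewise for the descendingly alternative case with Eqs.~\eqref{equation:general-equivalence-3}--\eqref{equation:general-equivalence-4}. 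Transitivity is the more delicate half: from $s \leftrightarrow t$ and $t \leftrightarrow u$ we have two (possibly different) formally equivalent representatives of~$x$; since formal equivalence is itself transitive and preserves the property of being equivalent modulo lower length, one produces a single representative exhibiting both $s,t$ and $t,u$ in swappable position, then shifts along the chain to place $s$ and $u$ adjacent in the required pattern. I expect this to require a short combinatorial lemma about how the canonical patterns $R R$, $L L$ (resp.\ $R L$, $L R$) can be concatenated and reassociated.

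For part~(2), suppose $s = t$ and $s \leftrightarrow t$. Then $x$ is formally equivalent, hence equivalent (modulo $\Lin_{m-1}(S)$) up to sign, to a word $w$ of one of the canonical adjacent forms, say $\dots R_s L_s \dots$ in the flexible case. But $R_s L_s$ applied to anything is $(s \cdot \text{something}) \cdot s$ with $s=t$, so using Eq.~\eqref{equation:general-flexibility} — namely $(ab)a,\ a(ba) \in \Lin_1(a,b,aa,ab,ba)$ with $a = s$ — the relevant length-$m$ subword collapses into a linear combination of strictly shorter words, giving $w \in \Lin_{m-1}(S)$; the alternative case uses Eq.~\eqref{equation:general-alternativity} instead. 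By Proposition~\ref{proposition:equivalence:properties}(2) (in its formal-equivalence-compatible form) and Proposition~\ref{proposition:descendingly-flexible}, $x \in \Lin_{m-1}(S)$ as well. The point to be careful about is that the collapsing happens on an internal subword of length possibly less than $m$, so one invokes Proposition~\ref{proposition:equivalence:properties}(3) to propagate the drop in length back out to all of~$x$.

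Part~(3) is where the real work lies, and I expect it to be the main obstacle. Given $n > |S|$ pairwise swappable letters $s_1, \dots, s_n$ in $x$, the pigeonhole principle forces $s_i = s_j$ for some $i \neq j$; since $s_i \leftrightarrow s_j$ by hypothesis, part~(2) immediately yields $x \in \Lin_{m-1}(S)$. So the entire difficulty is actually absorbed into establishing parts~(1) and~(2) correctly — in particular transitivity in~(1), because without it "pairwise swappable" would not interact well with the pigeonhole collision (we genuinely only need $s_i \leftrightarrow s_j$ for the specific colliding pair, so transitivity is used to guarantee this pair is swappable given only the pairwise hypothesis among the original listing — though in fact the hypothesis already states all pairs are swappable, so part~(3) is a one-line corollary of~(2) plus pigeonhole). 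The subtlety to flag is ensuring the definition of formal equivalence and the elementary-equivalence calculus are robust enough that "$x$ formally equivalent to a word with $s,t$ adjacent in canonical position" can always be realized simultaneously for any single chosen pair; that robustness is exactly what part~(1) provides.
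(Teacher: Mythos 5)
Your parts (2) and (3) are correct and coincide with the paper's proof: pigeonhole reduces (3) to (2), and (2) holds because a canonical adjacent pattern with $s = t$ contains a subword of the form $(sv)s$, $s(vs)$, $(vs)s$ or $s(sv)$, which Eq.~\eqref{equation:general-flexibility} or~\eqref{equation:general-alternativity} collapses into strictly shorter words, after which Proposition~\ref{proposition:equivalence:properties} propagates the drop in length back out to all of $x$. The symmetry half of (1) also matches the paper.

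The genuine gap is transitivity in part (1), which you yourself flag as resting on an unproved ``short combinatorial lemma.'' Your proposed route --- produce a \emph{single} representative of $x$ exhibiting both the pair $s,t$ and the pair $t,u$ in canonical adjacent position simultaneously --- is not obviously achievable (the two patterns may conflict) and is not what is needed. The paper's argument is a short transport computation that avoids this entirely. Let $x \sim \dots X_s Y_t \dots$ and $x \sim \dots Z_t W_u \dots$ be the two witnessing chains, with $X,Y,Z,W \in \{L,R\}$. Apply one elementary equivalence to the first representative to obtain $-\dots X_t Y_s \dots$; this interchanges the positions of $s$ and $t$ in the word. Now run the first chain backwards and then the second chain forwards on the word with $s$ and $t$ transposed: every elementary equivalence in those chains applies verbatim, since the identities \eqref{equation:general-equivalence-1}--\eqref{equation:general-equivalence-4} hold for arbitrary letters, and one lands on $-\dots Z_s W_u \dots$, so $s$ now occupies $t$'s former slot next to $u$ and $s \leftrightarrow u$ follows with no auxiliary lemma about concatenating or reassociating patterns. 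Note that part (3) as stated does not actually need transitivity (the hypothesis already gives swappability of the colliding pair, as you observe), but transitivity is itself one of the three claims of the proposition and is used essentially in Lemmas~\ref{lemma:descendingly-alternative-classes} and~\ref{lemma:descendingly-flexible-bound}, so without this transport step your proof of the proposition is incomplete.
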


\begin{proof}
\leavevmode
\begin{enumerate}[{\rm (1)}]
    \item Symmetricity follows from Eqs.~\eqref{equation:general-equivalence-1}--\eqref{equation:general-equivalence-4}. Assume now that $s \leftrightarrow t \leftrightarrow u$, and 
    $$
    x = \dots s \dots t \dots u \dots \sim \dots X_s Y_t \dots \sim \dots Z_t W_u \dots,
    $$
    where $X, Y, Z, W \in \{ L, R \}$. Then we have
    \begin{align*}
    x &{}\sim \dots X_s Y_t \dots \sim - \dots X_t Y_s \dots \\
    &{}\sim - \dots t \dots s \dots u \dots \sim - \dots Z_s W_u \dots,
    \end{align*}
    so $s \leftrightarrow u$.
    \item Follows immediately from Proposition~\ref{proposition:equivalence:properties}(2) together with Eqs.~\eqref{equation:general-flexibility} and~\eqref{equation:general-alternativity}.
    \item Since $n > |S|$, there exist $i, j \in \{ 1, \dots, n \}$ such that $i \neq j$ but $s_i = s_j$. Then the statement follows from~(2). \qedhere
\end{enumerate}
\end{proof}

\section{Lengths of descendingly alternative algebras} \label{section:sequences-alternative}

In this section we assume that $\A$ is a descendingly alternative algebra over an arbitrary field~$\F$. Recall that the set $S^{(m)} \subseteq S^m$ was introduced in Notation~\ref{notation:one-letter-at-a-time}.

\begin{lemma} \label{lemma:descendingly-alternative-classes}
Let $S \subseteq \A$ and $w \in S^{(m)}$ for some $m \geq 2$. Then
\begin{enumerate}[{\rm (1)}]
    \item There exists a word $xy = (((x_1 x_2) \dots) x_k) \cdot (y_{m-k} (\dots (y_2 y_1)))$, where $1 \leq k \leq m-1$, such that $w \sim \pm xy$.
    \item There are at most two classes of pairwise swappable letters in $xy$, namely, $y_1, x_2, \dots, x_k$ and $x_1, y_2, \dots, y_{m-k}$.
\end{enumerate}
\end{lemma}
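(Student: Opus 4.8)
The plan is to argue by induction on $m$, using the recursive structure of $S^{(m)}$ from Notation~\ref{notation:one-letter-at-a-time} together with the elementary equivalences of Proposition~\ref{proposition:descendingly-flexible}(2). Recall that any $w \in S^{(m)}$ has the form $X_{m-1} \dots X_2 X_1 s_m$ where each $X_i \in \{ L_{s_i}, R_{s_i} \}$; equivalently, $w$ is built from a word $w' \in S^{(m-1)}$ by a single outermost multiplication, $w = s \cdot w'$ or $w = w' \cdot s$ with $s \in S$. So the induction step has two symmetric cases, left and right multiplication.

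First I would set up the base case $m = 2$: here $w \in \{ s_1 s_2, s_2 s_1 \}$, and taking $k = 1$, $x = x_1$ a single letter and $y = y_1$ a single letter, the word $xy$ is literally $w$ (up to sign $+1$), and the two singleton classes $\{ y_1 \}$ and $\{ x_1 \}$ trivially satisfy~(2). For the induction step, assume $w = s \cdot w'$ (the case $w = w' \cdot s$ is symmetric, exchanging the roles of the left and right "combs"). By the inductive hypothesis, $w' \sim \pm x'y'$ where $x'y' = (((x_1 x_2)\dots)x_k)\cdot(y_{m-1-k}(\dots(y_2 y_1)))$ with the two swappability classes $C'_1 = \{ y_1, x_2, \dots, x_k \}$ and $C'_2 = \{ x_1, y_2, \dots, y_{m-1-k} \}$. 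By Proposition~\ref{proposition:equivalence:properties}(3), $w = s w' \sim \pm s(x'y')$. Now apply the elementary equivalence~\eqref{equation:general-equivalence-4}, namely $a(bc) \sim -b(ac)$, with $a = s$, $b = x'$, $c = y'$: this gives $s(x'y') \sim -x'(sy')$. Since $sy' = s \cdot (y_{m-1-k}(\dots(y_2 y_1)))$ is again a right-nested comb, but now with the letter $s$ prepended at the top, we recognize $x'(sy')$ as a word of the required canonical shape with the same left comb $((( x_1 x_2)\dots)x_k)$ and the new right comb $y_{m-k}(\dots(y_2 y_1))$ in which $y_{m-k} := s$. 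Thus $w \sim \pm x''y''$ with $x'' = x'$ and $y''$ the extended right comb, establishing~(1) with the same $k$.

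For part~(2) in this step, I need to check that the new top-left letter of the right comb — call it $s$ — lands in the class $C_2 = \{ x_1 \} \cup \{ y_2, \dots, y_{m-k} \}$, i.e. that $s = y_{m-k}$ is swappable with $x_1$, with every $y_j$ ($2 \le j \le m-1-k$), and that the classes $C_1, C_2$ exhaust the letters; since swappability is transitive by Proposition~\ref{proposition:alternating-letters}(1), it suffices to exhibit one swap partner. In the canonical word $x''y'' = x'(sy')$, the outermost multiplication is $L_{x'}$ applied to $sy'$, and the second-outermost (inside the right factor) is $L_s$; so the word contains the pattern $\dots L_{x_1} \dots L_s \dots$ once we peel $x'$ down to its leading letter $x_1$ — more precisely, after using~\eqref{equation:general-equivalence-4} again if necessary to expose $L_{x_1} L_s$, or simply reading off that $x''y''$ is formally equivalent to a word of the form $\dots L_s L_{x_1}\dots$, which is exactly condition~(2) of Definition~\ref{definition:swappable} for descendingly alternative algebras. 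Hence $s \leftrightarrow x_1$, so $s \in C_2$. The letters of $w'$ keep their old classes by the inductive hypothesis together with Definition~\ref{definition:formal-equivalence} (elementary equivalences are preserved under multiplying by a letter on either side), and $C_1 \cup C_2$ visibly contains all $m$ letters. The right-multiplication case $w = w's$ is handled identically using~\eqref{equation:general-equivalence-3} in place of~\eqref{equation:general-equivalence-4}, appending $s$ to the left comb and placing it in $C_1$.

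The main obstacle I anticipate is purely bookkeeping rather than conceptual: one must be careful that the canonical form produced by the induction genuinely has the left factor fully left-nested and the right factor fully right-nested after each application of~\eqref{equation:general-equivalence-3}/\eqref{equation:general-equivalence-4}, and that the "formally equivalent to a word of the form $\dots L_s L_t \dots$" certificate for swappability is correctly extracted — in particular that prepending a letter to one of the combs does not disturb the swappability certificates already established for the interior letters. This is where Definition~\ref{definition:formal-equivalence}'s closure under one-sided multiplication does the work, and I would state that reduction explicitly rather than leave it implicit. Everything else — the signs, which are immaterial since the statement allows $w \sim \pm xy$, and the arithmetic of the index $k$ — is routine.
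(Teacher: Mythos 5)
Your proposal is correct and follows essentially the same route as the paper: induction on $m$ via the recursive structure of $S^{(m)}$, using the elementary equivalences \eqref{equation:general-equivalence-3} and \eqref{equation:general-equivalence-4} to absorb each new letter into the appropriate comb, and peeling manipulations to expose adjacent same-type multiplications $\dots L_s L_t \dots$ or $\dots R_s R_t \dots$ for the swappability claims. The only cosmetic differences are that the paper proves part~(2) directly on the canonical form $xy$ by two chains of equivalences (moving $y_2,\dots,y_{m-k}$ outward as left multiplications and $x_2,\dots,x_k$ outward as right multiplications) rather than threading it through the induction, and that the peeling of $x'$ in your step uses \eqref{equation:general-equivalence-3} rather than \eqref{equation:general-equivalence-4} --- neither affects correctness.
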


\begin{proof}
\leavevmode
\begin{enumerate}[{\rm (1)}]
    \item We prove the statement by induction on $m$. Clearly, the induction base is true. In order to prove the induction step, we note that if $w = xy \in S^m$ for $x = ((x_1 x_2) \dots) x_k$ and $y = y_{m-k} (\dots (y_2 y_1))$, then for any $s \in S$ we have, by Eqs.~\eqref{equation:general-equivalence-3} and~\eqref{equation:general-equivalence-4}, $ws = (xy)s \sim -(xs)y$ and $sw = s(xy) \sim -x(sy)$, which are again of the desired form.
    \item Note that 
    \begin{align*}
    xy &{}= (((x_1 x_2) \dots) x_k) \cdot (y_{m-k} (\dots (y_2 y_1))) \\
    &{}\sim -y_{m-k} ((((x_1 x_2) \dots) x_k) \cdot (y_{m-k-1} (\dots (y_2 y_1)))) \\
    &{}\sim \dots \sim \pm y_{m-k} (\dots (y_{m-k-1} ((((x_1 x_2) \dots) x_k) \cdot y_1))),
    \end{align*}
    so $y_1, x_2, \dots, x_k$ are pairwise swappable. Similarly, $x_1, y_2, \dots, y_{m-k}$ are pairwise swappable. \qedhere
\end{enumerate}
\end{proof}

\begin{proposition}[{\cite[Lemma~2.11, Corollary~2.12]{Guterman_upper-bounds}}] \label{proposition:equal-linear-spans}
Let $S, S'$ be two subsets of $\A$ such that $\Lin_1(S) = \Lin_1(S')$. Then $\Lin_k(S) = \Lin_k(S')$ for all $k \in \N$, so $\Lin_{\infty}(S) = \Lin_{\infty}(S')$. Hence $S$ is generating for $\A$ if and only if $S'$ is generating for~$\A$, and $l(S) = l(S')$.
\end{proposition}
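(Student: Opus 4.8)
The plan is to prove, by induction on $k$, that $\Lin_k(S) = \Lin_k(S')$ for every $k \geq 0$; all the other assertions then follow at once. Indeed, from this equality of all the finite word-spans we get $\Lin_\infty(S) = \bigcup_{k \geq 0} \Lin_k(S) = \bigcup_{k \geq 0} \Lin_k(S') = \Lin_\infty(S')$; the property of being generating is exactly $\Lin_\infty(\cdot) = \A$, hence holds for $S$ iff it holds for $S'$; and by Definition~\ref{definition:system-length} the length $l(S) = \min \{ k \geq 0 \mid \Lin_k(S) = \Lin_\infty(S) \}$ depends only on the ascending chain $\Lin_0(S) \subseteq \Lin_1(S) \subseteq \cdots$, which coincides term by term with the one for $S'$, so $l(S) = l(S')$.

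For the induction, the base cases $k = 0$ and $k = 1$ are immediate: $\Lin_0(S)$ is $\{0\}$ or $\F e$ depending only on whether $\A$ is unital, and $\Lin_1(S) = \Lin_1(S')$ is the hypothesis. For the inductive step, fix $k \geq 1$, assume $\Lin_j(S) = \Lin_j(S')$ for all $j \leq k$, and, by symmetry, prove only $\Lin_{k+1}(S') \subseteq \Lin_{k+1}(S)$. Take a word $w \in (S')^{k+1}$. If $l(w) \leq k$, then $w \in \Lin_k(S') = \Lin_k(S) \subseteq \Lin_{k+1}(S)$. Otherwise $l(w) = k+1 \geq 2$, and the outermost product in $w$ yields a decomposition $w = uv$ with $u \in (S')^i$, $v \in (S')^j$, $i, j \geq 1$ and $i + j = k+1$; note $i, j \leq k$. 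By the inductive hypothesis $u \in \Lin_i(S') = \Lin_i(S)$ and $v \in \Lin_j(S') = \Lin_j(S)$, so we may write $u = \sum_p \alpha_p u_p$ and $v = \sum_q \beta_q v_q$ with $u_p \in S^i$, $v_q \in S^j$ and scalars $\alpha_p, \beta_q \in \F$. Then $w = uv = \sum_{p,q} \alpha_p \beta_q\, u_p v_q$, and every $u_p v_q$ is a product of a word of length $\leq i$ by a word of length $\leq j$, hence a word of length $\leq i + j = k+1$ in letters from $S$; thus $u_p v_q \in S^{k+1}$ and $w \in \Lin_{k+1}(S)$, completing the step.

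I expect the only delicate point to be the choice of decomposition in the inductive step. It is essential to split $w$ into exactly two subwords whose lengths add up to $k+1$: merely knowing $w \in \Lin_k(S') \cdot \Lin_k(S')$ would only place $w$ in $\Lin_{2k}(S)$, which is useless. Once $w = uv$ with $l(u) + l(v) = k+1$ is fixed, rewriting $u$ as a linear combination of words of length $\leq l(u)$ and $v$ as a linear combination of words of length $\leq l(v)$ never lets the total length exceed $k+1$; in the unital case one additionally uses $ex = xe = x$ to absorb any factor equal to $e$, which only lowers lengths. Beyond this bookkeeping there is no real obstacle: the content of the statement is precisely that the whole tower of word-spans $\{\Lin_k(S)\}_k$ is determined by its level $\Lin_1(S)$.
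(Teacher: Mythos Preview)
The paper does not supply its own proof of this proposition; it merely cites \cite[Lemma~2.11, Corollary~2.12]{Guterman_upper-bounds}. Your induction on $k$ is correct and is the standard argument: splitting a word $w$ of length $k+1$ at its outermost product as $w = uv$ with $l(u)+l(v)=k+1$, applying the inductive hypothesis to each factor separately, and then bilinearly expanding is exactly what is needed, and your remark about absorbing the unit in the unital case handles the only edge case. There is nothing to compare against here, and nothing to fix.
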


\begin{corollary} \label{corollary:descendingly-alternative-bound}
Let $S \subseteq \A$. Then $\Lin_{2d_1(S) + 1}(S) = \Lin_{2d_1(S)}(S)$, so $l(S) \leq 2d_1(S)$.
\end{corollary}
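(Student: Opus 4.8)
The plan is to feed the structural results of this section into the one-letter-at-a-time reduction. Recall that a descendingly alternative algebra is mixing (Proposition~\ref{proposition:descending-are-mixing}), so Corollary~\ref{corollary:new-elements} gives $\Lin_m(S) = \Lin_{m-1}(S) + \Lin(S^{(m)})$ for every $m \in \N$; hence to prove $\Lin_m(S) = \Lin_{m-1}(S)$ it is enough to show $S^{(m)} \subseteq \Lin_{m-1}(S)$, and then Lemma~\ref{lemma:full-stabilization} propagates the equality upward to give $l(S) \leq m-1$.

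Before doing that I would normalize the generating set. By Proposition~\ref{proposition:equal-linear-spans} we may replace $S$ by any $S'$ with $\Lin_1(S') = \Lin_1(S)$ without affecting $l(S)$ or any space $\Lin_k(S)$, and $d_1$ is unchanged as well, since by Definition~\ref{definition:sequence-differences} it equals $\dim \Lin_1(S) - d_0$. Choosing $S'$ linearly independent with, in the unital case, $e \notin \Lin(S')$, we may therefore assume from the outset that $|S| = d_1(S) =: d$. If $d = 0$ then $d_1(S) = 0$, and Proposition~\ref{proposition:full-stabilization} forces $d_k(S) = 0$ for all $k \geq 1$, i.e.\ $l(S) = 0 = 2 d_1(S)$; so assume $d \geq 1$ and put $m := 2d + 1 \geq 3$.

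Now take $w \in S^{(m)}$. By Lemma~\ref{lemma:descendingly-alternative-classes} there is a word $xy = (((x_1 x_2)\dots)x_k)\cdot(y_{m-k}(\dots(y_2 y_1)))$ with $1 \leq k \leq m-1$ and $w \sim \pm xy$, whose $m$ letters split into the two families $\{ y_1, x_2, \dots, x_k \}$ and $\{ x_1, y_2, \dots, y_{m-k} \}$, each consisting of pairwise swappable letters. Since the two families have $k$ and $m-k$ letters and $k + (m-k) = m = 2d+1$, one of them has at least $d+1 > d = |S|$ pairwise swappable letters; hence Proposition~\ref{proposition:alternating-letters}(3) gives $xy \in \Lin_{m-1}(S)$, and then $w \in \Lin_{m-1}(S)$ by Proposition~\ref{proposition:equivalence:properties}(2). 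Thus $S^{(m)} \subseteq \Lin_{m-1}(S)$, so $\Lin_{2d_1(S)+1}(S) = \Lin_m(S) = \Lin_{m-1}(S) = \Lin_{2d_1(S)}(S)$, and Lemma~\ref{lemma:full-stabilization} yields $l(S) \leq 2 d_1(S)$.

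I do not expect a genuine obstacle here: all the heavy lifting sits in Lemma~\ref{lemma:descendingly-alternative-classes} and Proposition~\ref{proposition:alternating-letters}, which are already available, and the final counting step is a one-line pigeonhole over the two swappability classes. The only points that need care are the initial reduction to a minimal system — this is exactly what upgrades a crude bound $l(S) \leq 2|S|$ to the sharper $l(S) \leq 2 d_1(S)$ — together with the bookkeeping of the degenerate case $d_1(S) = 0$ and of the unital versus non-unital conventions when exhibiting $S'$.
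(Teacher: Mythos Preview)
Your proof is correct and follows essentially the same line as the paper's: reduce to $|S| = d_1$ via Proposition~\ref{proposition:equal-linear-spans}, apply Lemma~\ref{lemma:descendingly-alternative-classes} to any $w \in S^{(2d_1+1)}$, and use the pigeonhole over the two swappability classes together with Proposition~\ref{proposition:alternating-letters}(3) to land in $\Lin_{2d_1}(S)$. You are slightly more explicit than the paper in separating out the degenerate case $d_1(S) = 0$ and in invoking Proposition~\ref{proposition:equivalence:properties}(2) to pass from $xy$ back to $w$, but the argument is otherwise identical.
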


\begin{proof}
Recall that $d_1 = d_1(S) = \dim \Lin_1(S) - \dim \Lin_0(S)$. It follows that $\Lin_1(S) = \Lin_0(S) + \Lin(S')$ for some $S' \subseteq S$ such that $|S'| = d_1$. Then $\Lin_1(S) = \Lin_1(S')$, so, by Proposition~\ref{proposition:equal-linear-spans}, $\Lin_k(S) = \Lin_k(S')$ for all $k \in \N$. Thus we may assume that $|S| = d_1(S) = d_1$.

Consider an arbitrary word $xy \in S^{2d_1 + 1}$ such that
$$
x = ((x_1 x_2) \dots) x_k \ \mbox{ and } \ y = y_{2d_1+1-k} (\dots (y_2 y_1))
$$
where $x_1, \dots, x_k, y_1, \dots, y_{2d_1 + 1 - k} \in S$ and $1 \leq k \leq 2d_1$. By Lemma~\ref{lemma:descendingly-alternative-classes}(2), $y_1, x_2, \dots, x_k$ are pairwise swappable ($k$ letters in total) and $x_1, y_2, \dots, y_{2d_1+1-k}$ are pairwise swappable ($2d_1 + 1 - k$ letters in total). Since either $k \geq d_1 + 1$ or $2d_1 + 1 - k \geq d_1 + 1$, Proposition~\ref{proposition:alternating-letters}(3) implies that $xy \in \Lin_{2d_1}(S)$.

It follows from Corollary~\ref{corollary:new-elements} and Lemma~\ref{lemma:descendingly-alternative-classes}(1) that $\Lin_{2d_1 + 1}(S) = \Lin_{2d_1}(S) + \Lin(S^{(2d_1 + 1)}) = \Lin_{2d_1}(S)$. By Lemma~\ref{lemma:full-stabilization}, in this case we have $\Lin_{\infty}(S) = \Lin_{2d_1}(S)$. Hence, by Definition~\ref{definition:system-length}, $l(S) \leq 2d_1$.
\end{proof}

We now prove an even stronger result.

\begin{lemma} \label{lemma:descendingly-alternative-length}
Let $S \subseteq \A$, $xy = (((y_1 x_2) \dots) x_k) \cdot (y_{n-k} (\dots (y_2 x_1))) \in S^n \setminus \Lin_{n-1}(S)$. Then $\dim \Lin_n(S) - d_0 \geq \max\{k,n-k\} + 2^n - 2^k - 2^{n-k} + 1$.
\end{lemma}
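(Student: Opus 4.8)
The plan is to exploit the two classes of pairwise swappable letters identified in Lemma~\ref{lemma:descendingly-alternative-classes}(2): writing $w = xy$ with $x = ((y_1 x_2)\dots)x_k$ and $y = y_{n-k}(\dots(y_2 x_1))$, the letters $y_1, x_2, \dots, x_k$ form one swappable class (call its letter set $A$, with $|A| \le k$) and $x_1, y_2, \dots, y_{n-k}$ form the other (call its letter set $B$, with $|B| \le n-k$). Since $w \notin \Lin_{n-1}(S)$, Proposition~\ref{proposition:alternating-letters}(3) forces $|A| \ge \lceil \text{(number of letters in the $A$-class)}\rceil$ only in the sense that repetition is impossible, so in fact all $k$ letters $y_1, x_2, \dots, x_k$ are \emph{distinct}, and likewise all $n-k$ letters $x_1, y_2, \dots, y_{n-k}$ are distinct. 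The strategy is then to count, inside $\Lin_n(S)$, a large family of linearly independent (modulo lower degree) subwords obtained by forming products of sub-monomials on the two chains. Concretely, I would show that for every nonempty subset $I$ of the $A$-chain and every nonempty subset $J$ of the $B$-chain, an appropriately bracketed product of the letters indexed by $I\cup J$ is swap-equivalent to a ``canonical'' word whose leading term is not killed, and that distinct pairs $(I,J)$ give inequivalent leading terms; this yields at least $(2^k - 1)(2^{n-k}-1)$ words of length between $2$ and $n$, but we must assemble these into a count that telescopes correctly across all lengths.

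The cleaner route, and the one I would actually pursue, is an induction on $n$ using Corollary~\ref{corollary:new-elements}. The word $w = xy$ has the form $w = (x' s)\,y$ or similar for a one-letter extension; by Lemma~\ref{lemma:descendingly-alternative-classes}(1) and the swappability moves, peeling off the ``outermost'' letter of the shorter chain produces a word $w'$ of length $n-1$ of exactly the same structural type, still not in $\Lin_{n-2}(S)$ (here one must be a little careful: if peeling a letter landed us in lower degree, we could instead peel from the longer chain). Apply the inductive hypothesis to $w'$ to get $\dim \Lin_{n-1}(S) - d_0 \ge \max\{k', n-1-k'\} + 2^{n-1} - 2^{k'} - 2^{n-1-k'} + 1$, where $(k', n-1-k')$ is $(k-1, n-k)$ or $(k, n-1-k)$ depending on which chain we shortened. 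Then I must show the ``increment'' $\dim \Lin_n(S) - \dim \Lin_{n-1}(S) = d_n(S)$ is at least the difference of the two right-hand sides, i.e. roughly $2^{n-1} - 2^{k-1}$ (when we shortened the $A$-chain from $k$ to $k-1$, say). This increment bound should come from exhibiting $2^{n-1} - 2^{k-1}$ new words of length exactly $n$: take $w$ itself together with all words obtained by replacing a nonempty ``tail'' of the long $B$-chain by a product over a proper nonempty subset of that chain while keeping a full product over the $A$-chain — the distinct subsets give leading terms that are pairwise inequivalent because the $B$-letters are distinct and the $A$-block anchors them. The bookkeeping to see that these are genuinely independent modulo $\Lin_{n-1}(S)$ is the technical heart.

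The main obstacle is precisely this last independence/counting step: swappability only tells us words can be rearranged up to sign and lower-degree error, but to lower-bound $\dim \Lin_n(S)$ we need an honest supply of words whose leading monomials (in some fixed normal form — say, each word normalized so that the $A$-chain is left-bracketed and outermost, the $B$-chain right-bracketed and innermost, with letters in increasing index order) are distinct, hence linearly independent modulo lower degree. The delicate point is that two different subset-choices might normalize to the \emph{same} word if a repeated letter occurred — which is exactly why we first established that each chain consists of distinct letters. Granting that, distinct subsets $(I,J)$ of the two chains yield distinct normal forms, and a standard leading-term argument (as in the proof of Corollary~\ref{corollary:descendingly-alternative-bound}) shows the corresponding equivalence classes are linearly independent over $\F$ modulo $\Lin_{n-1}(S)$; combined with the fact that $w\notin\Lin_{n-1}(S)$ guarantees the ``top'' class $(I,J) = (\text{full}, \text{full})$ is nonzero, one reads off the claimed bound $\max\{k, n-k\} + 2^n - 2^k - 2^{n-k} + 1$ — the $\max\{k,n-k\}$ accounting for the length-$1$ through length-$\max\{k,n-k\}$ words along the longer chain, and the $2^n - 2^k - 2^{n-k} + 1 = (2^k-1)(2^{n-k}-1)$ accounting for the genuinely ``mixed'' words using letters from both chains.
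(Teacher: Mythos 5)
Your opening paragraph does identify the paper's actual family of witnesses: for nonempty $I\subseteq\{1,\dots,k\}$ and $J\subseteq\{1,\dots,n-k\}$ one forms a word $w_{I,J}$ of length $|I|+|J|$ from the selected letters of the two swappable classes, and the count $(2^k-1)(2^{n-k}-1)=2^n-2^k-2^{n-k}+1$ together with $d_1\geq\max\{k,n-k\}$ yields the bound. But the route you say you would actually pursue --- induction on $n$, charging the entire difference of the two bounds to the single increment $d_n(S)$ --- cannot work. That difference, $\bigl(\max\{k,n-k\}+2^n-2^k-2^{n-k}+1\bigr)-\bigl(\max\{k',n-1-k'\}+2^{n-1}-2^{k'}-2^{n-1-k'}+1\bigr)$, is of order $2^{n-2}$, whereas $d_n(S)$ can equal $1$: in Example~\ref{example:lower-bound} the product of all $n$ distinct generators satisfies the hypotheses of the lemma with $k=1$, yet $d_n=1$ there. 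The paper instead distributes the exponential count over \emph{all} the increments, proving $d_p\geq|W_p|=\sum_{l+m=p,\,l,m\geq1}\binom{k}{l}\binom{n-k}{m}$ for $2\leq p\leq n$ and summing. Note also that the words you offer as ``new words of length exactly $n$'' (full $A$-chain, proper nonempty subset of the $B$-chain) in fact have length strictly less than $n$, so they contribute to lower increments, not to $d_n$.

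The second gap is the independence argument itself, which you defer to ``a standard leading-term argument.'' In a non-associative algebra the $w_{I,J}$ are elements of $\A$, not formal monomials, so distinctness of normal forms does not by itself give linear independence modulo $\Lin_{p-1}(S)$. The paper's key device is that $xy$ is recovered from $w_{I,J}$ (up to sign and lower-degree terms) by applying the operators $L_{y_j}$, $R_{x_i}$ indexed by the complements of $J$ and $I$; applying those same operators to any $w_{I',J'}$ with $(I',J')\neq(I,J)$ and $|I'|+|J'|=|I|+|J|$ forces a repeated letter inside one swappable class, hence, by Proposition~\ref{proposition:alternating-letters}(2), an element of $\Lin_{n-1}(S)$. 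Consequently a linear dependence in $W_p$ modulo $\Lin_{p-1}(S)$ would push up to $xy\in\Lin_{n-1}(S)$, contradicting the hypothesis. This ``multiply the dependence back up to length $n$'' step is the technical heart of the lemma, and it is precisely what your sketch acknowledges but does not supply.
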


\begin{proof}
By Lemma~\ref{lemma:descendingly-alternative-classes}, $x_1, x_2, \dots, x_k$ are pairwise swappable, and $y_1, y_2, \dots, y_{n-k}$ are also pairwise swappable. Let now $l, m \in \mathbb{N}$, $1 \leq l \leq k$ and $1 \leq m \leq n-k$. We consider two arbitrary sequences of indices $1 \leq i_1 < \dots < i_l \leq k$ and $1 \leq j_1 < \dots < j_m \leq n-k$, and denote them by $I$ and $J$. We also denote $|I| = l$ and $|J| = m$. Then we set $w_{I,J} = (((y_{j_1} x_{i_2}) \dots) x_{i_l}) \cdot (y_{j_m} (\dots (y_{j_2} x_{i_1}))) \in S^{l+m}$. We will show that for any $2 \leq p \leq n$ the set
$$
W_p = \{ w_{I,J} \; | \; \text{all } I, J \text{ such that } |I| + |J| = p \}
$$
is linearly independent modulo $\Lin_{p-1}(S)$. It then follows that $d_p = d_p(S) = \dim \Lin_p(S) - \dim \Lin_{p-1}(S) \geq |W_p|$, so 
$$
d_2 + \dots + d_n \geq |W_2| + \dots + |W_n| = (2^k - 1)(2^{n-k} - 1) = 2^n - 2^k - 2^{n-k} + 1.
$$
Here $|W_2| + \dots + |W_n|$ is the cardinality of the set $\{ w_{I,J} \; | \; I, J \text{ are nonempty} \}$, and $2^k - 1$ and $2^{n-k} - 1$ are the numbers of all nonempty subsets in $\{ 1, \dots, k \}$ and $\{ 1, \dots, n - k \}$, respectively. Similarly to Corollary~\ref{corollary:descendingly-alternative-bound}, we may assume that $|S| = d_1$. Since $xy \notin \Lin_{n-1}(S)$, Proposition~\ref{proposition:alternating-letters}(3) then implies that we also have $d_1 \geq \max\{ k, n-k\}$. Therefore, 
$$
\dim \Lin_n(S) = d_0 + d_1 + d_2 + \dots + d_n \geq d_0 + \max\{ k, n-k \} + 2^n - 2^k - 2^{n-k} + 1,
$$
as desired.

Given $I$ and $J$, let us denote $\{ i_{l+1}, \dots, i_k \} = \{ 1, \dots, k \} \setminus I$ and $\{ j_{m+1}, \dots, j_{n-k} \} = \{ 1, \dots, n-k \} \setminus J$. Similarly to the proofs of Proposition~\ref{proposition:alternating-letters}(1) and Lemma~\ref{lemma:descendingly-alternative-classes}, one can show that
\begin{align*}
xy &{}= (((y_1 x_2) \dots) x_k) \cdot (y_{n-k} (\dots (y_2 x_1))) \\
&{}\sim \pm (((y_{j_1} x_{i_2}) \dots) x_{i_k}) \cdot (y_{j_{n-k}} (\dots (y_{j_2} x_{i_1}))) \\
&{}\sim \pm L_{y_{j_{n-k}}} \dots L_{y_{j_{m+1}}} ((((y_{j_1} x_{i_2}) \dots) x_{i_k}) \cdot (y_{j_{m}} (\dots (y_{j_2} x_{i_1})))) \\
&{}\sim \pm L_{y_{j_{n-k}}} \dots L_{y_{j_{m+1}}} R_{x_{i_k}} \dots R_{x_{i_{l+1}}} ((((y_{j_1} x_{i_2}) \dots) x_{i_l}) \cdot (y_{j_m} (\dots (y_{j_2} x_{i_1})))) \\
&{}= \pm L_{y_{j_{n-k}}} \dots L_{y_{j_{m+1}}} R_{x_{i_k}} \dots R_{x_{i_{l+1}}} w_{I,J}.
\end{align*}
Assume now that $I', J'$ are distinct from $I, J$ and $|I'| + |J'| = |I| + |J|$. We can apply the converse transformations to $u_{I',J'} = L_{y_{j_{n-k}}} \dots L_{y_{j_{m+1}}} R_{x_{i_k}} \dots R_{x_{i_{l+1}}} w_{I',J'}$ to show that $u_{I',J'}$ necessarily contains at least two equal letters which belong to the same class of pairwise swappable elements. Indeed, if $|I'| \geq |I|$ and $I' \neq I$, then there exists $i \in I' \setminus I$, so $x_i$ occurs in $u_{I',J'}$ twice: both in $w_{I',J'}$ and in $R_{x_i}$. Otherwise, we have $|J'| \geq |J|$ and $J' \neq J$, so some $y_j$ occurs both in $w_{I',J'}$ and in $L_{y_j}$. Hence, by Proposition~\ref{proposition:alternating-letters}(2), we have $u_{I',J'} \in \Lin_{n-1}(S)$.

Finally, assume that $W_p$ is linearly dependent modulo $\Lin_{p-1}(S)$. Then there exist $I, J$ such that $|I| + |J| = p$ and
$$
w_{I,J} = \sum_{\substack{|I'| + |J'| = p\\ (I',J') \neq (I,J)}} \alpha_{I',J'} w_{I',J'} + z,
$$
where $\alpha_{I',J'} \in \F$ and $z \in \Lin_{p-1}(S)$. Therefore, we have
\begin{align*}
xy &{}\sim \pm L_{y_{j_{n-k}}} \dots L_{y_{j_{m+1}}} R_{x_{i_k}} \dots R_{x_{i_{l+1}}} w_{I,J} \\
&{}= \pm \sum_{\substack{|I'| + |J'| = p\\ (I',J') \neq (I,J)}} \alpha_{I',J'} u_{I',J'} \pm L_{y_{j_{n-k}}} \dots L_{y_{j_{m+1}}} R_{x_{i_k}} \dots R_{x_{i_{l+1}}} z \in \Lin_{n-1}(S),
\end{align*}
so $xy \in \Lin_{n-1}(S)$, a contradiction.
\end{proof}

\begin{theorem} \label{theorem:descendingly-alternative-length}
Let $n = l(\A) \geq 2$. Then $\dim \A - d_0 \geq 2^{n-1} + n - 2$.
\end{theorem}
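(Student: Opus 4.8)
The plan is to produce one word of length $n$ that survives modulo $\Lin_{n-1}(S)$, bring it to the canonical two-factor shape supplied by Lemma~\ref{lemma:descendingly-alternative-classes}(1), invoke Lemma~\ref{lemma:descendingly-alternative-length}, and then dispatch an elementary inequality in the split parameter $k$.

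First I would fix a generating set $S$ of $\A$ with $l(S)=n$, which exists by Definition~\ref{definition:algebra-length}. By Proposition~\ref{proposition:differences-length}(1) we have $d_n(S)\neq 0$, i.e.\ $\Lin_n(S)\neq\Lin_{n-1}(S)$. Since a descendingly alternative algebra is mixing (Proposition~\ref{proposition:descending-are-mixing}), Corollary~\ref{corollary:new-elements} gives $\Lin_n(S)=\Lin_{n-1}(S)+\Lin(S^{(n)})$, so some $w\in S^{(n)}$ satisfies $w\notin\Lin_{n-1}(S)$; in particular $l(w)=n$. By Lemma~\ref{lemma:descendingly-alternative-classes}(1) there is a word $xy=\bigl(((a_1a_2)\cdots)a_k\bigr)\cdot\bigl(b_{n-k}(\cdots(b_2b_1))\bigr)$ with $1\leq k\leq n-1$ and $w\sim\pm xy$; by Proposition~\ref{proposition:equivalence:properties}(2), $w\notin\Lin_{n-1}(S)$ forces $xy\notin\Lin_{n-1}(S)$. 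Relabelling the letters of the first swappable class $b_1,a_2,\dots,a_k$ as $x_1,x_2,\dots,x_k$ and those of the second class $a_1,b_2,\dots,b_{n-k}$ as $y_1,y_2,\dots,y_{n-k}$ (cf.\ Lemma~\ref{lemma:descendingly-alternative-classes}(2)), the word $xy$ is exactly of the form required in Lemma~\ref{lemma:descendingly-alternative-length}.

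Applying Lemma~\ref{lemma:descendingly-alternative-length} to $xy\in S^n\setminus\Lin_{n-1}(S)$ and using that $S$ is generating, so $\Lin_n(S)=\Lin_\infty(S)=\A$, yields
$$
\dim\A-d_0\ \geq\ \max\{k,n-k\}+2^n-2^k-2^{n-k}+1 .
$$
It then remains to check that the right-hand side is at least $2^{n-1}+n-2$ for every integer $k$ with $1\leq k\leq n-1$. Using the symmetry $k\leftrightarrow n-k$ I may assume $k\leq n/2$, so $\max\{k,n-k\}=n-k$ and the inequality reduces to $\phi(k):=2^{n-1}-2^k-2^{n-k}-k+3\geq 0$. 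A direct check gives $\phi(1)=0$, and for $2\leq k\leq n/2$ one has $\phi(k)-\phi(k-1)=2^{n-k}-2^{k-1}-1\geq 0$ since $n-k\geq k$; hence $\phi$ is nondecreasing on $\{1,\dots,\lfloor n/2\rfloor\}$ and stays $\geq 0$, which proves the claim.

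The substantive content of the argument sits entirely in Lemma~\ref{lemma:descendingly-alternative-length}; the only real obstacle here is the bookkeeping when matching the canonical form of Lemma~\ref{lemma:descendingly-alternative-classes}(1) to the indexing convention of Lemma~\ref{lemma:descendingly-alternative-length}, where the two ``crossover'' letters $a_1$ and $b_1$ must be reassigned to the opposite swappability classes, followed by the short monotonicity check for $\phi$. Finally, the value $k=1$ (equivalently $k=n-1$) gives equality in the displayed estimate, so the bound is sharp in $n$ and is consistent with Example~\ref{example:lower-bound}.
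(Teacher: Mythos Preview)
Your proof is correct and follows essentially the same approach as the paper: produce a generating set of maximal length, extract a canonical two-factor word via Corollary~\ref{corollary:new-elements} and Lemma~\ref{lemma:descendingly-alternative-classes}(1), apply Lemma~\ref{lemma:descendingly-alternative-length}, and minimize over the split parameter~$k$ via a monotonicity computation. You are slightly more explicit than the paper in matching the labeling conventions of Lemma~\ref{lemma:descendingly-alternative-classes}(1) with those of Lemma~\ref{lemma:descendingly-alternative-length}, and your $\phi(k)$ check is the same increment argument as the paper's $f(n,k+1)-f(n,k)$ computation, just normalized differently.
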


\begin{proof}
Choose a generating system $S$ for $\A$ such that $l(S) = l(\A)$. Then 
$$
\A = \Lin_n(S) \neq \Lin_{n-1}(S),
$$
so, by Corollary~\ref{corollary:new-elements} and Lemma~\ref{lemma:descendingly-alternative-classes}(1), there exists 
$$
xy = (((x_1 x_2) \dots) x_k) \cdot (y_{n-k} (\dots (y_2 y_1))) \in S^n \setminus \Lin_{n-1}(S).
$$
We may assume without loss of generality that $k \leq n - k$, i.e., $1 \leq k \leq \lfloor \tfrac{n}{2} \rfloor$. By Lemma~\ref{lemma:descendingly-alternative-length}, we have $\dim \A - d_0 = \dim \Lin_n(S) - d_0 \geq 2^n - 2^k - 2^{n-k} + n - k + 1 = f(n,k)$. Now note that for $k \leq \lfloor \tfrac{n}{2} \rfloor - 1$ it holds that $n - k - 1 > k$, so
$$
    f(n,k+1) - f(n,k) = (2^k + 2^{n-k} + k) - (2^{k+1} + 2^{n-k-1} + k + 1) = 2^{n-k-1} - 2^k - 1 \geq 2 - 1 > 0.
$$
Hence $f(n,k)$ is an increasing function in $k$ for $1 \leq k \leq \lfloor \tfrac{n}{2} \rfloor$, and thus its minimum is achieved when $k = 1$. We have $f(n,1) = 2^n - 2^{n-1} + n - 2 - 1 + 1 = 2^{n-1} + n - 2$, so $\dim \A - d_0 \geq 2^{n-1} + n - 2$.
\end{proof}

The following corollary provides a weaker but more conveninent form of Theorem~\ref{theorem:descendingly-alternative-length}.

\begin{corollary} \label{corollary:descendingly-alternative-length}
Let $\dim \A - d_0 \geq 3$. Then $l(\A) \leq \lceil \log_2(\dim \A - d_0) \rceil$.
\end{corollary}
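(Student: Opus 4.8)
The plan is to deduce Corollary~\ref{corollary:descendingly-alternative-length} directly from Theorem~\ref{theorem:descendingly-alternative-length} by a routine estimate on $n = l(\A)$. First I would dispose of the small cases: if $l(\A) \leq 1$, then since $\dim \A - d_0 \geq 3$ we have $\lceil \log_2(\dim \A - d_0) \rceil \geq \lceil \log_2 3 \rceil = 2 \geq l(\A)$, so the inequality holds trivially. Hence it suffices to treat $n = l(\A) \geq 2$, where Theorem~\ref{theorem:descendingly-alternative-length} applies and gives $\dim \A - d_0 \geq 2^{n-1} + n - 2$.

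Next I would argue by contradiction: suppose $n \geq \lceil \log_2(\dim \A - d_0) \rceil + 1$. Writing $N = \dim \A - d_0 \geq 3$, this means $n - 1 \geq \lceil \log_2 N \rceil \geq \log_2 N$, hence $2^{n-1} \geq N$. Combining with the theorem, $N \geq 2^{n-1} + n - 2 \geq N + n - 2$, which forces $n \leq 2$. So the only way the assumed inequality $n \geq \lceil \log_2 N \rceil + 1$ can hold is $n = 2$ and $\lceil \log_2 N \rceil = 1$, i.e.\ $N \leq 2$; but this contradicts $N \geq 3$. Therefore $n \leq \lceil \log_2(\dim \A - d_0) \rceil$, as claimed.

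There is no real obstacle here — the content is entirely in Theorem~\ref{theorem:descendingly-alternative-length}, and the corollary is just a logarithmic repackaging. The only point requiring a moment of care is making sure the case split ($l(\A) \leq 1$ versus $l(\A) \geq 2$) is handled cleanly, since the theorem has the hypothesis $n \geq 2$; and checking that the chain $N \geq 2^{n-1} + n - 2 \geq 2^{n-1}$ together with $2^{n-1} \geq N$ (from the contradiction hypothesis) genuinely collapses, using $n - 2 \geq 0$. One could alternatively phrase it without contradiction: from $N \geq 2^{n-1} + n - 2 \geq 2^{n-1}$ (valid for $n \geq 2$) we get $\log_2 N \geq n - 1$, so $n \leq \log_2 N + 1 \leq \lceil \log_2 N \rceil + 1$, and then a separate inspection rules out equality when $N \geq 3$ — but the direct bound $n - 1 \leq \log_2 N$ already yields $n \leq \lceil \log_2 N \rceil$ once one observes that $n-1$ is an integer, so $n - 1 \leq \lfloor \log_2 N \rfloor \leq \lceil \log_2 N \rceil$. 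I would use this last, cleanest version: integrality of $n-1$ lets us replace $\log_2 N$ by its floor, which is at most its ceiling, finishing the proof in one line after the theorem is invoked.
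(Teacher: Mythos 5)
Your argument by contradiction is correct and is essentially the paper's own proof: the paper sets $n = \lceil \log_2(\dim \A - d_0) \rceil \geq 2$, assumes $l(\A) \geq n+1$, and derives $2^n \geq \dim \A - d_0 \geq 2^n + (n+1) - 2 \geq 2^n + 1$ from Theorem~\ref{theorem:descendingly-alternative-length} --- exactly the collapse you describe, with the roles of the two quantities named the other way around. Your preliminary disposal of $l(\A) \leq 1$ is a reasonable extra precaution given that the theorem carries the hypothesis $l(\A) \geq 2$.

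However, the ``cleanest version'' that you say you would actually write down does not work. From $N \geq 2^{n-1}$ and the integrality of $n-1$ you obtain $n-1 \leq \lfloor \log_2 N \rfloor \leq \lceil \log_2 N \rceil$, which only yields $n \leq \lceil \log_2 N \rceil + 1$, not the claimed $n \leq \lceil \log_2 N \rceil$. Passing from the floor to the ceiling gains you nothing precisely when $N$ is a power of $2$, since then $\lfloor \log_2 N \rfloor = \lceil \log_2 N \rceil$: for instance, with $N = 4$ the inequality $N \geq 2^{n-1}$ alone still permits $n = 3 > 2 = \lceil \log_2 4 \rceil$. To exclude $n = \lceil \log_2 N \rceil + 1$ you genuinely need the additive term $n - 2 \geq 1$ in the bound $2^{n-1} + n - 2$, which is exactly what your contradiction argument (and the paper's) uses. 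Keep that version and discard the one-line shortcut.
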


\begin{proof}
We denote $n = \lceil \log_2(\dim \A - d_0) \rceil \geq 2$. Assume that $l(\A) \geq n + 1$. Then, by Theorem~\ref{theorem:descendingly-alternative-length}, we have
$$
2^n \geq \dim \A - d_0 \geq 2^n + (n + 1) - 2 \geq 2^n + 1,
$$
a contradiction. Therefore, $l(\A) \leq n$.
\end{proof}

\section{Lengths of descendingly flexible algebras} \label{section:sequences-flexible}

\subsection{Explicit form of words in descendingly flexible algebras}

In this section we assume that $\A$ is a descendingly flexible algebra over an arbitrary field~$\F$.

\begin{definition}
Let $S \subseteq \A$. We say that $x \in S^{(m)}$ is {\em left swapping} if 
$$
x = L_{x_m} R_{x_{m-1}} L_{x_{m-2}} \dots x_1
$$
for some $x_1, \dots, x_m \in S$, and $x$ is {\em right swapping} if 
$$
x = R_{x_m} L_{x_{m-1}} R_{x_{m-2}} \dots x_1.
$$
In this case $x_1$ is called {\em the inner letter} of $x$, and $x_2, \dots, x_m$ are called {\em the outer letters} of~$x$.
\end{definition}

\begin{lemma} \label{lemma:descendingly-flexible-bound}
Let $S \subseteq \A$ and $w \in S^{(m)}$ for some $m \geq 3$. Then
\begin{enumerate}[{\rm (1)}]
    \item The word $w$ is equivalent to $\pm (xy)z$ or $\pm z'(y'x')$, where $x, y', z'$ are left swapping, and $x', y, z$ are right swapping.
    \item The word $(xy)z$ or $z'(y'x')$ is obtained from $w$ by the permutation of letters, and this permutation preserves the classes of pairwise swappable letters (p.s.l.) in these words.
    \item The word $(xy)z$ or $z'(y'x')$ can be chosen in such a way that one of the following conditions holds (here $l(x)$ denotes the length of either $x$ or $x'$, and similarly for $l(y)$ and~$l(z)$): 
    \begin{itemize}
        \item[{\rm(EOO)}] $l(x)$ is even, $l(y)$, $l(z)$ are odd;
        \item[{\rm(OEE)}] $l(x)$ is odd, $l(y)$, $l(z)$ are even;
        \item[{\rm(O11)}] $l(x)$ is odd, $l(y) = l(z) = 1$;
        \item[{\rm(OO)}] $l(x) = 1$, $l(y)$ is even, $l(z)$ is odd, i.e., $w \sim \pm uv$, where $u = xy$ (or $u = z'$) is left swapping of odd length, $v = z$ (or $v = y'x'$) is right swapping of odd length, and at least one of the values $l(u)$ and $l(v)$ is greater than $1$;
        \item[{\rm(OE)}] $l(x)$, $l(y)$ are odd with $l(y) \geq 3$, $l(z) = 1$, i.e., either $w \sim \pm (xy)z \sim \mp (zy)x = \mp uv$ or, symmetrically, $w \sim \pm z'(y'x') \sim \mp x'(y'z') = \mp v'u'$. Here $u = zy$ is left swapping of even length with $l(u) \geq 4$, and $v = x$ is left swapping of odd length. Similarly, $u' = y'z'$ is right swapping of even length with $l(u') \geq 4$, and $v' = x'$ is right swapping of odd length.
    \end{itemize}
    Each of the first three types is coded by the sequence of symbols which describe the lengths of $l(x)$, $l(y)$ and $l(z)$ as {\em E} --- even, {\em O} --- odd, or $1$ --- equal to $1$. Following this notation, the last two types should be encoded as {\rm(1EO)} and {\rm(OO1)}, respectively. However, they are more easily described as a product of only two left or right swapping subwords~$u$ and~$v$, and Proposition~\ref{proposition:pulling-out} together with Lemma~\ref{lemma:descendingly-flexible-swapping-subwords} show that it is more convenient to operate with two subwords instead of three of them. Hence we use only two letters to encode the last two types.
    \item There are at most three classes of p.s.l. in $w$.
    
    Namely, in the first three cases of~(3) we can divide the letters of $(xy)z$ or $z'(y'x')$ into three types I, II and III such that all letters of each type are pairwise swappable, i.e., each type is completely contained in some class of p.s.l. In cases {\rm(EOO)} and {\rm(O11)} they are described by Table~\ref{table:types}(a), and in case {\rm(OEE)} we have Table~\ref{table:types}(b).

    In the last two cases it is sufficient to use only two types I and II, and both in {\rm(OO)} and {\rm(OE)} we have Table~\ref{table:types}(c).

    \begin{table}[H]
        \begin{minipage}{0.35\linewidth}
        \centering
        \begin{tabular}{|r|c|c|c|}
        \hline
        & $x/x'$ & $y/y'$ & $z/z'$ \\\hline
        outer & II/-- & III/-- & I/-- \\\hline
        inner & I & II & III \\\hline
        \end{tabular}
        \subcaption{First case}
        \end{minipage}
        \begin{minipage}{0.35\linewidth}
        \centering
        \begin{tabular}{|r|c|c|c|}
        \hline
        & $x/x'$ & $y/y'$ & $z/z'$ \\\hline
        outer & III/-- & I & II \\\hline
        inner & I & II & III \\\hline
        \end{tabular}
        \subcaption{Second case}
        \end{minipage}
        \begin{minipage}{0.27\linewidth}        
        \centering
        \begin{tabular}{|r|c|c|}
        \hline
        & $u/u'$ & $v/v'$ \\\hline
        outer & I/-- & II/-- \\\hline
        inner & II & I \\\hline
        \end{tabular}
        \subcaption{Third case}
        \end{minipage}
        \caption{Division of letters into types} \label{table:types}
        \end{table}
    Here $A/B$ denotes that one of the two possibilities $A$ or $B$ holds, and ``--'' means that the set of outer letters of the given word is empty.
    \item The largest class of p.s.l. contains at least $\lfloor \tfrac{m}{3} \rfloor + 1$ elements.
\end{enumerate}
\end{lemma}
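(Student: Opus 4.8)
The plan is to establish the five assertions in order; part~(1) carries essentially all of the structural content, and (2)--(5) are bookkeeping built on top of it. For part~(1) I would argue by induction on~$m$. For $m=3$ a direct check settles it: up to relabelling of letters, $S^{(3)}$ consists of $(ab)c$, $c(ab)$, $a(bc)$, $(bc)a$, and each already has one of the two required shapes with all swapping factors of length~$1$. For the step, write $w\in S^{(m+1)}=S^{(m)}\cdot S\cup S\cdot S^{(m)}$ as $w=w's$ or $w=sw'$ with $w'\in S^{(m)}$ and apply the inductive hypothesis to~$w'$. Two elementary observations drive the argument: first, $L_s u$ is left swapping of length $l(u)+1$ whenever $u$ is right swapping, and dually $R_s u$ is right swapping when $u$ is left swapping; second, the equivalences~\eqref{equation:general-equivalence-1}--\eqref{equation:general-equivalence-2} flip the outer block of a product $(pq)r$ or $p(qr)$, which lets the new letter~$s$ be pushed to the outside of one of the swapping factors. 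Running through the (at most) four cases --- left versus right multiplication, times the two shapes of~$w'$ --- one absorbs~$s$ into the appropriate factor, lengthening it by one and possibly switching between $(xy)z$ and $z'(y'x')$, while keeping the $L/R$-alternation of each factor consistent. I expect this case analysis to be the main obstacle of the whole lemma, since one must keep track simultaneously of which of the two shapes is current and of the parity of each of the three (or two) factors.

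Parts~(2) and~(3) are read off the same computation. Every rewriting used is either elementary in the sense of Definition~\ref{definition:formal-equivalence} (of the form~\eqref{equation:general-equivalence-1} or~\eqref{equation:general-equivalence-2}) or a multiplication by a single letter, and each such step only permutes the letters of~$w$ and respects the partition into swappable classes; this gives~(2). For~(3), starting from the form produced in~(1), one repeatedly peels the outermost letter off whichever of the three swapping factors has the ``wrong'' length parity and re-attaches it to another factor, using one application of an equivalence of type~\eqref{equation:general-equivalence-1} or~\eqref{equation:general-equivalence-2} each time; this process clearly terminates and drives $(l(x),l(y),l(z))$ into exactly one of the five listed patterns, collapsing to the two-factor description $w\sim\pm uv$ in the cases $(OO)$ and $(OE)$.

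For part~(4), one first shows, exactly as in the proof of Lemma~\ref{lemma:descendingly-alternative-classes}(2) (applying~\eqref{equation:general-equivalence-1}--\eqref{equation:general-equivalence-2} repeatedly inside a single factor), that all outer letters of any one swapping factor are pairwise swappable. A short list of cross-factor swaps then links the factors: for instance in $(xy)z$ the equivalence $(xy)z\sim-(zy)x$ places the outer letters of $z$ next to the inner letter of $x$, and analogously the inner letter of $y$ next to the outer letters of $x$. Together with the transitivity of~$\leftrightarrow$ from Proposition~\ref{proposition:alternating-letters}(1), this shows that the letters of $w$ split into the three ``types'' recorded in Table~\ref{table:types}, each type contained in a single class; hence $w$ has at most three classes of pairwise swappable letters.

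Finally, part~(5) follows from~(4) by counting. Since every type of Table~\ref{table:types} lies inside a class, the largest class has at least as many letters as the largest type, and reading the tables the type sizes are $l(x),l(y),l(z)$ (in some order) in cases $(EOO)$, $(OEE)$, $(O11)$ and $l(u),l(v)$ in cases $(OO)$, $(OE)$, with total~$m$ in every case. Hence the largest class has at least $\lceil m/3\rceil$ letters, which equals $\lfloor m/3\rfloor+1$ when $3\nmid m$. When $3\mid m$: in the two-factor cases the largest class has at least $\lceil m/2\rceil\ge\lfloor m/3\rfloor+1$ letters; in the three-factor cases the three lengths cannot all equal $m/3$, since in $(EOO)$ exactly one of them is even and two are odd, in $(OEE)$ exactly one is odd and two are even, and in $(O11)$ two of them equal~$1$ (so all three coincide only when $m=3$, a case one checks directly), and three nonnegative integers summing to~$m$ that are not all equal must have a term at least $\lfloor m/3\rfloor+1$. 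Thus in every case the largest class contains at least $\lfloor m/3\rfloor+1$ letters.
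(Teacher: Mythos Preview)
Your outline follows the paper's approach closely and is essentially correct for parts~(1), (2), (4) and~(5). A few remarks.

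In part~(1) you anticipate ``the main obstacle of the whole lemma'', but in fact the induction step is short: for $w'\sim\pm(xy)z$ one computes
\[
w's=((xy)z)s\sim -(sz)(xy),\qquad sw'=s((xy)z)\sim -s((zy)x)\sim x((zy)s)\sim -x((sy)z),
\]
and observes that $sz$, $sy$ are left swapping; the symmetric shape is handled symmetrically. The genuine work lies in~(3), not~(1).

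Your sketch of~(3) is where there is a real gap. You write that one ``peels the outermost letter off whichever of the three swapping factors has the wrong length parity and re-attaches it to another factor'', and that ``this process clearly terminates''. Neither claim is justified. The inductive mechanism from~(1) only inserts letters into $y$ or $z$ (swapping the roles of the other two factors), so its inverse does not let you freely shuttle letters between arbitrary factors while preserving the canonical $(xy)z$ or $z'(y'x')$ shape with the correct left/right swapping types. The paper instead pulls \emph{pairs} of outer letters of $y$ and of $z$ completely outside the word until $l(y''),l(z'')\in\{1,2\}$, performs a four-case analysis on $(l(y''),l(z''))$ to reach one of the shapes (EOO), (OO), or a temporary (OOO), and then pushes the pulled letters back in; a second pass reduces (OOO) to (OEE), (O11) or (OE). You should expect to carry out this reduction explicitly rather than asserting termination.

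For part~(5), your argument matches the paper's: three types summing to $m$ force a type of size at least $\lceil m/3\rceil$, and when $3\mid m$ the parity constraints in (EOO)/(OEE), the degeneracy in (O11), and the two-type structure in (OO)/(OE) each rule out the equal split. The $m=3$ case is handled by noting that then there are in fact only two classes.
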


\begin{proof}
\leavevmode
\begin{enumerate}[{\rm (1)}]
    \item We prove the statement by induction on $m$. The induction base is clearly true. In order to prove the induction step, it is sufficient to show that if $w \in S^{m}$ is an arbitrary word of the given form, then for any $s \in S$ the words $sw$ and $ws$ are equivalent to some words of the same form from $S^{m+1}$. We may assume without loss of generality that $w = (xy)z$, since the case $w = z'(y'x')$ is symmetric to it. Then, by Eqs.~\eqref{equation:general-equivalence-1} and~\eqref{equation:general-equivalence-2}, we have $ws = ((xy)z)s \sim -(sz)(xy)$ and $sw = s((xy)z) \sim -s((zy)x) \sim x((zy)s) \sim - x((sy)z)$. Since $sz$ and $sy$ are left alternative, these words have the desired form.

    Note that, when a new letter is added to $z$, the subwords $x$ and $y$ change their roles, and $z$ keeps its role. Similarly, when a new letter is added to $y$, the subwords $x$ and $z$ change their roles, and $y$ keeps its role. Normally, we can add letters only to $y$ or to $z$.
    \item This statement immediately follows from (1) and Definition~\ref{definition:swappable}.
    \item By~(1), we may assume without loss of generality that $w \sim \pm (xy)z$. We denote $x = L_{x_1} R_{x_2} L_{x_3} \dots x_j$, $y = R_{y_1} L_{y_2} R_{y_3} \dots y_k$ and $z = R_{z_1} L_{z_2} R_{z_3} \dots z_l$. If $k = l(y) \geq 3$, then $y = (y_2 y')y_1$, where $y' = R_{y_3} \dots y_k$ is again a right swapping word, and we can pull out the letters $y_1$ and $y_2$ from $w$ in the following way:
    $$
    w \sim \pm (xy)z = \pm (x((y_2 y') y_1)) z \sim \mp (x((y_2 y') z))y_1 \sim \pm (y_2((x y') z))y_1.
    $$
    We can proceed similarly with $(xy')z$ until we obtain $(xy'')z$ with $l(y'') \in \{ 1, 2 \}$. Similarly, if $l(z) \geq 3$, we can pull out the outer letters of $z$ from $w$ until we have $(xy'')z''$ with $l(z'') \in \{ 1, 2 \}$. We now consider four cases:
    \begin{itemize}
        \item $l(y'') = l(z'') = 1$. If $l(x)$ is even, then we have the case (EOO). The other case is temporarily denoted by (OOO).
        \item $l(y'') = 1$, $l(z'') = 2$. Then we have 
        $$
        w \sim \pm (xy'')(z_l z_{l-1}) \sim \mp z_{l-1}(z_l (xy'')).
        $$
        It remains to apply the previous case for $xy''$, $z_l$ and $z_{l-1}$.
        \item $l(y'') = 2$, $l(z'') = 1$. If $l(x) = 1$, then we have the case (OO). Otherwise, we can decompose $x = x_1 x'$, where $x'$ is right swapping, and obtain 
        $$
        w \sim \pm ((x_1x')y'')z'' \sim \mp (z'' y'')(x_1 x').
        $$
        Then $z'' y''$ is left swapping of length $3$, so we can apply the first case to $x'$, $x_1$ and $z''y''$.
        \item $l(y'') = l(z'') = 2$. Then 
        $$
        w \sim \pm (xy'')(z_l z_{l-1}) \sim \mp ((z_l z_{l-1}) y'')x \sim \pm ((y'' z_{l-1}) z_l)x \sim (x z_l)(y'' z_{l-1}).
        $$
        Then $y'' z_{l-1}$ is right swapping of length $3$, so we can apply the first case to $x$, $z_l$ and $y'' z_{l-1}$.
    \end{itemize}
    Then we push the letters from $y$ and $z$ back in, and after this procedure we are still in one of the cases (EOO), (OOO) or (OO), since it preserves the parity of $l(y)$ and $l(z)$. We now show that (OOO) leads to one of the cases (OEE), (O11) or (OE).

    If $l(y) = l(z) = 1$, then we already have the case (O11). Assume now the contrary. If $l(y) \geq 2$, then we can pull out one letter from $y$, and if $l(z) \geq 2$, we can do the same with $z$. In both cases we obtain a word with an even number of letters, since the parity of the length has changed. We have already shown that this word belongs to one of the types (EOO), (OOO) or (OO), but the type (OOO) has an odd number of letters, so it is impossible. Then we push the letter from $y$ or $z$ back in, and after that (EOO) is transformed into (OEE), while (OO) is transformed into (OE).
    
    \item We may assume without loss of generality that $w \sim \pm (xy)z$. We first consider the case (OEE). Then, proceeding as in the proof of~(3), we can pull out the outer letters of $y$ and $z$ until we obtain $(xy'')z''$ with $l(y'') = l(z'') = 2$. Due to transitivity of the swappability relation, all outer letters of $y$ are pairwise swappable, so they belong to the same type as $y_{k-1}$. Similarly, all outer letters of $z$ are pairwise swappable, and they belong to the same type as $z_{l-1}$. It remains to prove that the letters of $(xy'')z''$ are divided into types according to Table~\ref{table:types}(b). We have already shown in the proof of~(3) that we can obtain a subword $y'' z_{l-1}$, so $y_k \leftrightarrow z_{l-1}$, and a subword $x z_l$, so $x_1 \leftrightarrow z_l$ whenever $l(x) \geq 2$. Besides, we can pull out the outer letters of $x$ from a subword $x y''$ until we obtain $x_j y''$, and thus $x_j \leftrightarrow y_{k-1}$.

    If we have the case (O11) or $l(y) = l(z) = 1$ in (EOO), then the sets of outer letters for $y$ and $z$ are empty, so we immediately obtain Table~\ref{table:types}(a). Assume now that $l(y) \geq 3$ in (EOO). Then $y = y' y_1$, so we can pull $y_1$ out and obtain $w \sim \pm (x(y' y_1))z \sim \mp (x(y' z))y_1$. Then $x(y'z)$ is of the type (OEE), so it satisfies Table~\ref{table:types}(b) with the roles of $x$ and $z$ interchanged. All outer letters of $y$ are pairwise swappable, so $y_1$ belongs to the same type as the set of outer letters of $y'$ which is nonempty because $l(y') \geq 2$. Therefore, $(xy)z$ satisfies Table~\ref{table:types}(a). The case when $l(z) \geq 3$ is considered similarly.

    We now consider the case (OO). Then $w \sim \pm (xy)z = \pm uv$, where $u = xy$ and $v = z$. Clearly, all outer letters of $u$ are pairwise swappable, and, similarly to the proof of~(3), we can pull out the outer letters of $v = z$ until we obtain $l(z'') = 1$. Then we have $(xy)z'' = uz_l$, so the inner letter of $v$ is swappable with the outer letters of $u$. If $l(v) = 1$, then we have already shown that $uv$ satisfies Table~\ref{table:types}(c), and if $l(v) \geq 3$, then we can change the roles of $u$ and $v$ to show that the inner letter of $u$ is swappable with the outer letters of $v$.

    Now only the case (OE) remains. We have $w \sim \pm (xy)z \sim \mp uv$ with $u = zy$ and $v = x$. Clearly, $xy$ is of the type (OO), so it satisfies Table~\ref{table:types}(c) with $x$ and $y$ instead of $u$ and~$v$. Since $l(y) \geq 3$, the letter $z$ is swappable with the set of outer letters of $y$ which is nonempty, and thus $uv$ also satisfies Table~\ref{table:types}(c).
    
    \item If $3 \centernot\mid m$, then this statement immediately follows from (4). Assume now that $3 \mid m$, and all classes of p.s.l. contain less than $\tfrac{m}{3} + 1$ elements, i.e., there are three of them, and each one contains exactly $p = \tfrac{m}{3}$ elements. If $p = 1$, then $m = 3$, so there are only two classes of p.s.l. in this case, a contradiction. If $p \geq 2$, then this is possible only in cases (EOO) and (OEE), but then we obtain a contradiction with the fact that the sizes of all three classes have the same parity. \qedhere
\end{enumerate}
\end{proof}

Lemma~\ref{lemma:descendingly-flexible-bound}(5) has an immediate corollary which provides a linear bound on the length of a descendingly flexible algebra.

\begin{corollary} \label{corollary:descendingly-flexible-bound}
Let $S \subseteq \A$. Then $\Lin_{3d_1(S)}(S) = \Lin_{3d_1(S)-1}(S)$, so $l(S) \leq 3d_1(S) - 1$.
\end{corollary}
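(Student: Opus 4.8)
The plan is to follow the same line of argument as in the proof of Corollary~\ref{corollary:descendingly-alternative-bound}, with part~(5) of Lemma~\ref{lemma:descendingly-flexible-bound} taking over the role that Lemma~\ref{lemma:descendingly-alternative-classes}(2) played there (``at most two classes of pairwise swappable letters'' is replaced by ``a large class of pairwise swappable letters''). First I would reduce to the case $|S| = d_1 := d_1(S)$. Since $d_1 = \dim \Lin_1(S) - \dim \Lin_0(S)$, there is a subset $S' \subseteq S$ with $|S'| = d_1$ and $\Lin_1(S') = \Lin_1(S)$; by Proposition~\ref{proposition:equal-linear-spans} this gives $\Lin_k(S) = \Lin_k(S')$ for all $k$ and $l(S) = l(S')$, so we may assume $|S| = d_1$. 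If $d_1 = 0$ then $\Lin_1(S) = \Lin_0(S)$ and the statement is degenerate (it follows at once from Lemma~\ref{lemma:full-stabilization}), so assume $d_1 \geq 1$ and set $m = 3d_1 \geq 3$.

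Next I would show that $S^{(m)} \subseteq \Lin_{m-1}(S)$. Fix an arbitrary $w \in S^{(m)}$; then $w$ involves $m$ letters from $S$. By Lemma~\ref{lemma:descendingly-flexible-bound}(5), these letters contain a class of pairwise swappable letters of size at least $\lfloor m/3 \rfloor + 1 = d_1 + 1$. Since $d_1 + 1 > d_1 = |S|$, Proposition~\ref{proposition:alternating-letters}(3) applies with $n = d_1 + 1$ and yields $w \in \Lin_{m-1}(S)$, as desired.

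Then I would assemble the conclusion. Descendingly flexible algebras are mixing by Proposition~\ref{proposition:descending-are-mixing}, so Corollary~\ref{corollary:new-elements} gives $\Lin_m(S) = \Lin_{m-1}(S) + \Lin(S^{(m)}) = \Lin_{m-1}(S)$, that is, $\Lin_{3d_1(S)}(S) = \Lin_{3d_1(S)-1}(S)$. By Lemma~\ref{lemma:full-stabilization} the chain then stabilizes, $\Lin_{\infty}(S) = \Lin_{3d_1(S)-1}(S)$, and Definition~\ref{definition:system-length} gives $l(S) \leq 3d_1(S) - 1$.

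There is essentially no hard step in this argument: all the combinatorial work is hidden inside Lemma~\ref{lemma:descendingly-flexible-bound}(5). The only points that need care are the bookkeeping ones --- checking that $m = 3d_1 \geq 3$ so that Lemma~\ref{lemma:descendingly-flexible-bound} is actually applicable, making sure the passage from $S$ to $S'$ preserves everything relevant (Proposition~\ref{proposition:equal-linear-spans}), and disposing of the trivial case $d_1 = 0$ separately.
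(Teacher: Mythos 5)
Your proposal is correct and follows essentially the same route as the paper: reduce to $|S| = d_1$ via Proposition~\ref{proposition:equal-linear-spans}, apply Lemma~\ref{lemma:descendingly-flexible-bound}(5) to get a class of at least $d_1+1$ pairwise swappable letters in any $w \in S^{(3d_1)}$, conclude $w \in \Lin_{3d_1-1}(S)$ by Proposition~\ref{proposition:alternating-letters}(3), and finish with Corollary~\ref{corollary:new-elements} and Lemma~\ref{lemma:full-stabilization}. Your explicit handling of the degenerate case $d_1 = 0$ and the check that $3d_1 \geq 3$ are minor bookkeeping additions the paper leaves implicit.
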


\begin{proof}
Similarly to the proof of Corollary~\ref{corollary:descendingly-alternative-bound}, we again assume that $|S| = d_1(S) = d_1$. Consider an arbitrary word $w \in S^{(3d_1)}$. By Lemma~\ref{lemma:descendingly-flexible-bound}(5), we have at least $d_1 + 1$ pairwise swappable letters in $(xy)z$, so, by Proposition~\ref{proposition:alternating-letters}(3), $w \in \Lin_{3d_1-1}(S)$. Then Corollary~\ref{corollary:new-elements} implies that $\Lin_{3d_1}(S) = \Lin_{3d_1-1}(S)$.
\end{proof}

However, we aim to obtain a logarithmic bound on the length of~$\A$. Our method uses the procedure of pulling the letters out which is described by the following proposition.

\begin{proposition} \label{proposition:pulling-out}
The division of letters into types from Lemma~\ref{lemma:descendingly-flexible-bound}(4) is preserved under pulling a letter out in the following sense. Here we follow the encoding from Lemma~\ref{lemma:descendingly-flexible-bound}(3).
    \begin{itemize}
        \item[{\rm(EOO)}] Consider the case with $(xy)z$. If $l(y) \geq 2$, then $y = y' y_1$, so we can pull $y_1$ out and obtain $(xy)z = (x(y' y_1))z \sim -(x(y' z))y_1$. If $l(z) \geq 2$, then $z = z' z_1$, so we can pull $z_1$ out and obtain $(xy)z = (xy)(z' z_1) \sim - z_1 (z' (xy))$. In both cases the words $x(y' z)$ and $z'(xy)$ have length $m-1$ and are of the type {\rm (OEE)}. Therefore, the division of their letters into types is described by Table~\ref{table:types}(b) with the roles of the subwords interchanged, and it can be obtained from Table~\ref{table:types}(a) applied to $(xy)z$.
        \item[{\rm(OEE)}] This case is completely similar to the previous one, but $x(y' z)$ and $z'(xy)$ are of the type {\rm (EOO)}, and the division of their letters into types is described by Table~\ref{table:types}(a).
        \item[{\rm(O11)}] We have $l(y) = l(z) = 1$, so we cannot pull out the letters from $y$ and $z$.
        \item[{\rm(OO)}] If $l(u) \geq 1$, then $u = u_1 u'$, so we can pull $u_1$ out and obtain $uv = (u_1 u') v \sim - (v u') u_1$. Similarly, if $l(v) \geq 1$, then $v = v' v_1$, so we can pull $v_1$ out and obtain $uv = u(v' v_1) \sim - v_1(v' u)$. The words $v u'$ and $v' u$ both have length $m-1$ and are of the type {\rm (OE)}. Hence the division of their letters into types is described by Table~\ref{table:types}(c), and it can be obtained from Table~\ref{table:types}(c) applied to $uv$.
        \item[{\rm(OE)}] Consider the case with $uv$. If $l(u) \geq 1$, then $u = u_1 u'$, and we again obtain $uv \sim - (v u') u_1$. Then the word $vu'$ has length $m-1$ and is of the type {\rm (OO)}, so the division of its letters into types is again described by Table~\ref{table:types}(c).
    \end{itemize}
\end{proposition}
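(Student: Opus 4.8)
The plan is a finite case analysis, one case for each of the five types (EOO), (OEE), (O11), (OO), (OE); the algebraic manipulations involved have already been carried out inside the proof of Lemma~\ref{lemma:descendingly-flexible-bound}(3)--(4), so the task is mainly to organize them. For each type I would: (i) fix a representative word $w$ written in the left/right-swapping notation, together with the assignment of its letters to the types I, II, III furnished by Lemma~\ref{lemma:descendingly-flexible-bound}(4) and Table~\ref{table:types}; (ii) apply the indicated equivalence pulling one outer letter out, obtaining $w\sim\pm(x(y'z))y_1$, $w\sim\pm z_1(z'(xy))$, $w\sim\pm(vu')u_1$, $w\sim\pm v_1(v'u)$, or one of their mirrors; (iii) identify the type of the resulting $(m-1)$-letter word $x(y'z)$, $z'(xy)$, $vu'$, or $v'u$; and (iv) check that the table of types of Lemma~\ref{lemma:descendingly-flexible-bound}(4) for the new word is obtained from the one for $w$ by the asserted permutation of the subword roles.

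Step (ii) needs no new computation: for (EOO) and (OEE) the pull-out is a short composition of the elementary equivalences $(ab)c\sim-(cb)a$ and $a(bc)\sim-c(ba)$ of Proposition~\ref{proposition:descendingly-flexible}, Eqs.~\eqref{equation:general-equivalence-1}--\eqref{equation:general-equivalence-2}, together with compatibility of $\sim$ with one-sided multiplication, Proposition~\ref{proposition:equivalence:properties}(3); these are literally moves such as $(x((y_2y')y_1))z\sim\mp(x((y_2y')z))y_1$ already displayed in the proof of Lemma~\ref{lemma:descendingly-flexible-bound}(3). For (OO) and (OE) the same two equivalences are applied to the two-subword form $w\sim\pm uv$, and in (O11) both short subwords have length $1$, so there is nothing to pull out. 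Step (iii) is a parity count: deleting one outer letter flips the parity of the length of the subword it came from and of the total length, so a length-pattern $(\mathrm{E},\mathrm{O},\mathrm{O})$ turns into $(\mathrm{E},\mathrm{E},\mathrm{O})$, which — read against the canonical orientation of the right-associated form $z'(y'x')$, where the inner-right, middle and outer-left subwords carry the $x$-, $y$- and $z$-roles — is type (OEE); the same bookkeeping yields the transitions (EOO)$\leftrightarrow$(OEE) and (OO)$\leftrightarrow$(OE), and shows that (O11) is fixed.

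Step (iv) is the only point needing genuine care, and it is where I expect the main obstacle to lie. A chain of elementary equivalences changes neither the multiset of letters of a word nor the relation $\leftrightarrow$, which is moreover symmetric and transitive by Proposition~\ref{proposition:alternating-letters}(1); hence the partition of the letters into swappability classes is the same for $w$ and for the pulled-out word, and the pulled-out letter lies in the class dictated by transitivity. What changes is only which subword plays which of the $x$-, $y$-, $z$- (or $u$-, $v$-) roles, so when one re-reads Table~\ref{table:types} for the new type the labels I/II/III get permuted. I would verify, case by case, that this permutation is exactly the stated interchange of roles: for instance in (EOO)$\to$(OEE) the new $x$-role is the old $z$, the new $z$-role is the old $x$, and the new $y$-role is the old $y$ with one outer letter removed, so that Tables~\ref{table:types}(a) and~(b) describe the same three classes with the labels I and III interchanged and II unchanged; in (OEE)$\to$(EOO) the inverse interchange occurs; and in (OO)$\to$(OE) and (OE)$\to$(OO) Table~\ref{table:types}(c) is reproduced with $u$ and $v$ swapped. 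So the hard part is not the equivalences but this matching of class labels through the role permutation, together with keeping the short or degenerate subwords consistent throughout (where a set of outer letters is empty and a table column reads ``--'').
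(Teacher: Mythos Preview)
Your proposal is correct and follows essentially the same approach as the paper: the paper's proof is the single sentence ``Immediately follows from Lemma~\ref{lemma:descendingly-flexible-bound}(3--4),'' and your plan is precisely a spelled-out version of that, drawing on the equivalences already displayed in the proof of Lemma~\ref{lemma:descendingly-flexible-bound}(3), the type tables from part~(4), and the invariance of $\leftrightarrow$ under elementary equivalences from Proposition~\ref{proposition:alternating-letters}(1). Your identification of the role-permutation bookkeeping as the only nontrivial verification is accurate, and the paper evidently considers it absorbed into the statement of the proposition itself together with Lemma~\ref{lemma:descendingly-flexible-bound}(4).
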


\begin{proof}
Immediately follows from Lemma~\ref{lemma:descendingly-flexible-bound}(3--4).
\end{proof}

\begin{corollary} \label{corollary:pulling-out}
The division of letters into types from Lemma~\ref{lemma:descendingly-flexible-bound}(4) is preserved under pulling the letters out in the sence of Proposition~\ref{proposition:pulling-out} which is performed an arbitrary number of times.
\end{corollary}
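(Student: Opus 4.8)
The plan is a straightforward induction on the number $N$ of successive pull-outs that have been performed. The base case $N = 0$ is nothing but the content of Lemma~\ref{lemma:descendingly-flexible-bound}(4): the word $w$ itself, written in its canonical form $(xy)z$ or $z'(y'x')$ (or $uv$, $u'v'$ in the last two cases), has its letters divided into types I, II, III (or I, II), each of which is contained in a single class of p.s.l. So there is nothing to do in the base case.

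For the inductive step I would keep track of the expression as it evolves: after $N$ pull-outs it has the form $X_1 \dots X_r \, w_N$, where each $X_i \in \{ L_{s_i}, R_{s_i} \}$ is one of the previously pulled-out letters and $w_N$ is the current ``core'' word. By the inductive hypothesis $w_N$ is still in one of the canonical forms (EOO), (OEE), (O11), (OO), (OE) of Lemma~\ref{lemma:descendingly-flexible-bound}(3), and its letters together with all the $s_i$ are divided into (at most three) types according to Table~\ref{table:types}, each type contained in one class of p.s.l. In the (O11) case no further pull-out is possible, so assume we are in one of the other four. Performing the $(N+1)$-th pull-out touches only $w_N$: by Proposition~\ref{proposition:pulling-out} it replaces $w_N$ by $X_{r+1} w_{N+1}$, where $X_{r+1}$ is the newly pulled-out letter and $w_{N+1}$ is a strictly shorter word whose type is again one of the canonical ones — the proposition shows (EOO) and (OEE) exchange, (OO) and (OE) exchange — and, crucially, the division of the letters of $w_{N+1}$ and of $X_{r+1}$ into types is exactly the one induced from $w_N$. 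Since the types of $X_1, \dots, X_r$ are untouched, the whole expression $X_1 \dots X_{r+1}\, w_{N+1}$ again carries a consistent division into at most three types, each contained in a single class of p.s.l. This closes the induction and proves the corollary.

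The only step needing care — and the ``main obstacle'', such as it is — is to verify that Proposition~\ref{proposition:pulling-out} is genuinely applicable at \emph{every} stage, i.e.\ that the core word never leaves the finite list of canonical types, so that no new configuration ever appears. But this is precisely what the proposition guarantees: a single pull-out sends (EOO) $\to$ (OEE), (OEE) $\to$ (EOO), (OO) $\to$ (OE), (OE) $\to$ (OO), and leaves (O11) with no available move. Hence the induction never escapes the cases already analysed, and the corollary is obtained simply by iterating Proposition~\ref{proposition:pulling-out}.
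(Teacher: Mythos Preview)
Your proposal is correct and follows exactly the same approach as the paper: induction on the number of iterations of Proposition~\ref{proposition:pulling-out}, using the fact that (EOO) and (OEE), as well as (OO) and (OE), are transformed into each other under a single pull-out. The paper's own proof is the one-line version of your argument.
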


\begin{proof}
Follows from Proposition~\ref{proposition:pulling-out} by induction on the number of iterations, since the cases (EOO) and (OEE), as well as (OO) and (OE), are transformed into each other under this procedure.
\end{proof}

\begin{remark} \label{remark:pulling-out}
In Proposition~\ref{proposition:pulling-out} it is possible to pull out not only one of the letters $y_1$, $z_1$, $u_1$ or $v_1$ (we will temporarily denote it by $s_1$, and the subword containing it by $s$). Actually, we can pull out an arbitrary letter~$t$ which belongs to the same type as $s_1$ in Table~\ref{table:types} (we will denote this type by IV). Indeed, $t$ and $s_1$ are swappable, so we can exchange $t$ with $s_1$ as in the proofs of Proposition~\ref{proposition:alternating-letters}(1) and Lemmas~\ref{lemma:descendingly-alternative-classes} and~\ref{lemma:descendingly-alternative-length}, so that $t$ becomes the most outer letter of $s$, and then pull $t$ out.

We can proceed to pull out the letters of type~IV until there is only one letter of this type is left, and it is the inner letter of some subword distinct from $s$. By Lemma~\ref{lemma:descendingly-flexible-bound}(4), type~IV contains $l(s)$ letters, so we can pull out at most $l(s) - 1$ of them.
\end{remark}

\begin{notation}
We denote the operation of pulling out several letters from $s$ as follows:
\begin{itemize}
    \item $\bm{E}s$ if their number is even;
    \item $\bm{O}s$ if their number is odd;
    \item $\bm{A}s$ if their number is arbitrary, i.e., either even or odd.    
\end{itemize}
\end{notation}

\begin{example} \label{example:pulling-out}
Consider $(xy)z \in S^n$ of the type {\rm (EOO)} or {\rm (OEE)} from Lemma~\ref{lemma:descendingly-flexible-bound}(3). If we pull out an even number of letters from $y$ ($\bm{E}y$), then the new word is again of the form $(xy)z$ (we denote this by $(xy)z \mapsto (xy)z$). If we pull out an odd number of letters from $z$ after that ($\bm{O}z$), then the word transforms into $z(xy)$ (i.e., $(xy)z \mapsto z(xy)$). If, finally, we pull out an arbitrary number of letters from $x$ ($\bm{A}x$), then for $\bm{E}x$ we have $z(xy) \mapsto z(xy)$, and for $\bm{O}x$ we have $z(xy) \mapsto (zx)y$.
\end{example}

\subsection{Bounds on the lengths}

We first prove an analogue of Lemma~\ref{lemma:descendingly-alternative-length}.

\begin{lemma} \label{lemma:descendingly-flexible-length}
Let $S \subseteq \A$, and $(xy)z \in S^n \setminus \Lin_{n-1}(S)$ is of the type {\rm (EOO)} or {\rm (OEE)} from Lemma~\ref{lemma:descendingly-flexible-bound}(3). Consider four sets of subwords obtained from $(xy)z$ by pulling its letters out:
\begin{enumerate}[{\rm (A)}]
    \item $\bm{E}y$ ($(xy)z \mapsto (xy)z$), $\bm{O}z$ ($(xy)z \mapsto z(xy)$), $\bm{A}x$;
    \item $\bm{E}z$ ($(xy)z \mapsto (xy)z$), $\bm{O}y$ ($(xy)z \mapsto x(yz)$), $\bm{A}x$;
    \item $\bm{O}z$ ($(xy)z \mapsto z(xy)$), $\bm{O}x$ ($z(xy) \mapsto (zx)y$), $\bm{A}y$;
    \item $\bm{O}y$ ($(xy)z \mapsto x(yz)$), $\bm{O}x$ ($x(yz) \mapsto (yz)x$), $\bm{A}z$.
\end{enumerate}
We denote by $W_m^X$ the subset of $X \in \{ A, B, C, D \}$ which consists of all words with a given length~$m$, $3 \leq m \leq n-1$. Then $W_m^X$ is linearly independent modulo $\Lin_{m-1}(S)$.
\end{lemma}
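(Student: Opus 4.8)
The plan is to mimic the proof of Lemma~\ref{lemma:descendingly-alternative-length}, but now tracking the three-type decomposition from Lemma~\ref{lemma:descendingly-flexible-bound}(4) rather than a two-class one, and using Corollary~\ref{corollary:pulling-out} to guarantee that the type structure is preserved throughout the pulling-out procedure. First I would fix the word $(xy)z\in S^n\setminus\Lin_{n-1}(S)$ and recall from Lemma~\ref{lemma:descendingly-flexible-bound}(4) that its letters split into three types I, II, III, each type completely contained in a single class of pairwise swappable letters; Example~\ref{example:pulling-out} already shows that each of the four pulling schemes (A)--(D) is a legitimate sequence of elementary equivalences taking $(xy)z$ to a word of the form $\pm P s_{i_1}\cdots s_{i_r}$, where $P$ is the remaining inner word of length $m$ and the $s_{i_j}$ are the pulled-out letters (via Remark~\ref{remark:pulling-out}, these may be chosen to be any letters of the appropriate type). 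For each $X\in\{A,B,C,D\}$ and each length $m$, an element of $W_m^X$ is determined by a choice of which letters of the ``free'' type are kept inside versus pulled out; the key bookkeeping step is to make this parametrization precise and to note that distinct elements of $W_m^X$ correspond to distinct such subsets.

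The core of the argument is the linear-independence claim, which I would prove exactly as in Lemma~\ref{lemma:descendingly-alternative-length}: suppose $W_m^X$ were linearly dependent modulo $\Lin_{m-1}(S)$, so that one word $w_0\in W_m^X$ is a combination of the others plus something in $\Lin_{m-1}(S)$. Apply the inverse pulling-out transformation $T$ that reconstructs $(xy)z$ from $w_0$ (this is the composition of the elementary equivalences of Example~\ref{example:pulling-out} read backwards, i.e.\ pushing the letters $s_{i_1},\dots,s_{i_r}$ back in). By Corollary~\ref{corollary:pulling-out} this transformation respects the type decomposition. When $T$ is applied to any \emph{other} word $w'\in W_m^X$, at least one letter being pushed back in already occurs inside $w'$ and lies in the same type — hence in the same class of pairwise swappable letters — as its counterpart inside $w'$; so $T(w')$ contains two equal swappable letters and therefore $T(w')\in\Lin_{m-1}(S)$ by Proposition~\ref{proposition:alternating-letters}(2). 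Likewise $T$ applied to the $\Lin_{m-1}(S)$-part stays in $\Lin_{m-1}(S)$ by Proposition~\ref{proposition:equivalence:properties}(3). Summing, we get $(xy)z=\pm T(w_0)\in\Lin_{n-1}(S)$, contradicting the hypothesis. Hence $W_m^X$ is linearly independent modulo $\Lin_{m-1}(S)$.

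The part I expect to be the genuine obstacle is verifying, for each of the four schemes (A)--(D) separately, that the three-step pulling sequence $\bm{E}/\bm{O}/\bm{A}$ indeed realizes every subset of the designated free type as a distinct word of $W_m^X$, and that in scheme $X$ the two letters that get matched up when $T$ is applied to a wrong word $w'$ really do belong to the \emph{same} type in Table~\ref{table:types}. This requires carefully following the role-swapping of the subwords $x,y,z$ as letters are extracted (the remark after Lemma~\ref{lemma:descendingly-flexible-bound}(1) about which subwords change roles, together with the parity bookkeeping in Example~\ref{example:pulling-out}), and checking the (EOO) versus (OEE) cases and Tables~\ref{table:types}(a)--(b) against each other. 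Once the type-preservation is nailed down, the independence argument is formally identical to the descendingly alternative case; so I would organize the write-up as: (i) set up the parametrization of $W_m^X$ by subsets of the free type; (ii) record that $T$ preserves types (Corollary~\ref{corollary:pulling-out}); (iii) run the contradiction argument verbatim as above; (iv) dispatch the four schemes by symmetry, spelling out only scheme (A) in detail and indicating the minor changes for (B), (C), (D).
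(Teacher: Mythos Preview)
Your approach is correct and essentially identical to the paper's: parametrize $W_m^X$ explicitly, then run the push-back contradiction argument of Lemma~\ref{lemma:descendingly-alternative-length}, invoking Corollary~\ref{corollary:pulling-out} for type-preservation and Proposition~\ref{proposition:alternating-letters}(2) to kill the wrong terms. One point to fix in your write-up: the elements of $W_m^X$ are parametrized not by subsets of a single ``free'' type but by \emph{triples} $(J,K,L)$ of subsets of all three types, with parity constraints on two of them (for scheme~A: $k-|K|$ even, $l-|L|$ odd, $j-|J|$ arbitrary); the paper writes these out as explicit words $w^A_{J,K,L}$. This does not break your independence argument---pushing the complementary letters $\overline{j}\setminus J$, $\overline{k}\setminus K$, $\overline{l}\setminus L$ into a different $w^A_{J',K',L'}$ still forces a repeated letter in some type, exactly as you say---but getting the parametrization right is what yields the cardinalities $|W_m^X|$ used later in Lemma~\ref{lemma:descendingly-flexible-high-length}. (Minor slip: your $T(w')\in\Lin_{m-1}(S)$ should read $\Lin_{n-1}(S)$, since $T$ lands in length-$n$ words.)
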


\begin{remark}
It is allowed to pull out not only the outer letters of $x$, $y$ and $z$ but all letters which belong to the same type (I, II or III). However, there should remain at least one letter of each type, see Remark~\ref{remark:pulling-out}.

By Corollary~\ref{corollary:pulling-out}, the subwords of length $m$ obtained by pulling out the same sets of letters have the same structure and the same division of letters into types. Thus they are equivalent up to multiplication up to $\pm 1$, so we will not distinguish them.
\end{remark}

\begin{proof}[Proof of Lemma~\ref{lemma:descendingly-flexible-length}]
We will prove this statement for the set A only, since other cases are completely similar. For simplicity of notation, we will also assume that the letters of $(xy)z$ are divided into types according to Table~\ref{table:types}(b). We denote the types containing the outer letters of $x$, $y$ and $z$ by $X = \{ x_1, \dots, x_j \}$, $Y = \{ y_1, \dots, y_k \}$ and $Z = \{ z_1, \dots, z_l \}$, respectively. Here $j = l(x)$, $k = l(y)$ and $l = l(z)$, so $j + k + l = n$. Note that the inner letter of $x$ belongs to $Y$, the inner letter of $y$ belongs to $Z$, and the inner letter of $z$ belongs to $X$. The elements of each type are pairwise swappable.

Let $m \in \mathbb{N}$. We consider three arbitrary sequences of indices $1 \leq j_1 < \dots < j_r \leq j$, $1 \leq k_1 < \dots < k_s \leq k$ and $1 \leq l_1 < \dots < l_t \leq l$, and denote them by $J$, $K$ and $L$, respectively. We also say that $k-s$ must be even, $l-t$ must be odd, and $r + s + t = m$. Then we define the word $w^A_{J,K,L} \in S^m$ as
$$
w^A_{J,K,L} = 
\begin{cases}
(L_{z_{l_1}} R_{z_{l_2}} \dots x_{j_r}) \cdot ((L_{x_{j_1}} R_{x_{j_2}} \dots y_{k_s}) \cdot (R_{y_{k_1}} L_{y_{k_2}} \dots z_{l_t})), & j-r \text{ is even},\\
((L_{z_{l_1}} R_{z_{l_2}} \dots x_{j_r}) \cdot (R_{x_{j_1}} L_{x_{j_2}} \dots y_{k_s})) \cdot (R_{y_{k_1}} L_{y_{k_2}} \dots z_{l_t}), & j-r \text{ is odd}.
\end{cases}
$$
This corresponds to Example~\ref{example:pulling-out} which describes the form of the word obtained from $(xy)z$ by pulling $\bm{E}y, \bm{O}z, \bm{A}x$ out. Clearly, $j-r$ is even if and only if $n-m$ is odd. We will show that for any $3 \leq m \leq n$ the set
$$
W_m^A = \{ w^A_{J,K,L} \; | \; \text{all } J, K, L \text{ such that } |J| + |K| + |L| = m \}
$$
is linearly independent modulo $\Lin_{m-1}(S)$. It will follow that $d_m(S) = \dim \Lin_m(S) - \dim \Lin_{m-1}(S) \geq |W_m^A|$.

Let us denote $\overline{q} = \{ 1, \dots, q \}$ for any $q \in \mathbb{N}$, and also $\{ j_{r+1}, \dots, j_j \} = \overline{j} \setminus J$, $\{ k_{s+1}, \dots, k_k \} = \overline{k} \setminus K$, $\{ l_{t+1}, \dots, l_l \} = \overline{l} \setminus L$. Then, by Corollary~\ref{corollary:pulling-out} and Remark~\ref{remark:pulling-out}, if we pull out $y_{k_{s+1}}, \dots, y_{k_k}$, then $z_{l_{t+1}}, \dots, z_{l_l}$, and finally $x_{j_{r+1}}, \dots, x_{j_j}$ from $(xy)z$, the resulting word is equivalent to $w^A_{J,K,L}$ up to multiplication by $\pm 1$.

Assume now that $J', K', L'$ are distinct from $J, K, L$ and $|J'| + |K'| + |L'| = |J| + |K| + |L|$. Then we can push the letters $x_{j_{r+1}}, \dots, x_{j_j}$, $z_{l_{t+1}}, \dots, z_{l_l}$, and $y_{k_{s+1}}, \dots, y_{k_k}$ into $w^A_{J',K',L'}$ to obtain a new word $u_{J',K',L'}$. It follows from Lemma~\ref{lemma:descendingly-flexible-bound}(4) and Corollary~\ref{corollary:pulling-out} that the letters of $u_{J',K',L'}$ are divided into types $J' \cup (\overline{j} \setminus J)$, $K' \cup (\overline{k} \setminus K)$ and $L' \cup (\overline{l} \setminus L)$. We now show that at least one of these types contains two equal elements. Indeed, if $|J'| \geq |J|$ and $J' \neq J$, then there are two equal elements in $J' \cup (\overline{j} \setminus J)$. The similar statement holds for $K$ and $L$, and at least one of these conditions must be satisfied, since otherwise we have $(J, K, L) = (J', K', L')$. Hence $u_{J',K',L'}$ has at least two equal letters which belong to the same class of p.s.l. and, by Proposition~\ref{proposition:alternating-letters}(2), we have $u_{J',K',L'} \in \Lin_{m-1}(S)$.

Finally, assume that $W_m^A$ is linearly dependent modulo $\Lin_{m-1}(S)$. Then there exist $J, K, L$ such that $|J| + |K| + |L| = m$ and
$$
w^A_{J,K,L} = \sum_{\substack{|J'| + |K'| + |L'| = m\\ (J',K',L') \neq (J,K,L)}} \alpha_{J',K',L'} w^A_{J',K',L'} + v,
$$
where $\alpha_{J',K',L'} \in \F$ and $v \in \Lin_{m-1}(S)$. Therefore, we have
$$
(xy)z \sim \pm \sum_{\substack{|J'| + |K'| + |L'| = m\\ (J',K',L') \neq (J,K,L)}} \alpha_{J',K',L'} u_{J',K',L'} \pm v',
$$
where $v'$ is obtained from $v$ by pushing the letters $x_{j_{r+1}}, \dots, x_{j_j}$, $z_{l_{t+1}}, \dots, z_{l_l}$, and $y_{k_{s+1}}, \dots, y_{k_k}$ in, so $v' \in \Lin_{n-1}(S)$. Hence $(xy)z \in \Lin_{n-1}(S)$, a contradiction.
\end{proof}

\begin{remark} \label{remark:descendingly-flexible-length}
Note that $W^A_{n-1}$ can only be obtained by pulling out one letter from $z$ ($\bm{1}z$), so $|W^A_{n-1}| = l$, $W^B_{n-1}$ can only be obtained by pulling out one letter from $y$ ($\bm{1}y$), so $|W^B_{n-1}| = k$, and $W^C_{n-1} = W^D_{n-1} = \varnothing$, since we have to pull out at least two letters from $(xy)z$. Besides, $W^A_{n-2} = W^C_{n-2}$ ($\bm{1}z, \bm{1}x$) and $W^B_{n-2} = W^D_{n-2}$ ($\bm{1}y, \bm{1}x$).
\end{remark}

\begin{lemma} \label{lemma:pulling-one-letter-subwords}
Assume that, under the conditions of Lemma~\ref{lemma:descendingly-flexible-length}, the set $W^A_{n-1} \cup W^B_{n-1}$ is linearly dependent modulo $\Lin_{n-2}(S)$. Then there exist two words $(\tilde{x}\tilde{y})\tilde{z}, (\hat{x}\hat{y})\hat{z} \in S^n \setminus \Lin_{n-1}(S)$ such that 
\begin{align*}
&\begin{cases}
l(\tilde{x}) = l(y) - 1,\\
l(\tilde{y}) = l(z),\\
l(\tilde{z}) = l(x) + 1.
\end{cases}
&
&\begin{cases}
l(\hat{x}) = l(z) - 1,\\
l(\hat{y}) = l(x) + 1,\\
l(\hat{z}) = l(y).
\end{cases}
\end{align*}
\end{lemma}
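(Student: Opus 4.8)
The plan is to convert a nontrivial dependence of $W^A_{n-1}\cup W^B_{n-1}$ modulo $\Lin_{n-2}(S)$ into the existence of two new length-$n$ words, by reinserting a single letter into the length-$(n-1)$ words and discarding the contributions that acquire a repeated swappable letter. Following the proof of Lemma~\ref{lemma:descendingly-flexible-length}, write $Y$ and $Z$ for the classes of pairwise swappable letters of $(xy)z$ containing the outer letters of $y$ and of $z$, respectively; by Lemma~\ref{lemma:descendingly-flexible-bound}(4) one has $|Y|=l(y)$ and $|Z|=l(z)$, and all letters of each of $Y$ and $Z$ are pairwise distinct, since a repetition would force $(xy)z\in\Lin_{n-1}(S)$ by Proposition~\ref{proposition:alternating-letters}(2). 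By Remark~\ref{remark:descendingly-flexible-length}, $W^A_{n-1}$ consists of the $l(z)$ words $w^A_t$ obtained from $(xy)z$ by pulling out a letter $t\in Z$ (each of the form $z(xy)$), and $W^B_{n-1}$ of the $l(y)$ words $w^B_s$ obtained by pulling out a letter $s\in Y$ (each of the form $x(yz)$); moreover the proof of Lemma~\ref{lemma:descendingly-flexible-length} attaches to each such $t$ (resp.\ $s$) a single-letter operator $T_t\in\{L_t,R_t\}$ (resp.\ $T_s\in\{L_s,R_s\}$) with $T_t(w^A_t)\sim\pm(xy)z$ (resp.\ $T_s(w^B_s)\sim\pm(xy)z$).

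Since $W^A_{n-1}$ and $W^B_{n-1}$ are each linearly independent modulo $\Lin_{n-2}(S)$ by Lemma~\ref{lemma:descendingly-flexible-length}, any nontrivial dependence of their union genuinely mixes the two families, i.e.\ it can be written as $\sum_t\alpha_t w^A_t+\sum_s\beta_s w^B_s=v\in\Lin_{n-2}(S)$ with $\alpha_{t_0}\neq 0$ for some $t_0\in Z$ and $\beta_{s_0}\neq 0$ for some $s_0\in Y$. Applying $T_{t_0}$ to this relation, one has $T_{t_0}(v)\in S\cdot\Lin_{n-2}(S)\subseteq\Lin_{n-1}(S)$ and $T_{t_0}(w^A_{t_0})\sim\pm(xy)z$, while for $t\neq t_0$ the letter $t_0$ already occurs in $w^A_t$ and, being reinserted, rejoins the class $Z$ (this is Proposition~\ref{proposition:pulling-out} and Corollary~\ref{corollary:pulling-out} read in the reverse direction), so $T_{t_0}(w^A_t)$ has two equal letters inside one class of pairwise swappable letters and hence lies in $\Lin_{n-1}(S)$ by Proposition~\ref{proposition:alternating-letters}(2). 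Therefore
$$
\alpha_{t_0}\bigl(\pm(xy)z\bigr)\equiv-\sum_s\beta_s\,T_{t_0}(w^B_s)\pmod{\Lin_{n-1}(S)},
$$
and as $(xy)z\notin\Lin_{n-1}(S)$ and $\alpha_{t_0}\neq 0$, at least one word $T_{t_0}(w^B_{s'})$ is not in $\Lin_{n-1}(S)$. Symmetrically, applying $T_{s_0}$ to the same relation sends every $T_{s_0}(w^B_s)$ with $s\neq s_0$ into $\Lin_{n-1}(S)$, so $T_{s_0}(w^A_{t'})\notin\Lin_{n-1}(S)$ for some $t'\in Z$.

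It remains to identify these two surviving words of length $n$. Each is obtained from $(xy)z$ by one pull-out followed by one push-in, so rewriting it into a canonical form of Lemma~\ref{lemma:descendingly-flexible-bound}(1) using only the equivalences~\eqref{equation:general-equivalence-1}--\eqref{equation:general-equivalence-2}, and tracking the parity type and the partition into classes via Lemma~\ref{lemma:descendingly-flexible-bound}(3--4), should yield that $T_{t_0}(w^B_{s'})$ is equivalent, up to sign, to a word $(\tilde x\tilde y)\tilde z$ with $l(\tilde x)=l(y)-1$, $l(\tilde y)=l(z)$, $l(\tilde z)=l(x)+1$, and that $T_{s_0}(w^A_{t'})$ is equivalent to a word $(\hat x\hat y)\hat z$ with $l(\hat x)=l(z)-1$, $l(\hat y)=l(x)+1$, $l(\hat z)=l(y)$. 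Since equivalence does not change membership in $\Lin_{n-1}(S)$ by Proposition~\ref{proposition:equivalence:properties}(2), these are the required words.

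The main obstacle is exactly this last identification. The operator $T_{t_0}$ is tailored to the words of $W^A_{n-1}$, which have the form $z(xy)$, and applying it to a word of $W^B_{n-1}$, which has the form $x(yz)$ (and vice versa for $T_{s_0}$), alters the bracketing and flips the parity type, so one must follow the elementary equivalences carefully enough to see precisely how the reinserted letter redistributes among the three classes, and thereby read off the exact block lengths claimed. This is the technical heart of the lemma, and it is what makes the rigid bookkeeping of Lemma~\ref{lemma:descendingly-flexible-bound} and Proposition~\ref{proposition:pulling-out} necessary; a secondary point to verify carefully is that, for $t\neq t_0$, the reinserted copy of $t_0$ really does land in the same class as the copy already present in $w^A_t$, which is where Corollary~\ref{corollary:pulling-out} is used.
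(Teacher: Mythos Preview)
Your strategy is exactly the paper's: isolate one $w^A$ (respectively one $w^B$) on one side of the dependence, push the missing letter back in, observe that all same-family terms collapse into $\Lin_{n-1}(S)$ by a repeated swappable letter, and conclude that some cross-family term survives outside $\Lin_{n-1}(S)$. That part is fine.

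The genuine gap is that you stop short of the identification and explicitly label it ``the main obstacle'' and ``the technical heart''. In fact this step is a one-line computation, not the elaborate bookkeeping you anticipate. Concretely: a word of $W^B_{n-1}$ has the form $x(y'z)$ with $l(y')=l(y)-1$, and the push-back operator for $W^A_{n-1}$ is left multiplication by the removed $z$-letter $t_0$ (since $(xy)(z't_0)\sim -t_0(z'(xy))$, cf.\ Proposition~\ref{proposition:pulling-out}). Hence
\[
T_{t_0}(w^B_{s'}) \;=\; t_0\bigl(x(y'z)\bigr)\;\sim\;-(y'z)(x\,t_0)
\]
by a single application of~\eqref{equation:general-equivalence-2}, and one reads off $\tilde x=y'$, $\tilde y=z$, $\tilde z=x\,t_0$ with the stated lengths. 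The symmetric case is equally short: the push-back operator for $W^B_{n-1}$ is right multiplication by the removed $y$-letter $s_0$, a word of $W^A_{n-1}$ has the form $z'(xy)$ with $l(z')=l(z)-1$, and
\[
(z'(xy))\,s_0 \;\sim\; -\,(s_0(xy))\,z' \;\sim\; -\,(z'(x\,s_0))\,y
\]
by~\eqref{equation:general-equivalence-1}, then~\eqref{equation:general-equivalence-2} inside the first factor, then~\eqref{equation:general-equivalence-1} again; this gives $\hat x=z'$, $\hat y=x\,s_0$, $\hat z=y$. So what you flagged as the hard part is in fact the easy part; your proof is correct in outline but incomplete as written, and once you insert these two short computations it matches the paper's argument essentially verbatim.
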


\begin{proof}
Assume that the set $W^A_{n-1} \cup W^B_{n-1}$ is linearly dependent. Since $W^A_{n-1}$ and $W^B_{n-1}$ are both linearly independent, it follows that there exists some $w^A_{\overline{j},\overline{k},L}$ with $|L| = l - 1$ which can be expressed linearly through 
$$
\left\{ w^A_{\overline{j},\overline{k},L'} \; \Big| \; |L'| = l - 1, \: L' \neq L \right\} \cup \left\{ w^B_{\overline{j},K',\overline{l}} \; \Big| \; |K'| = k - 1 \right\} \cup \Lin_{n-2}(S).
$$
Then
$$
(xy)z \sim \pm z_{l_l} w^A_{\overline{j},\overline{k},L} \sim \sum_{\substack{|L'| = l-1\\ L' \neq L}} \alpha_{L'} \left( z_{l_l} w^A_{\overline{j},\overline{k},L'} \right) + \sum_{|K'| = k-1} \beta_{K'} \left( z_{l_l} w^B_{\overline{j},K',\overline{l}} \right).
$$

We have shown in the proof of Lemma~\ref{lemma:descendingly-flexible-length} that $z_{l_l} w^A_{\overline{j},\overline{k},L'} \in \Lin_{n-1}(S)$. But $(xy)z \notin \Lin_{n-1}(S)$, so there exists $K'$ such that $z_{l_l} w^B_{\overline{j},K',\overline{l}} \notin \Lin_{n-1}(S)$. It can be easily seen that $w^B_{\overline{j},K',\overline{l}} \notin \Lin_{n-1}(S)$ has the form $x(y'z)$ with $l(y') = l(y) - 1$, so $z_{l_l} w^B_{\overline{j},K',\overline{l}}$ has the form $z_{l_l}(x(y'z)) \sim - (y'z)(x z_{l_l})$. It remains to denote $\tilde{x} = y'$, $\tilde{y} = z$ and $\tilde{z} = x z_{l_l}$.

Similarly, some $w^B_{\overline{j},K,\overline{l}}$ can be expressed linearly through all other elements of $W^A_{n-1} \cup W^B_{n-1}$ modulo $\Lin_{n-2}(S)$. By repeating this argument, we obtain the desired word $(\hat{x}\hat{y})\hat{z} \in S^n \setminus \Lin_{n-1}(S)$.
\end{proof}

Lemma~\ref{lemma:descendingly-flexible-length} does not allow to obtain subwords of length $m \leq 2$, since at least one letter of each of the three types must be left. Hence it provides lower bounds on $d_m(S)$ for $3 \leq m \leq n - 1$ only. In the following lemma we also obtain lower bounds on $d_1(S)$ and $d_2(S)$.

\begin{lemma} \label{lemma:descendingly-flexible-short-length}
Let $S \subseteq \A$, and $(xy)z \in S^n \setminus \Lin_{n-1}(S)$ is of the type {\rm (EOO)}, {\rm (OEE)} or {\rm (O11)} from Lemma~\ref{lemma:descendingly-flexible-bound}(3). We denote $j = l(x)$, $k = l(y)$ and $l = l(z)$. Then $d_1(S) \geq \max \{ j, k, l \}$ and $d_2(S) \geq \max \{ jk, kl, jl \}$.
\end{lemma}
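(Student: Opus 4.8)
The plan is to exploit the partition of the letters of $(xy)z$ into three classes of pairwise swappable letters (p.s.l.) given by Lemma~\ref{lemma:descendingly-flexible-bound}(4): in each of the types (EOO), (OEE), (O11) these classes have sizes $j$, $k$, $l$. The tool I would use repeatedly is that the elementary equivalences of Definition~\ref{definition:formal-equivalence} are instances of identities valid for \emph{all} arguments from $S^\infty$, so a chain of such equivalences remains valid after one consistently replaces a single letter by another fixed letter everywhere in the chain. Consequently, if $t\leftrightarrow t'$ inside a word $u\in S^m$, then replacing the distinguished occurrence of $t$ by $t'$ produces a word with two equal swappable letters, which lies in $\Lin_{m-1}(S)$ by Proposition~\ref{proposition:alternating-letters}(2); likewise, replacing a letter of an $m$-letter word by the unit $e$ (or by $0$ when $\A$ is non-unital) gives an element of $\Lin_{m-1}(S)$, since a factor equal to $e$ may be deleted.

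For the bound on $d_1(S)$, I would take the largest of the three classes, say with letters $t_1,\dots,t_p$, $p=\max\{j,k,l\}$, and show that they are linearly independent modulo $\Lin_0(S)$; since $d_1(S)=\dim\Lin_1(S)-\dim\Lin_0(S)$, this yields $d_1(S)\ge p$. If there were a dependence, then after rescaling we could write $t_1=\sum_{i\ge 2}\gamma_i t_i+\gamma e$ with $\gamma\in\F$, $\gamma=0$ in the non-unital case. Substituting this into the distinguished occurrence of $t_1$ in $(xy)z$ and expanding bilinearly gives $(xy)z=\sum_{i\ge 2}\gamma_i w_i+\gamma w_e$, where $w_i$ is $(xy)z$ with that occurrence replaced by $t_i$ and $w_e$ with it replaced by $e$. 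By the observation above $w_e\in\Lin_{n-1}(S)$, and each $w_i\in\Lin_{n-1}(S)$ because the two occurrences of $t_i$ in $w_i$ are swappable (they come, via the substitution principle, from the swappable pair $t_1\leftrightarrow t_i$, which lies in one class). Hence $(xy)z\in\Lin_{n-1}(S)$, a contradiction.

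For the bound on $d_2(S)$, let $T_1$, $T_2$ be the two largest classes, $|T_1|=p$, $|T_2|=q$, $pq=\max\{jk,kl,jl\}$, with letters $a_1,\dots,a_p$ and $b_1,\dots,b_q$. First I would apply the pulling-out procedure (Proposition~\ref{proposition:pulling-out}, Corollary~\ref{corollary:pulling-out}, Remark~\ref{remark:pulling-out}) to reduce $(xy)z$ to a word $w'\sim\pm(xy)z$ of the same length $n$ in which one of the subwords $x$, $y$, $z$ survives at length $2$ and contributes exactly one letter of $T_1$ and one of $T_2$, say $a_1$ and $b_1$, while every other letter — in particular $a_2,\dots,a_p,b_2,\dots,b_q$ — has been moved elsewhere in $w'$. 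Writing $D[-]$ for $w'$ with this length-two subword replaced by a hole, we obtain $(xy)z\sim\pm D[a_1 b_1]$ (possibly with the two factors reversed). Since the hole of $D$ is for a subword of length $2$, plugging in any element of $\Lin_1(S)$ gives an element of $\Lin_{n-1}(S)$; and for $(i,i')\ne(1,1)$ the word $D[a_i b_{i'}]$ lies in $\Lin_{n-1}(S)$, because one of $a_i$, $b_{i'}$ then occurs both in the plugged subword and in $D$, and those two occurrences are swappable by the substitution principle. Therefore a linear dependence of the $pq$ products $\{a_i b_{i'}\}$ modulo $\Lin_1(S)$, plugged into $D$, would place $(xy)z$ in $\Lin_{n-1}(S)$; so these products are linearly independent modulo $\Lin_1(S)$, giving $d_2(S)\ge pq$.

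The hard part is the reduction step for $d_2(S)$: the pulling-out machinery always keeps at least one representative of each of the three classes, so it never reaches length $2$ on its own, and one must instead leave one of $x$, $y$, $z$ at length $2$ and collapse the other two subwords as far as possible, routing letters into pullable positions via the variants (A)--(D) of Lemma~\ref{lemma:descendingly-flexible-length} and the appropriate parities of the numbers of pulled-out letters. Here Lemma~\ref{lemma:descendingly-flexible-bound}(4) and Table~\ref{table:types} do the work: reading off the table, the length-two truncations of $x$, $y$, $z$ each consist of one letter from each of two of the three classes, realizing between them the three products $jk$, $kl$, $jl$, so whichever pair is the largest is realised by the truncation of a subword of length $\ge 2$ — apart from the degenerate situation $j=k=l=1$, where $n=3$, $\max\{jk,kl,jl\}=1$, and $d_2(S)\ge 1$ follows from $(xy)z\notin\Lin_2(S)$ and Lemma~\ref{lemma:full-stabilization}. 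What remains is bookkeeping: verifying that after the reduction the two ``other'' occurrences of each duplicated letter genuinely end up inside $D$ and not in the hole.
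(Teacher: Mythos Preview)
Your proof is correct in substance and follows the same strategy as the paper: use the three swappability classes of sizes $j,k,l$ from Lemma~\ref{lemma:descendingly-flexible-bound}(4), and deduce linear independence of the relevant one- or two-letter words modulo $\Lin_0(S)$ or $\Lin_1(S)$ by substituting into $(xy)z$ and applying Proposition~\ref{proposition:alternating-letters}(2). Your ``substitution principle'' (formal-equivalence chains survive replacing a letter by another element) is exactly what the paper uses implicitly.

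Where you diverge is in the machinery you deploy for $d_2(S)$. You propose to invoke the pulling-out procedure of Proposition~\ref{proposition:pulling-out} and Remark~\ref{remark:pulling-out} to rearrange $(xy)z$ into a word ``of the same length $n$'' in which some subword has length~$2$ with one letter from each of the two largest classes; you then flag this reduction as ``the hard part'' and sketch how to route letters via the variants (A)--(D) of Lemma~\ref{lemma:descendingly-flexible-length}. The paper does none of this. It simply observes that, with the letters named so that the three types are $X=\{x_1,\dots,x_j\}$, $Y=\{y_1,\dots,y_k\}$, $Z=\{z_1,\dots,z_l\}$, the innermost two-letter subword of $x$ (respectively $y$, $z$) is already $x_{j-1}z_l$ (respectively $y_{k-1}x_j$, $z_{l-1}y_k$), consisting of one letter from each of two distinct types, and all remaining letters of those two types are already elsewhere in $(xy)z$. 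No rearrangement is needed: the hole $D[-]$ is just $(xy)z$ with that innermost subword replaced. Replacing it by any other $x_hz_i$ duplicates a letter within a swappability class and lands in $\Lin_{n-1}(S)$, so $W_{x,z}=\{x_hz_i\}$ is independent modulo $\Lin_1(S)$ and $d_2\ge jl$; the other two products are obtained symmetrically. The degenerate cases where some of $j,k,l$ equal~$1$ are then handled by the same argument applied to whichever of $x,y,z$ still has length $\ge 2$, or directly when $n=3$. Your route would work, but it imports heavier tools (Lemma~\ref{lemma:descendingly-flexible-length}, Proposition~\ref{proposition:pulling-out}) where the paper gets by with a one-line structural observation; also, your phrasing ``reduce $(xy)z$ to a word of the same length $n$'' conflates the length-reducing pulling-out with the length-preserving formal equivalences, which is worth cleaning up.
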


\begin{proof}
By Lemma~\ref{lemma:descendingly-flexible-bound}(4), the letters of $(xy)z$ can be divided into three types whose sizes are $j$, $k$ and $l$, and the elements of each type are pairwise swappable. Similarly to the proof of Corollary~\ref{corollary:descendingly-alternative-bound}, we may assume that $|S| = d_1(S) = d_1$. Then it follows from Proposition~\ref{proposition:alternating-letters}(3) that $d_1(S) \geq \max \{ j, k, l \}$.

For simplicity of notation, in the proof of the second part of this lemma we may assume without loss of generality that $(xy)z$ is of the type (EOO), so its letters are divided into types according to Table~\ref{table:types}(a). We again denote the types containing the outer letters of $x$, $y$ and $z$ by $X = \{ x_1, \dots, x_j \}$, $Y = \{ y_1, \dots, y_k \}$ and $Z = \{ z_1, \dots, z_l \}$, respectively.

Let $j, k, l \geq 2$. Then $x = L_{x_1} R_{x_2} \dots z_l$, $y = R_{y_1} L_{y_2} \dots x_j$ and $z = R_{z_1} L_{z_2} \dots y_k$, so in the middle of $x$ we have $x_{j-1} z_l$, in the middle of $y$ we have $y_{k-1} x_j$, and in the middle of $z$ we have $z_{l-1} y_k$. We can now prove similarly to Lemma~\ref{lemma:descendingly-flexible-length} that the set 
$$
W_{x,z} = \{ x_h z_i \; | \; 1 \leq h \leq j, \, 1 \leq i \leq l \}
$$
is linearly independent. Indeed, if we replace $x_{j-1} z_l$ in $(xy)z$ with some other two-letter subword $x_h z_i$, then there occur two equal letters which are pairwise swappable, so the resulting word belongs to $\Lin_{n-1}(S)$. Hence there is no a linear expression for $x_{j-1} z_l$ through other elements of $W_{x,z}$ modulo $\Lin_1(S)$. The same is true for all other elements of the set $W_{x,z}$, so $W_{x,z}$ is linearly independent modulo $\Lin_1(S)$. Hence $d_2 = d_2(S) \geq |W_{x,z}| = jl$. Then we prove similarly that $d_2 \geq jk$ and $d_2 \geq kl$, so $d_2 \geq \max \{ jk, kl, jl \}$.

Assume now that one of the values $j, k, l$ is equal to $1$, say, $k = 1$. Then $\max \{ jk, kl, jl \} = jl$, and it is sufficient to show that $d_2 \geq jl$. If $j \geq 2$, then we again consider the set $W_{x,z}$ which is again linearly independent modulo $\Lin_{n-1}(S)$. If we also have $j = 1$, then we have to show that $d_2 \geq l$. If $l = 1$, then this is clearly true, since in this case $(xy)z \notin \Lin_2(S)$ implies $xy \notin \Lin_1(S)$. And if $l \geq 2$, then we consider the set 
$$
W_{z,y} = \{ z_h y_1 \; | \; 1 \leq h \leq l \}
$$
which is also linearly independent modulo $\Lin_{n-1}(S)$, and the statement follows.
\end{proof}

\begin{lemma} \label{lemma:descendingly-flexible-1-subwords}
Let $S \subseteq \A$, $n \geq 3$, and $(xy)z \in S^n \setminus \Lin_{n-1}(S)$ with $l(y) = l(z) = 1$. Then $\dim \Lin_n(S) - d_0 \geq 2^{n-2} + n - 2$.
\end{lemma}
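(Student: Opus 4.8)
The plan is to run the counting scheme of Lemma~\ref{lemma:descendingly-flexible-length} in the degenerate case $l(y)=l(z)=1$, where $(xy)z$ is of type~{\rm(EOO)} or~{\rm(O11)} from Lemma~\ref{lemma:descendingly-flexible-bound}(3), according as $l(x)$ is even or odd. First I would record the structure: with $l(x)=n-2$ and $l(y)=l(z)=1$, Lemma~\ref{lemma:descendingly-flexible-bound}(4) and Table~\ref{table:types}(a) show that the $n$ letters of $(xy)z$ fall into exactly three classes of pairwise swappable letters — one \emph{large} class $C$ of size $n-2$ (the outer letters of $x$ together with $y$) and the two singletons $\{\text{inner letter of }x\}$ and $\{z\}$. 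In particular, by Lemma~\ref{lemma:descendingly-flexible-short-length}, $d_1(S)\ge\max\{n-2,1,1\}=n-2$ and $d_2(S)\ge 1$, and, as in the proof of Corollary~\ref{corollary:descendingly-alternative-bound}, I may assume $|S|=d_1(S)$.

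The key estimate is $d_m(S)\ge\binom{n-2}{m-2}$ for all $3\le m\le n$. To get it I need, for each subset $T\subsetneq C$ with $|T|=n-m\le n-3$ (at least one letter of $C$ left), a word $w_T\in S^m\setminus\Lin_{m-1}(S)$ obtained from $(xy)z$ by pulling the letters of $T$ out, whose surviving $m$ letters again form three swappability classes, with $C\setminus T$ pairwise swappable. Unlike in Lemma~\ref{lemma:descendingly-flexible-length}, this is not supplied by Proposition~\ref{proposition:pulling-out}, whose {\rm(EOO)} and {\rm(O11)} clauses only pull letters out of $y$ and $z$, which here are single letters; so I would argue directly. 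Applying Eq.~\eqref{equation:general-equivalence-1} to $(xy)z$ and then Eqs.~\eqref{equation:general-equivalence-1}--\eqref{equation:general-equivalence-2} repeatedly — each time splitting off the outermost letter of the current left-swapping ($x$-side) factor and moving it across — one shows that $(xy)z$ is formally equivalent, up to sign, to a product $p\cdot G$ or $G\cdot p$ in which $p$ is the inner letter of $x$ and $G$ is a swapping word of length $n-1$ whose outer letters are exactly the members of $C$ and whose inner letter is $z$. From such a representative the outer letters of $G$ can be extracted to the outside one at a time (bringing any chosen member of $C$ into the outermost position first, as in Remark~\ref{remark:pulling-out}), leaving at least one, and throughout the process the three swappability classes persist by the same bookkeeping as in Corollary~\ref{corollary:pulling-out}. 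Setting up this reduction carefully, so that pulling out $T$ yields a well-defined $w_T$ with the stated class structure, is the main technical obstacle.

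Granting the previous step, linear independence of $W_m:=\{\,w_T : T\subsetneq C,\ |T|=n-m\,\}$ modulo $\Lin_{m-1}(S)$ is proved exactly as in Lemma~\ref{lemma:descendingly-flexible-length}. If $W_m$ were dependent, some $w_T$ would be a linear combination of the remaining $w_{T'}$ (with $|T'|=|T|$, $T'\ne T$) plus an element of $\Lin_{m-1}(S)$; pushing the letters of $T$ back into each $w_{T'}$ gives a word $u_{T'}$ of length $n$ whose $C$-letters form $(C\setminus T')\cup T$, and since $|T'|=|T|$ with $T'\ne T$ forces $T\not\subseteq T'$, this set has a repetition, so $u_{T'}$ has two equal pairwise swappable letters and lies in $\Lin_{n-1}(S)$ by Proposition~\ref{proposition:alternating-letters}(2); pushing the same letters into the $\Lin_{m-1}(S)$-summand again produces an element of $\Lin_{n-1}(S)$, and hence $(xy)z\in\Lin_{n-1}(S)$, a contradiction. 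Therefore $d_m(S)\ge|W_m|=\binom{n-2}{n-m}=\binom{n-2}{m-2}$ (for $m=n$ this is trivially $W_n=\{(xy)z\}$).

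It remains to add up: $\dim\Lin_n(S)-d_0=d_1(S)+d_2(S)+\sum_{m=3}^{n}d_m(S)\ge(n-2)+1+\sum_{j=1}^{n-2}\binom{n-2}{j}=(n-1)+(2^{n-2}-1)=2^{n-2}+n-2$, as claimed. (The case $n=3$, where $(xy)z$ is a product of three single letters, is the instance $m=n$ with $T=\varnothing$, so no special treatment is needed.)
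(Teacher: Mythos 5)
You have correctly identified the weak point of your own plan, but the workaround you propose does not close it. The representation $(xy)z\sim\pm x_1 G$ with $G$ a swapping word of length $n-1$, inner letter $z$ and outer letters $C$, is indeed available (e.g.\ for $n=4$, $((x_2x_1)y)z\sim x_1(x_2(zy))$). However, the next step --- extracting the outer letters of $G$ to the outside of the \emph{whole} word --- fails. To pull the outermost letter $t$ of $G$ out of $p\cdot G=p(tG')$ you can only apply Eq.~\eqref{equation:general-equivalence-2}, which gives $p(tG')\sim -G'(tp)$: the letter $t$ gets trapped in the interior length-two factor $tp$ rather than emerging as an outermost $L_t$ or $R_t$. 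For $n=4$ one can enumerate the entire orbit of $((x_2x_1)y)z$ under elementary equivalences (six words up to sign: $((x_2x_1)y)z$, $(zy)(x_2x_1)$, $x_1(x_2(zy))$, $x_1(y(zx_2))$, $(zx_2)(yx_1)$, $((yx_1)x_2)z$); in none of them is a letter of $C=\{x_2,y\}$ in the outermost position --- only $x_1$ and $z$ ever are. This is exactly what Proposition~\ref{proposition:pulling-out} records in its {\rm(O11)} clause. Consequently the words $w_T$ for $\varnothing\ne T\subsetneq C$ are not constructible, the estimate $d_m(S)\ge\binom{n-2}{m-2}$ for $3\le m\le n-1$ is unsupported, and the proof collapses at its core (the arithmetic at the end and the bounds $d_1\ge n-2$, $d_2\ge 1$ are fine but insufficient).

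The paper's proof sidesteps this obstruction entirely: since $(xy)z\notin\Lin_{n-1}(S)$ forces $xy\in S^{n-1}\setminus\Lin_{n-2}(S)$, it discards $z$ and works with $xy=R_{x_1}L_{x_2}\dots s$, a genuine right swapping word of length $n-1$. Its $n-2$ outer letters (your class $C$) \emph{are} literally in outermost positions there, so one can select any subset $I$ of them and form the swapping subword $w_I$ of length $|I|+1$ containing the inner letter $s$; the push-back independence argument then gives $d_{m+1}(S)\ge\binom{n-2}{m}$ for $1\le m\le n-2$, and together with $d_1\ge n-2$ and $d_n\ge 1$ this sums to $2^{n-2}+n-2$. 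If you want to salvage your write-up, replace the extraction-from-$(xy)z$ step by this reduction to the word $xy$.
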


\begin{proof}
Note that $(xy)z \notin \Lin_{n-1}(S)$ implies $xy \in S^{n-1} \setminus \Lin_{n-2}(S)$. Besides, $xy$ is a right swapping word, so we can denote $xy = R_{x_1} L_{x_2} \dots s$ for some $x_1, \dots, x_{n-2}, s \in S$. Similarly to Lemma~\ref{lemma:descendingly-flexible-length}, for any $1 \leq m \leq n-2$ and any sequence $1 \leq i_1 < \dots < i_m \leq n-2$ denoted by $I$ we can define an element $w_I \in S^{m+1}$ by
$$
w_I = \begin{cases}
R_{x_{i_1}} L_{x_{i_2}} \dots s, & n-m \text{ is even},\\
L_{x_{i_1}} R_{x_{i_2}} \dots s, & n-m \text{ is odd}.
\end{cases}
$$
Then the set
$$
W_m = \{ w_I \; | \; |I| = m \} \subseteq S^{m+1}
$$
is linearly independent modulo $\Lin_m(S)$, so $d_{m+1} = d_{m+1}(S) = \dim \Lin_{m+1}(S) - \dim \Lin_m(S) \geq |W_m|$. Besides, by Lemma~\ref{lemma:descendingly-flexible-short-length}, we have $d_1 \geq n - 2$. Thus
\begin{align*}
    \dim \Lin_n(S) - d_0 &{}= d_1 + (d_n + d_2 + \dots + d_{n-1}) \\
    &{}\geq n - 2 + (1 + |W_1| + \dots + |W_{n-2}|) = n - 2 + 2^{n-2}. \qedhere
\end{align*}
\end{proof}

\begin{lemma} \label{lemma:descendingly-flexible-low-length}
Let $S \subseteq \A$ and $d_n(S) \geq 1$ for some $1 \leq n \leq 5$, i.e., $l(S) \geq n$. Then
$$
\dim \Lin_n(S) - d_0 \geq
\begin{cases}
n, & 1 \leq n \leq 2,\\
2n - 1, & 3 \leq n \leq 5.
\end{cases}
$$
Moreover, if $n \in \{ 3, 4 \}$, then $d_k(S) \geq 2$ for any $1 \leq k \leq n-1$.
\end{lemma}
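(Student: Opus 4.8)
The plan is to dispose of $n=1,2$ at once and then to handle $n=3,4,5$ by a finite case analysis governed by the shape theorem Lemma~\ref{lemma:descendingly-flexible-bound}(3) together with the lower-bound Lemmas~\ref{lemma:descendingly-flexible-length}, \ref{lemma:descendingly-flexible-short-length} and~\ref{lemma:descendingly-flexible-1-subwords}. If $1\le n\le 2$ and $d_n(S)\ge 1$, then $d_k(S)\ge 1$ for every $1\le k\le n$ by Proposition~\ref{proposition:full-stabilization}, so $\dim\Lin_n(S)-d_0=\sum_{k=1}^n d_k(S)\ge n$. From now on let $n\ge 3$. As in the proof of Corollary~\ref{corollary:descendingly-alternative-bound} we first replace $S$ by a subset with $|S|=d_1(S)$ and unchanged linear spans $\Lin_k$ (Proposition~\ref{proposition:equal-linear-spans}), so that $d_1=|S|$; by Corollary~\ref{corollary:new-elements} there is a word $w\in S^{(n)}\setminus\Lin_{n-1}(S)$, which by Lemma~\ref{lemma:descendingly-flexible-bound}(1),(3) is formally equivalent to one of the canonical shapes (EOO), (OEE), (O11), (OO), (OE). A parity-and-minimality count on $l(x),l(y),l(z)$ (the total length is even exactly for (EOO) and (OO)) leaves, for each fixed $n\in\{3,4,5\}$, only a very short list of admissible length profiles.

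Two uniform facts feed every case. First, $d_1\ge 2$: otherwise $\A$ is generated modulo $\Lin_0(S)$ by a single element $s$, and Eq.~\eqref{equation:general-flexibility} applied to $a=b=s$ gives $(ss)s,\,s(ss)\in\Lin_2(S)$, so $\Lin_3(S)=\Lin_2(S)$ and $l(S)\le 2$ by Proposition~\ref{proposition:full-stabilization}, contradicting $l(S)\ge n\ge 3$. Second, $d_2\ge 2$: since $l(S)\ge 3$ there is a new length-three word, say $(ab)c\notin\Lin_2(S)$ (the shape $a(bc)$ is symmetric); then $ab\notin\Lin_1(S)$, and $(ab)c\sim-(cb)a$ by Eq.~\eqref{equation:general-equivalence-1} forces $cb\notin\Lin_1(S)$; if $d_2=1$ then $cb\equiv\mu\,ab\pmod{\Lin_1(S)}$ with $\mu\ne 0$, and multiplying by $a$ on the right, using $(ab)a\in\Lin_1(a,b,aa,ab,ba)\subseteq\Lin_2(S)$, gives $(cb)a\in\Lin_2(S)$, a contradiction.

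Now the case work. For $n=3$ the only profile is $(1,1,1)$, so $\dim\Lin_3(S)-d_0\ge d_1+d_2+d_3\ge 2+2+1=5$. For $n=4$ the admissible profiles are $(2,1,1)$ of type (EOO) and the type-(OO) profiles consisting of a left or right swapping word of length $3$ together with a single letter; using Eqs.~\eqref{equation:general-equivalence-1}--\eqref{equation:general-equivalence-2} one checks that the (EOO) word is formally equivalent to an (OO) word, so in every case $w\sim uv$ with, say, $u=s_0(s_1s_2)$ a swapping word of length $3$ and $v$ a single letter (the mirror case is symmetric). Then $w=(s_0(s_1s_2))v\sim-(v(s_1s_2))s_0$, so $v(s_1s_2)\notin\Lin_2(S)$; moreover $v(s_1s_2)$ is independent of $s_0(s_1s_2)$ modulo $\Lin_2(S)$, since otherwise, multiplying a dependence by $s_0$ on the right and using $(s_0(s_1s_2))s_0\in\Lin_1(s_0,s_1s_2,s_0s_0,s_0(s_1s_2),(s_1s_2)s_0)\subseteq\Lin_3(S)$ (Eq.~\eqref{equation:general-flexibility}), we would get $w\in\Lin_3(S)$. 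Hence $d_3\ge 2$ and $\dim\Lin_4(S)-d_0\ge 2+2+2+1=7$; in fact the larger swappability class of $w$ has three distinct letters, so even $d_1\ge 3$. For $n=5$ the admissible profiles are $(3,1,1)$, $(1,2,2)$ and $(1,3,1)$: Lemma~\ref{lemma:descendingly-flexible-1-subwords} handles $(3,1,1)$ (it yields $\dim\Lin_5(S)-d_0\ge 2^{3}+3=11$), Lemma~\ref{lemma:descendingly-flexible-short-length} handles $(1,2,2)$ (it yields $d_1\ge 2$ and $d_2\ge 4$, hence $\dim\Lin_5(S)-d_0\ge 9$), while in the profile $(1,3,1)$ we have $w\sim uv$ with $u$ a swapping word of length $4$, so by Lemma~\ref{lemma:descendingly-flexible-bound}(4) and Proposition~\ref{proposition:alternating-letters}(2) the word $w$ contains four pairwise swappable, hence pairwise distinct, letters of $S$; therefore $d_1=|S|\ge 4$ and $\dim\Lin_5(S)-d_0\ge 4+2+1+1+1=9$. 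Finally, the bounds $d_k(S)\ge 2$ for $1\le k\le n-1$ when $n\in\{3,4\}$ have been produced along the way.

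I expect the delicate point to be the proof that $d_3\ge 2$ for $n=4$: this bound is not delivered by the generic linear-independence machinery of Lemma~\ref{lemma:descendingly-flexible-length} (the profile $(2,1,1)$ leaves a singleton swappability class, so that lemma produces no subwords of length $3$ here), and it must instead be wrung out of the flexibility identity~\eqref{equation:general-flexibility} and the elementary relations~\eqref{equation:general-equivalence-1}--\eqref{equation:general-equivalence-2} applied to the specific word $w$. One also has to verify that the informal passage from an (EOO) word to an (OO) word respects the partition of its letters into swappability classes given by Lemma~\ref{lemma:descendingly-flexible-bound}(4), so that the same short computation covers both types uniformly.
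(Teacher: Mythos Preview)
Your argument is largely sound and close in spirit to the paper's, but there is one genuine gap in the $n=4$ case.

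You assert that ``using Eqs.~\eqref{equation:general-equivalence-1}--\eqref{equation:general-equivalence-2} one checks that the (EOO) word is formally equivalent to an (OO) word''. This is false. A length-$4$ (EOO) word has the bracketing $((x_2x_1)y)z$; repeatedly applying the elementary equivalences $(ab)c\sim-(cb)a$ and $a(bc)\sim-c(ba)$ (at the root or to a subword) one only reaches bracketings of the shapes $((\cdot\cdot)\cdot)\cdot$, $(\cdot\cdot)(\cdot\cdot)$ and $\cdot(\cdot(\cdot\cdot))$, never the (OO) shapes $(\cdot(\cdot\cdot))\cdot$ or $\cdot((\cdot\cdot)\cdot)$. (Indeed, each elementary move only permutes the three top-level factors of a sub-bracketing and hence cannot produce an alternating $LRL$/$RLR$ pattern from $RRL$/$LLR$.) Consequently your $d_3\ge 2$ argument, which is written for $w=(s_0(s_1s_2))v$ and its mirror, does not cover the (EOO) profile, and the parenthetical ``the larger swappability class has three distinct letters'' likewise fails there: for (EOO) the classes from Table~\ref{table:types}(a) have sizes $1,2,1$.

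The fix is exactly what the paper does. Reduce the (EOO) word to the $(ab)(cd)$ form, observe that $((cd)b)a\sim-(ab)(cd)\sim d(c(ab))$, so both $(cd)b$ and $c(ab)$ lie outside $\Lin_2(S)$; if $d_3=1$ these are proportional modulo $\Lin_2(S)$, whence $(c(ab))a\notin\Lin_3(S)$, which is now a genuine (OO)-type word to which your argument (or the paper's ``three pairwise swappable letters'' observation) applies, contradicting $d_3=1$. With this patch the rest of your proof --- the uniform bounds $d_1,d_2\ge 2$, and the $n=5$ analysis via Lemmas~\ref{lemma:descendingly-flexible-1-subwords} and~\ref{lemma:descendingly-flexible-short-length} and the four-element swappability class in the (OE) case --- is correct and, for $n=5$, offers pleasant alternatives to the paper's ``pull out one letter to get $d_4\ge 2$'' route.
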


\begin{proof}
Recall that $\dim \Lin_n(S) - d_0 = d_1 + \dots + d_n$. By Proposition~\ref{proposition:full-stabilization}, if $d_m = 0$ for some $m \in \N$, then $d_k = 0$ for all $k \geq m$. Hence the statement is readily true for $n \leq 2$.

Let now $n = 3$. Then we may assume without loss of generality that there exists a word $(ab)c \in S^3 \setminus \Lin_2(S)$. We have $(ab)c \sim -(cb)a$, so the elements $ab, cb \in S^2$ are linearly independent modulo $\Lin_1(S)$, and $a, c \in S$ are linearly independent modulo $\Lin_0(S)$. Hence $d_3 \geq 1$ and $d_2, d_1 \geq 2$, so $\dim \Lin_3(S) - d_0 \geq 5$.

Assume that $n = 4$. If there exists some $(a(bc))d$ or $a((bc)d)\in S^4 \setminus \Lin_3(S)$, then there is a class of p.s.l. which consists of at least three elements, so one can easily show that $d_1, d_2, d_3 \geq 3$, and thus $\dim \Lin_4(S) - d_0 \geq 10$. Otherwise, there exists a word $(ab)(cd) \in S^4 \setminus \Lin_3(S)$, since $((cd)b)a \sim -(ab)(cd) \sim d(c(ab))$. Assume that $d_3 = 1$. Then $(cd)b, c(ab) \in S^3 \setminus \Lin_2(S)$ are linearly dependent modulo $\Lin_2(S)$. It then follows that we can replace $(cd)b$ with $c(ab)$ in $((cd)b)a \in S^4 \setminus \Lin_3(S)$ and obtain that $(c(ab))a \in S^4 \setminus \Lin_3(S)$. But we have already shown that in this case $d_3 \geq 3$, a contradiction. Therefore, $d_3 \geq 2$, and, according to the previous paragraph, we also have $d_2, d_1 \geq 2$. Thus $\dim \Lin_4(S) - d_0 \geq 7$.

Finally, let $n = 5$, and consider any $w \in S^5 \setminus \Lin_4(S)$. Then, by Lemma~\ref{lemma:descendingly-flexible-bound}(3), we may assume that $w$ belongs to one of the types (O11), (OEE) or (OE). In the case (O11), Lemma~\ref{lemma:descendingly-flexible-1-subwords} implies that $\dim \Lin_5(S) - d_0 \geq 2^3 + 3 = 11$. In the case (OEE), we assume that $w = (xy)z$ with $l(x) = 1$ and $l(y) = l(z) = 2$, so, by Lemma~\ref{lemma:descendingly-flexible-bound}(4), the outer letter of $z$ belongs to the type which consists of two elements. Thus we can pull it out from $(xy)z$ to obtain that $d_4 \geq 2$. In the case (OE), we may assume that $w = uv$ with $l(u) \in \{ 2, 4\}$, so, again by Lemma~\ref{lemma:descendingly-flexible-bound}(4), the type which contains the outer letters of $u$ consists of at least two elements. We can pull out an outer letter from $u$ and again obtain that $d_4 \geq 2$. Besides, we have already proved that $d_3, d_2, d_1 \geq 2$, so in the last two cases we have $\dim \Lin_5(S) - d_0 \geq 9$.
\end{proof}

\begin{lemma} \label{lemma:descendingly-flexible-swapping-subwords}
Let $S \subseteq \A$, $uv \in S^n \setminus \Lin_{n-1}(S)$, and one of the following conditions holds:
\begin{enumerate}[{\rm (1)}]
    \item both $u$ and $v$ are left swapping, $l(u) \geq 3$, either $l(u)$ is even or $l(v)$ is odd;
    \item both $u$ and $v$ are right swapping, $l(v) \geq 3$, either $l(v)$ is even or $l(u)$ is odd;
    \item $u$ is left swapping and $v$ is right swapping, at least one of the values $l(u)$ and $l(v)$ is odd, and if one of them equals $1$, then the other one is odd and greater than $1$.
\end{enumerate}
Then $\dim \Lin_n(S) - d_0 \geq 3 \cdot 2^{n-4} + n - 3$.
\end{lemma}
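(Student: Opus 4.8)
The plan is to imitate the proofs of Lemmas~\ref{lemma:descendingly-flexible-length}, \ref{lemma:descendingly-flexible-short-length} and~\ref{lemma:descendingly-flexible-1-subwords}. In each of the three configurations, a single application of Eqs.~\eqref{equation:general-equivalence-1}--\eqref{equation:general-equivalence-2} together with the decomposition of a left (right) swapping word as a single outer letter times a right (left) swapping word lets me rewrite $uv$ as a word of the form $(xy)z$ or $z'(y'x')$ in which one of the three subwords has length $1$; by Lemma~\ref{lemma:descendingly-flexible-bound}(3)--(4) such a word falls into type {\rm(OO)} or {\rm(OE)} and hence has only two classes of pairwise swappable letters $T_1, T_2$ with $|T_1| + |T_2| = n$, while Proposition~\ref{proposition:pulling-out}, Corollary~\ref{corollary:pulling-out} and Remark~\ref{remark:pulling-out} describe how these classes behave under pulling letters out. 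As in the proof of Lemma~\ref{lemma:descendingly-flexible-short-length} I may assume $|S| = d_1(S) = d_1$; then Proposition~\ref{proposition:alternating-letters}(3) gives $d_1 \ge \max\{|T_1|,|T_2|\}$, and $uv \notin \Lin_{n-1}(S)$ gives $d_n(S) \ge 1$. For $n \le 5$ the claim is immediate from Lemma~\ref{lemma:descendingly-flexible-low-length} (and it is vacuous for $n \le 3$), so from now on I assume $n \ge 6$.

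The core of the argument is a pull-out construction. For each length $m$ with $3 \le m \le n-1$ and each pair of nonempty subsets $I \subseteq T_1$, $J \subseteq T_2$ with $|I|+|J|=m$ whose cardinalities are compatible, in the sense of Proposition~\ref{proposition:pulling-out} and Remark~\ref{remark:pulling-out}, with pulling the complementary letters out of $uv$, I form the word $w_{I,J} \in S^m$ so obtained. Exactly as in Lemma~\ref{lemma:descendingly-flexible-length}, pushing the removed letters back into any other word $w_{I',J'}$ of the same total length produces a word containing two equal letters lying in a common swappability class, hence lying in $\Lin_{m-1}(S)$ by Proposition~\ref{proposition:alternating-letters}(2); therefore the family $W_m = \{ w_{I,J} \mid |I|+|J|=m \}$ is linearly independent modulo $\Lin_{m-1}(S)$, so $d_m(S) \ge |W_m|$. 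Combining this with the two-class analogues of the bounds on $d_1$ and $d_2$ from Lemma~\ref{lemma:descendingly-flexible-short-length}, and with the estimate $d_n(S) \ge 1$ noted above, the inequality $\dim\Lin_n(S) - d_0 = d_1 + \dots + d_n \ge 3\cdot 2^{n-4} + n - 3$ should follow by summation.

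The main obstacle, just as in Lemma~\ref{lemma:descendingly-flexible-length}, is the parity bookkeeping: pulling out an even versus an odd number of letters from a subword toggles its left-/right-swapping type (Corollary~\ref{corollary:pulling-out}), so only some subset pairs $(I,J)$ correspond to genuine subwords, and I must determine precisely which families of words occur in each of the three configurations and verify that their total size is at least $3\cdot 2^{n-4} + n - 3$; the somewhat lossy leading coefficient $3/16$ is exactly what makes a uniform statement possible. Two special cases need separate treatment. First, when one of $T_1$, $T_2$ is a singleton --- which is forced in configuration~(3) when $l(u)=1$ or $l(v)=1$, and is possible in~(1) and~(2) --- nothing can be pulled out of that class, but the other class then consists of $n-1$ pairwise swappable letters, and pulling arbitrary subsets of them out of $uv$ produces, for every $2 \le m \le n$, at least $\binom{n-1}{m-1}$ words that are linearly independent modulo $\Lin_{m-1}(S)$; together with $d_1 \ge n-1$ this already gives $\dim\Lin_n(S) - d_0 \ge 2^{n-1} + n - 2$, comfortably more than needed. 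Second, the finitely many small values of $n$ for which the general count is too crude are again absorbed by Lemma~\ref{lemma:descendingly-flexible-low-length}. Finally, one should check that these reductions, together with the left--right mirror symmetry identifying configurations~(1) and~(2), exhaust all cases.
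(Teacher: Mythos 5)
Your overall strategy (pull letters out, show the resulting families $W_m$ are independent modulo $\Lin_{m-1}(S)$, and sum) is exactly the paper's, but the structural claim on which your count rests is false. You assert that in every configuration one elementary rewriting brings $uv$ to type {\rm(OO)} or {\rm(OE)}, hence to a word with only \emph{two} classes of pairwise swappable letters of sizes summing to $n$. This fails precisely in the mismatched-parity sub-cases: in configuration~(1) with $l(u),l(v)$ both odd or both even, and in configuration~(3) with $l(u)$ odd and $l(v)$ even. There the decomposition you describe (peeling one outer letter off a factor) lands you in type {\rm(EOO)} or {\rm(OEE)} --- e.g.\ for two left swapping factors of odd lengths one gets $uv=z'(y'x')$ with $l(z')$ odd, $l(y')=1$, $l(x')$ even --- which by Lemma~\ref{lemma:descendingly-flexible-bound}(4) carries \emph{three} types of letters (of sizes $l(u)$, $l(v)-1$ and $1$ in that example). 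Note that {\rm(OE)} forces $n$ odd and {\rm(OO)} forces a left-swapping-times-right-swapping shape with both lengths odd, so parity and orientation rule your target types out in these sub-cases, and no chain of elementary equivalences you exhibit produces them. With three types the admissible pull-outs are more constrained (a letter of each type must remain, and only removals of prescribed parity are allowed from each type), so the count is $(2^{l(u)-1}-1)\cdot 2^{l(v)-1}$ or $2^{l(u)-2}(2^{l(v)}-1)$ rather than your $2^{n-2}$, and one must add the separate bounds $d_n\geq 1$ and $d_1\geq\max\{\,\cdot\,\}$ from Lemma~\ref{lemma:descendingly-flexible-short-length} to reach $3\cdot 2^{n-4}+n-3$; the bound is in fact tight for this analysis at $l(u)=3$ in the both-odd sub-case, which is exactly why the constant is $3/16$ and not the $1/4$ your uniform two-class picture would deliver. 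That your scheme would prove a strictly stronger inequality is itself a warning that its premise cannot hold.

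A secondary over-count of the same kind: in your singleton-class digression you pull out arbitrary subsets of $n-1$ pairwise swappable letters and claim $\binom{n-1}{m-1}$ independent words of each length $m$, totalling $2^{n-1}+n-2$. The inner letter of the long factor cannot be removed, so only its $n-2$ outer letters are available and the correct total is $2^{n-2}+n-2$, as in Lemma~\ref{lemma:descendingly-flexible-1-subwords}. This still exceeds $3\cdot 2^{n-4}+n-3$, so the conclusion survives there, but it is the same error of treating every subset of a swappability class as removable.
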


\begin{proof}
\leavevmode
\begin{enumerate}[{\rm (1)}]
\item If $l(u)$ is even and $l(v)$ is odd, then $uv$ has the type (OE), and the division of its letters into types is described by Table~\ref{table:types}(c). Hence type~I which contains the outer letters of~$u$ consists of $l(u)$ elements. By Proposition~\ref{proposition:pulling-out}, we can pull out an odd number of letters of type~I to obtain a word of the form $vu'$, where $u'$ is a subword of $u$ of odd length. Note that, since $l(u)$ is even, at least one letter of type~I always remains. After that we pull out an arbitrary number of outer letters from $v$. Then, similarly to the proof of Lemma~\ref{lemma:descendingly-flexible-length}, we can show that for each $m \in \mathbb{N}$ the resulting set $W_m$ of subwords of length $m$ is linearly independent modulo $\Lin_{m-1}(S)$, and thus $d_m \geq |W_m|$. Since there are $2^{l(u) - 1}$ ways to choose an odd number of letters of type~I, and there are $l(v) - 1$ outer letters in $v$, we obtain 
\begin{align*}
\dim \Lin_n(S) - d_0 &{}= d_1 + \dots + d_n \\
&{}\geq |W_1| + \dots + |W_n| \\
&{}= 2^{l(u) - 1} \cdot 2^{l(v) - 1} = 2^{n - 2} \\
&{}\geq 3 \cdot 2^{n-4} + n - 3    
\end{align*}
for any $n \geq 4$.

If $l(u)$ and $l(v)$ are both odd, then we set $u = z'$ and decompose $v = y'x'$ with $l(y') = 1$ and $x'$ right swapping of even length, so $uv = z'(y'x')$ has the type (EOO). Thus its letters are divided into types according to Table~\ref{table:types}(a), and hence the inner letter of~$v$ is swappable with outer letters of~$u$. Therefore, type~I which contains the outer letters of $u$ again consists of at least $l(u)$ elements. We can pull out an odd number of letters of type~I such that at least one letter of this type remains as the inner letter of $v$, and then pull out an arbitrary number of outer letters from $v$. If we denote the resulting sets of subwords of length $m$ by $W_m$, then we again obtain $d_m \geq |W_m|$. Note that $W_m$ is nonempty for $2 \leq m \leq n-1$ only, so we have bounds only for $d_2, \dots, d_{n-1}$. Since $l(u)$ is odd, we have to distract $1$ from $2^{l(u) - 1}$ ways to choose an even number of letters of type~I, and thus $|W_2| + \dots + |W_{n-1}| = (2^{l(u) - 1} - 1) \cdot 2^{l(v) - 1}$. We also have $d_n \geq 1$ and, by Lemma~\ref{lemma:descendingly-flexible-short-length}, $d_1 \geq \max\{ l(u), l(v) - 1 \}$. Therefore,
\begin{align*}
    \dim \Lin_n(S) - d_0 &{}= d_1 + \dots + d_n \\
    &{}\geq \max\{ l(u), l(v) - 1 \} + (2^{l(u) - 1} - 1) \cdot 2^{l(v) - 1} + 1 \\
    &{}= 2^{n-2} - 2^{n - l(u) - 1} + \max\{ l(u), n - l(u) - 1 \} + 1.
\end{align*}
The minimum of the right-hand side is achieved when $l(u)$ takes on its minimal value, i.e., $l(u) = 3$. Hence $\dim \Lin_n(S) - d_0 \geq 3 \cdot 2^{n-4} + n - 3$.

Finally, let $l(u)$ and $l(v)$ be both even. Then we denote $v = x$ and decompose $u = zy$, where $l(z) = 1$ and $y$ is right swapping of odd length. Thus $uv = (zy)x \sim -(xy)z$ is of the type (EOO), so, according to Table~\ref{table:types}(a), type~II, which contains the outer letters of $v = x$, also contains the inner letter of $y$, i.e., the inner letter of $u$. Hence type~II consists of $l(v)$ elements. Type~III, which contains the outer letters of $u$, consists of $l(u) - 1$ elements. We pull out an odd number of outer letters of $u$, and then we pull out an arbitrary number of letters  of type~II such that at least one of them remains as the inner letter of $u$. Similarly to the previous case, we denote the set of resulting subwords of length $m$ by $W_m$, and $d_m \geq |W_m|$ for any $2 \leq m \leq n-1$. We also have $d_n \geq 1$ and $d_1 \geq \max\{ l(u) - 1, l(v) \}$. Thus 
\begin{align*}
    \dim \Lin_n(S) - d_0 &= d_1 + \dots + d_n \\
    &\geq \max\{ l(u) - 1, l(v) \} + 2^{l(u) - 2} \cdot (2^{l(v)} - 1) + 1 \\
    &= 2^{n-2} - 2^{n - l(v) - 2} + \max\{ n - l(v) - 1, l(v) \} + 1.
\end{align*}
The minimum of the right-hand side is achieved when $l(v)$ takes on its minimal value, and since $l(v)$ is even, this is $l(v) = 2$. We again have $\dim \Lin_n(S) - d_0 \geq 3 \cdot 2^{n-4} + n - 3$.

\item This case is symmetric to the previous one.

\item By symmetricity, we may assume without loss of generality that either both $l(u)$ and $l(v)$ are odd, or $l(u)$ is odd and $l(v)$ is even.

In the first case $uv$ is of the type (OO), so, according to Table~\ref{table:types}(c), its elements are divided into two types of sizes $l(u)$ and $l(v)$. We can pull out an even number of letters of type~I to obtain a word of the form $u'v$, where $u'$ is a subword of odd length in $u$, and then pull out an even number of letters of type~II. Since $l(u)$ and $l(v)$ are odd, at least one letter of each type always remains. Thus $\dim \A - d_0 \geq 2^{l(u)-1} \cdot 2^{l(v) - 1} = 2^{n-2} \geq 3 \cdot 2^{n-4} + n - 3$ for any $n \geq 3$.

In the second case $l(u) \geq 3$, so we denote $v = z$ and decompose $u = xy$, where $l(x) = 1$ and $y$ is right swapping of even length. Then $uv = (xy)z$ has the type (OEE), so the division of its letters into types is described by Table~\ref{table:types}(b). Thus type~II which contains outer letters of $v = z$ also contains the inner letter of $y$, i.e., the inner letter of $u$, so type~II consists of $l(v)$ elements. We pull out an even number of outer letters of $u$, and then pull out an arbitrary number of letters of type~II such that at least one of them remains as the inner letter of $u$. Then $|W_2| + \dots + |W_{n-1}| = 2^{l(u) - 2} \cdot (2^{l(v)} - 1)$, $d_m \geq |W_m|$ for all $2 \leq m \leq n-1$, $d_n \geq 1$ and $d_1 \geq \max\{ l(u) - 1, l(v) \}$. Hence the bound for $\dim \A - d_0$ is the same as in the last paragraph of case~(1). \qedhere
\end{enumerate}
\end{proof}

\begin{lemma} \label{lemma:descendingly-flexible-high-length}
Let $S \subseteq \A$ and $d_n(S) \geq 1$ for some $n \geq 6$, i.e., $l(S) \geq n$. Then $\dim \Lin_n(S) - d_0 \geq 3 \cdot 2^{n-4} + n - 3$.
\end{lemma}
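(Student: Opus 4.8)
The plan is to use the classification of irreducible words in Lemma~\ref{lemma:descendingly-flexible-bound}(3) to split the argument into a handful of cases and to treat each of them with the estimate best suited to its shape. First I would fix a generating set $S$ with $d_n(S)\geq 1$ and pick $w\in S^n\setminus\Lin_{n-1}(S)$; by Corollary~\ref{corollary:new-elements} we may take $w\in S^{(n)}$, and by Lemma~\ref{lemma:descendingly-flexible-bound}(3) we may assume that $w$ is $\pm$-equivalent to a word $(xy)z$ (or $z'(y'x')$) of one of the types {\rm (EOO)}, {\rm (OEE)}, {\rm (O11)}, {\rm (OO)} or {\rm (OE)}; put $j=l(x)$, $k=l(y)$, $l=l(z)$, so that $j+k+l=n$. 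The three ``two-factor'' types are settled at once by the preceding lemmas: if $w$ is of type {\rm (O11)} then $l(y)=l(z)=1$, and Lemma~\ref{lemma:descendingly-flexible-1-subwords} gives $\dim\Lin_n(S)-d_0\geq 2^{n-2}+n-2=4\cdot 2^{n-4}+n-2>3\cdot 2^{n-4}+n-3$; if $w$ is of type {\rm (OO)} or {\rm (OE)}, then $w$ is $\pm$-equivalent to a product $uv$ of two swapping subwords meeting one of the conditions (1)--(3) of Lemma~\ref{lemma:descendingly-flexible-swapping-subwords} (for {\rm (OE)} one first rewrites $(xy)z\sim\mp(zy)x$, or the mirror identity $z'(y'x')\sim\mp x'(y'z')$, so as to obtain two subwords of the same handedness, one of which has even length $\geq 4$), whence $\dim\Lin_n(S)-d_0\geq 3\cdot 2^{n-4}+n-3$ immediately.

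It remains to treat {\rm (EOO)} and {\rm (OEE)}, which is the technical core. If one of the three subwords has length $1$ --- necessarily $y$ or $z$ in the {\rm (EOO)} case, or $x$ in the {\rm (OEE)} case --- then $(xy)z$ reduces to a case already handled: if $l(y)=l(z)=1$ we invoke Lemma~\ref{lemma:descendingly-flexible-1-subwords}, and otherwise absorbing the length-one factor into an adjacent subword turns $w$ into a product of two swapping subwords --- of opposite handedness when $l(x)=1$, of equal handedness when $l(y)=1$ or $l(z)=1$ --- with a factor of the parity and length demanded by Lemma~\ref{lemma:descendingly-flexible-swapping-subwords}. So I may assume $j,k,l\geq 2$. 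Combining the bounds $d_1\geq\max\{j,k,l\}$, $d_2\geq\max\{jk,kl,jl\}$ from Lemma~\ref{lemma:descendingly-flexible-short-length}, the bounds $d_m\geq|W^X_m|$ for all $X\in\{A,B,C,D\}$ and $3\leq m\leq n-1$ from Lemma~\ref{lemma:descendingly-flexible-length}, and $d_n\geq 1$, one obtains $$\dim\Lin_n(S)-d_0\;\geq\;\max\{j,k,l\}+\max\{jk,kl,jl\}+\max_X\sum_{m=3}^{n-1}|W^X_m|+1.$$ Each of the quantities $\sum_m|W^X_m|$ is a product of three factors of the form $2^{\alpha}$ or $2^{\alpha}-1$ whose exponents add up to $n-2$ (the $-1$'s bookkeeping the parity restrictions built into the pulling procedure), hence is of order $2^{n-2}=4\cdot 2^{n-4}$, which already exceeds $3\cdot 2^{n-4}+n-3$ unless one of $j,k,l$ is very small.

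The genuinely tight configurations are those in which the ``special'' subword $x$ is short and so are the other two --- essentially $(l(x),l(y),l(z))\in\{(2,3,3),(4,3,3)\}$ for {\rm (EOO)} and $(l(x),l(y),l(z))=(3,2,2)$ for {\rm (OEE)} --- where the $-1$'s above are no longer negligible. In those cases I would distinguish according to whether $W^A_{n-1}\cup W^B_{n-1}$ is linearly independent modulo $\Lin_{n-2}(S)$: if it is, one may add $d_{n-1}\geq|W^A_{n-1}|+|W^B_{n-1}|=k+l$ (Remark~\ref{remark:descendingly-flexible-length}) on top of the bounds for $d_3,\dots,d_{n-2}$ and finish the arithmetic directly; if it is not, then Lemma~\ref{lemma:pulling-one-letter-subwords} produces a new word in $S^n\setminus\Lin_{n-1}(S)$, again of type {\rm (EOO)} or {\rm (OEE)}, whose length triple $(l(y)-1,l(z),l(x)+1)$ or $(l(z)-1,l(x)+1,l(y))$ is more favourable, so a short induction on $l(x)$ --- with base cases $l(x)=2$ for {\rm (EOO)} and $l(x)=3$ for {\rm (OEE)}, treated directly above, and Lemma~\ref{lemma:descendingly-flexible-low-length} covering any short lengths encountered en route --- reduces everything to a configuration for which the displayed inequality already suffices. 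The part I expect to be hardest is exactly this endgame: verifying the borderline triples and ensuring that the recursion driven by Lemma~\ref{lemma:pulling-one-letter-subwords} terminates at a word whose binomial-coefficient count by itself yields the bound.
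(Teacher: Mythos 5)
Your overall strategy is exactly the paper's: split by the five types of Lemma~\ref{lemma:descendingly-flexible-bound}(3), dispose of {\rm(O11)}, {\rm(OO)}, {\rm(OE)} and the degenerate three-factor configurations via Lemmas~\ref{lemma:descendingly-flexible-1-subwords} and~\ref{lemma:descendingly-flexible-swapping-subwords}, and attack the remaining {\rm(EOO)}/{\rm(OEE)} cases by summing $d_1\geq\max\{j,k,l\}$, $d_2\geq\max\{jk,kl,jl\}$, $d_n\geq 1$ and the counts $|W^X_m|$, with Lemma~\ref{lemma:pulling-one-letter-subwords} covering the dependence of $W^A_{n-1}\cup W^B_{n-1}$. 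The gap is in the endgame, which you flag as hardest but also mischaracterize. First, the tight configurations are not the finite list $(2,3,3)$, $(4,3,3)$, $(3,2,2)$: they are infinite families in which \emph{one} subword is long and the other two short, e.g.\ $(j,k,l)=(2,3,n-5)$ and $(n-6,3,3)$ for {\rm(EOO)} and $(n-4,2,2)$ for {\rm(OEE)}. For these, $\max_X\Sigma^X$ is only about $3\cdot 2^{n-4}$ to $3.5\cdot 2^{n-4}$, not the ``order $2^{n-2}=4\cdot 2^{n-4}$'' your heuristic promises, so the claim that the displayed inequality ``already suffices'' away from your listed triples is false as stated; one genuinely has to carry out the minimization over $(j,k,l)$ that the paper performs (indeed, for $(n-6,3,3)$ the naive bound gives $3\cdot 2^{n-4}+4n-35$, which undershoots the target precisely at $n=10$, i.e.\ the triple $(4,3,3)$ you do list — but verifying the rest of the family still requires the computation).

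Second, the recursion you build on Lemma~\ref{lemma:pulling-one-letter-subwords} is not an induction on $l(x)$ and would not obviously terminate as described: the new triple is $(l(y)-1,\,l(z),\,l(x)+1)$, which does not decrease $l(x)$. What actually happens in the paper is a single-step reduction: in the subcase $l\geq k$, $j>k$ where the extra summand $|W^B_{n-1}|=k$ is needed, dependence of $W^A_{n-1}\cup W^B_{n-1}$ yields a new {\rm(EOO)} word whose shortest subword now sits in the $x$-position, i.e.\ one lands in the \emph{other} subcase ($k\geq j$), which is closed by $\Sigma^C$ alone with no further recursion. Your plan would be repaired by replacing the induction with this one-step case transfer and by replacing the ``order $2^{n-2}$'' heuristic with the explicit minimizations over the parity-constrained triples.
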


\begin{proof}
By Corollary~\ref{corollary:new-elements} and Lemma~\ref{lemma:descendingly-flexible-bound}, $d_n(S) \geq 1$ implies that there exists a word $w \in S^n \setminus \Lin_n(S)$ which has one of the types (EOO), (OEE), (O11), (OO) or (OE). If $w$ is of the type (O11), then the desired statement follows from Lemma~\ref{lemma:descendingly-flexible-1-subwords}, and if $w$ is of the type (OO) or (OE), then it follows from Lemma~\ref{lemma:descendingly-flexible-swapping-subwords}.

Assume now that $w = (xy)z$ is of the type (EOO). Then $n$ is even. If $l(y) = l(z) = 1$, then we again use Lemma~\ref{lemma:descendingly-flexible-1-subwords}. If $l(y) = 1$ and $l(z) \geq 3$, then we can consider $u = xy$ and $v = z$ which are both right swapping and of odd length, and then apply Lemma~\ref{lemma:descendingly-flexible-swapping-subwords}(2) to $uv$. If $l(y) \geq 3$ and $l(z) = 1$, then $w = (xy)z \sim - (zy)x$, so we consider $u = zy$ and $v = x$ which are both left swapping of even length, and then apply Lemma~\ref{lemma:descendingly-flexible-swapping-subwords}(1) to $uv$.

Let now $l(y), l(z) \geq 3$. We denote $l(x) = j$, $l(y) = k$ and $l(z) = l$. Consider the sets $W_m^X$ from Lemma~\ref{lemma:descendingly-flexible-length}. Then $d_3 + \dots + d_{n-1} \geq \Sigma^X$, where
$$
\Sigma^X = |W_3^X| + \dots + |W_{n-1}^X| =
\begin{cases}
    2^{k-1} \cdot (2^{l-1} - 1) \cdot (2^j - 1), & X = A,\\
    2^{l-1} \cdot (2^{k-1} - 1) \cdot (2^j - 1), & X = B,\\
    (2^{l-1} - 1) \cdot 2^{j-1} \cdot (2^k - 1), & X = C,\\
    (2^{k-1} - 1) \cdot 2^{j-1} \cdot (2^l - 1), & X = D.
\end{cases}
$$
Besides, $d_n \geq 1$ and, by Lemma~\ref{lemma:descendingly-flexible-short-length}, $d_1 \geq \max \{ j, k, l \}$ and $d_2 \geq \max \{ jk, kl, jl \}$. It also follows from Lemma~\ref{lemma:descendingly-flexible-length} applied to $W_{n-1}^A$ and $W_{n-1}^B$ that $d_{n-1} \geq \max \{ k, l \}$, and Remark~\ref{remark:descendingly-flexible-length} implies that $|W_{n-1}^C| = |W_{n-1}^D| = 0$. We may assume without loss of generality that $l \geq k$. Then $\Sigma^A \geq \Sigma^B$ and $\Sigma^C \geq \Sigma^D$. 
\begin{itemize}
    \item If, moreover, $k \geq j$, then $\Sigma^C \geq \Sigma^A$, so in this case we can write
    \begin{align*}
    \dim \Lin_n(S) - d_0 &{}= d_1 + d_2 + (d_3 + \dots + d_{n-2}) + d_{n-1} + d_n\\
    &{}\geq l + kl + (2^{l-1} - 1) \cdot 2^{j-1} \cdot (2^k - 1) + l + 1\\
    &{}= 2^{n-2} + 2^{n-k-l-1} - 2^{n-l-1} - 2^{n-k-2} + l (k+2) + 1.
    \end{align*}
    Minimizing the right-hand side over all $k$ and $l$ such that $n \geq l \geq k \geq n - l - k \geq 2$ with $k$ and $l$ being odd, we obtain that the minimum is achieved for $j = 2$, $k = 3$ and $l = n-5$, which is possible for even values of $n \geq 8$ only. Hence
    \begin{align*}
    \dim \Lin_n(S) - d_0 &{}\geq 2^{n-2} + 2 - 2^4 - 2^{n-5} + 5(n-5) + 1 \\
    &{}= 7 \cdot 2^{n-5} + 5n - 38 \geq 3 \cdot 2^{n-4} + n - 3.
    \end{align*}
    \item Otherwise, we have $\Sigma^A \geq \Sigma^C$. Moreover, if the set $W^A_{n-1} \cup W^B_{n-1}$ is linearly dependent modulo $\Lin_{n-2}(S)$, then, by Lemma~\ref{lemma:pulling-one-letter-subwords}, there exists a word $(\tilde{x}\tilde{y})\tilde{z} \in S^n \setminus \Lin_{n-1}(S)$ with $l(\tilde{x}) = k-1$, $l(\tilde{y}) = l$ and $l(\tilde{z}) = j + 1$, so $l(\tilde{y}), l(\tilde{z}) \geq l(\tilde{x})$, and we fall into the previous case. Therefore, we may assume that $W^A_{n-1} \cup W^B_{n-1}$ is linearly independent modulo $\Lin_{n-2}(S)$, and thus $d_{n-1} \geq |W^A_{n-1}| + |W^B_{n-1}|$, so we can add $|W^B_{n-1}| = k$ to~$\Sigma^A$. Hence
    \begin{align*}
    \dim \Lin_n(S) - d_0 &{}= d_1 + d_2 + (d_3 + \dots + d_{n-1}) + d_n\\
    &{}\geq \max \{ j, l \} + jl + 2^{k-1} \cdot (2^{l-1} - 1) \cdot (2^j - 1) + k + 1\\
    &{}= 2^{n-2} + 2^{n-j-l-1} - 2^{n-l-1} - 2^{n-j-2} \\
    &{}+ \max \{ j, l \} + jl + (n - j - l) + 1.
    \end{align*}
    The minimum of the right-hand side over all $j$ and $l$ such that $n \geq j, l \geq n - j - l \geq 3$ with $j$ even and $l$ odd is achieved for $k = l = 3$ and $j = n - 6$. This is possible for $n \geq 10$ only. Thus
    \begin{align*}
    \dim \Lin_n(S) - d_0 &{}\geq 2^{n-2} + 2^2 - 2^{n-4} - 2^4 + (n-6) + 3(n-6) + 3 + 1 \\
    &{}= 3 \cdot 2^{n-4} + 4n - 32 \geq 3 \cdot 2^{n-4} + n - 3.
    \end{align*}
\end{itemize}

Assume now that $w = (xy)z$ is of the type (OEE). Then $n$ is odd. If $l(x) = 1$, then we consider a left swapping word of odd length $u = xy$ and a right swapping word of even length $v = z$, and then apply Lemma~\ref{lemma:descendingly-flexible-swapping-subwords}(3) to $uv$. Let now $l(x) \geq 3$. Then, in the notations of the previous case, we have 
$$
\Sigma^X = |W_3^X| + \dots + |W_{n-1}^X| =
\begin{cases}
    (2^{k-1} - 1) \cdot 2^{l-1} \cdot (2^j - 1), & X = A,\\
    (2^{l-1} - 1) \cdot 2^{k-1} \cdot (2^j - 1), & X = B,\\
    2^{l-1} \cdot (2^{j-1} - 1) \cdot (2^k - 1), & X = C,\\
    2^{k-1} \cdot (2^{j-1} - 1) \cdot (2^l - 1), & X = D.
\end{cases}
$$
We again assume without loss of generality that $l \geq k$. Then $\Sigma^A \leq \Sigma^B$ and $\Sigma^C \leq \Sigma^D$. 
\begin{itemize}
    \item If, moreover, $j \geq l$, then $\Sigma^D \geq \Sigma^B$, so in this case we can write
    \begin{align*}
    \dim \Lin_n(S) - d_0 &{}= d_1 + d_2 + (d_3 + \dots + d_{n-2}) + d_{n-1} + d_n\\
    &{}\geq j + jl + 2^{k-1} \cdot (2^{j-1} - 1) \cdot (2^l - 1) + l + 1\\
    &{}= 2^{n-2} + 2^{n-j-l-1} - 2^{n-j-1} - 2^{n-l-2} + (j + 1)(l + 1).
    \end{align*}
    Minimizing the right-hand side over all $j$ and $l$ such that $n \geq j \geq l \geq n - l - j \geq 2$ with $j$ odd and $l$ even, we obtain that the minimum is achieved for $j = n - 4$ and $k = l = 2$, which is possible for odd values of $n \geq 7$ only. Hence
    \begin{align*}
    \dim \Lin_n(S) - d_0 &{}\geq 2^{n-2} + 2 - 2^3 - 2^{n-4} + 3 (n - 3) \\
    &{}= 3 \cdot 2^{n-4} + 3n - 15 \geq 3 \cdot 2^{n-4} + n - 3.
    \end{align*}
    \item Otherwise, we have $\Sigma^B \geq \Sigma^D$, so
    \begin{align*}
    \dim \Lin_n(S) - d_0 &{}= d_1 + d_2 + (d_3 + \dots + d_{n-1}) + d_n\\
    &{}\geq l + \max \{ jl, kl \} + (2^{l-1} - 1) \cdot 2^{k-1} \cdot (2^j - 1) + 1\\
    &{}= 2^{n-2} + 2^{n-j-l-1} - 2^{n-l-1} - 2^{n-j-2} \\
    &{}+ l \cdot (\max \{ j, n-j-l \} + 1) + 1.
    \end{align*}
    The minimum of the right-hand side over all $j$ and $l$ such that $n \geq l \geq j, n - j - l \geq 2$ with $j$ odd and $l$ even is achieved for $j = 3$, $k = 2 \left\lfloor \frac{n-3}{4} \right\rfloor$ and $l = 2 \left\lceil \frac{n-3}{4} \right\rceil$. This is possible for $n \geq 9$ only. Thus $k \leq n - 7$, so 
    \begin{align*}
        \dim \Lin_n(S) - d_0 &{}\geq 2^{n-2} + 2^{k-1} - 2^{k+2} - 2^{n-5} + (3 + 1) \cdot l + 1 \\
        &{}\geq 7 \cdot (2^{n-5} - 2^{k-1}) + 4 \cdot 2\cdot \tfrac{n-3}{4} + 1 \\
        &{}\geq 7 \cdot (2^{n-5} - 2^{n-8}) + 2n - 5 = 49 \cdot 2^{n-8} + 2n - 5\\
        &{}\geq 48 \cdot 2^{n-8} + n - 3 = 3 \cdot 2^{n-4} + n - 3.
    \end{align*}
\end{itemize}

Hence the statement is proved in all cases.
\end{proof}

\begin{theorem} \label{theorem:descendingly-flexible-length}
Let $n = l(\A)$. Then 
$$
\dim \A - d_0 \geq
\begin{cases}
n, & 1 \leq n \leq 2,\\
2n - 1, & 3 \leq n \leq 5,\\
3 \cdot 2^{n-4} + n - 3, & 6 \leq n.
\end{cases}
$$
\end{theorem}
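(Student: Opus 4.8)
The plan is to reduce the statement directly to the two estimates already assembled in this section: Lemma~\ref{lemma:descendingly-flexible-low-length} for $1 \le n \le 5$ and Lemma~\ref{lemma:descendingly-flexible-high-length} for $n \ge 6$. First I would choose a generating set $S$ of $\A$ attaining the length, so that $l(S) = l(\A) = n$; such an $S$ exists by Definition~\ref{definition:algebra-length}. Then $\A = \Lin_n(S)$, whence $\dim \A - d_0 = \dim \Lin_n(S) - d_0 = d_1(S) + \dots + d_n(S)$, and moreover $d_n(S) \ge 1$ since $l(S) = n$, by Proposition~\ref{proposition:differences-length}(1). Thus the hypothesis ``$d_n(S) \ge 1$, i.e. $l(S) \ge n$'' of both lemmas is met for this particular $S$.

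For $1 \le n \le 2$ and for $3 \le n \le 5$, the inequalities $\dim \Lin_n(S) - d_0 \ge n$ and $\dim \Lin_n(S) - d_0 \ge 2n-1$, respectively, are exactly the conclusion of Lemma~\ref{lemma:descendingly-flexible-low-length} applied to this $S$. For $n \ge 6$, the inequality $\dim \Lin_n(S) - d_0 \ge 3 \cdot 2^{n-4} + n - 3$ is exactly Lemma~\ref{lemma:descendingly-flexible-high-length}. Since $\dim \A - d_0 = \dim \Lin_n(S) - d_0$, the theorem follows in all three ranges; the only bookkeeping is to make sure that the single generating set chosen at the start is the one on which all of these estimates are run.

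At this stage there is no genuine obstacle left: the theorem is a short assembly of the preparatory lemmas. The hard part has already been carried out inside Lemma~\ref{lemma:descendingly-flexible-high-length}, where one must split an arbitrary new word $w \in S^n \setminus \Lin_{n-1}(S)$ into the types (EOO), (OEE), (O11), (OO), (OE) supplied by Lemma~\ref{lemma:descendingly-flexible-bound}(3), invoke the pulling-out constructions of Lemmas~\ref{lemma:descendingly-flexible-length}, \ref{lemma:descendingly-flexible-1-subwords} and~\ref{lemma:descendingly-flexible-swapping-subwords} to produce exponentially many elements that are linearly independent modulo the intermediate spans, and then minimize the resulting lower bounds over the admissible triples of subword lengths of the prescribed parities. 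Once that minimization has been performed, the clean closed forms $n$, $2n-1$, and $3 \cdot 2^{n-4} + n - 3$ in the three ranges are precisely what the present theorem records, so it suffices to quote Lemmas~\ref{lemma:descendingly-flexible-low-length} and~\ref{lemma:descendingly-flexible-high-length} verbatim.
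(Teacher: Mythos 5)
Your proposal is correct and follows essentially the same route as the paper: pick a length-attaining generating set $S$, note that $d_n(S)\ge 1$ and $\dim\A - d_0 = \dim\Lin_n(S)-d_0$, and quote Lemmas~\ref{lemma:descendingly-flexible-low-length} and~\ref{lemma:descendingly-flexible-high-length}. The extra bookkeeping you spell out (via Proposition~\ref{proposition:differences-length}(1)) is exactly what the paper leaves implicit.
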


\begin{proof}
Choose a generating system $S$ for $\A$ such that $l(S) = l(\A)$. Then the statement immediately follows from Lemmas~\ref{lemma:descendingly-flexible-low-length} and~\ref{lemma:descendingly-flexible-high-length}.
\end{proof}

\begin{corollary} \label{corollary:descendingly-flexible-length}
It holds that
$$
l(\A) \leq
\begin{cases}
    \left\lceil \frac{\dim \A - d_0}{2} \right\rceil, & 3 \leq \dim \A - d_0 \leq 10,\\
    \lceil \log_2(\dim \A - d_0) + \log_2(8/3) \rceil, & 11 \leq \dim \A - d_0.
\end{cases}
$$
\end{corollary}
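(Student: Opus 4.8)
The plan is to deduce Corollary~\ref{corollary:descendingly-flexible-length} immediately from Theorem~\ref{theorem:descendingly-flexible-length} by contraposition, handling the two ranges of $\dim\A - d_0$ separately. Throughout I write $D = \dim\A - d_0$ and $n = l(\A)$; since $D \ge 3$ always gives $\lceil D/2\rceil \ge 2$, the asserted bound is trivial when $n \le 2$, so in each range I may assume $n \ge 3$ when arguing by contradiction.

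For $3 \le D \le 10$ I would suppose, toward a contradiction, that $n \ge \lceil D/2\rceil + 1$ (hence $n \ge 3$), and split according to which branch of Theorem~\ref{theorem:descendingly-flexible-length} applies. If $3 \le n \le 5$, the theorem gives $D \ge 2n-1$, while $n \ge \lceil D/2\rceil + 1 \ge D/2 + 1$ forces $2n \ge D + 2$, i.e. $D \le 2n - 2$; these two inequalities are incompatible. If $n \ge 6$, the theorem gives $D \ge 3\cdot 2^{n-4} + n - 3 \ge 3\cdot 2^{2} + 3 = 15 > 10 \ge D$, again a contradiction. Hence $n \le \lceil D/2\rceil$.

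For $D \ge 11$ I would first rewrite the target using $\log_2 D + \log_2(8/3) = \log_2(8D/3)$, so it reads $n \le N$ with $N = \lceil \log_2(8D/3)\rceil$. Suppose $n \ge N+1$. Then $n - 1 \ge N \ge \log_2(8D/3)$, so $2^{n-1} \ge 8D/3$, that is, $D \le 3\cdot 2^{n-4}$. Combined with $D \ge 11$ this yields $2^{n-4} \ge 11/3 > 2$, whence $n \ge 6$, so the exponential branch of Theorem~\ref{theorem:descendingly-flexible-length} applies and gives $D \ge 3\cdot 2^{n-4} + n - 3 > 3\cdot 2^{n-4} \ge D$, a contradiction. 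Therefore $n \le N$, which is exactly the claim.

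The main (and essentially only) point requiring care is to make sure the exponential branch of Theorem~\ref{theorem:descendingly-flexible-length} is invoked only when $n \ge 6$: this is automatic once $D \ge 11$, and for $3 \le D \le 10$ it is absorbed into the crude estimate $3\cdot 2^{n-4} + n - 3 \ge 15$. Beyond this bookkeeping with ceiling functions there is no genuine obstacle, since all the substantive work is already contained in Theorem~\ref{theorem:descendingly-flexible-length}.
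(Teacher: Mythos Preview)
Your proof is correct and follows essentially the same contrapositive approach as the paper: both derive a contradiction from Theorem~\ref{theorem:descendingly-flexible-length} after assuming $l(\A)$ exceeds the claimed bound, with the key observation that the hypothesis forces $l(\A)\ge 6$ so the exponential branch applies. The only differences are cosmetic---the paper dismisses the range $3\le D\le 10$ in one sentence and, for $D\ge 11$, verifies $N\ge 5$ directly from $D\ge 11$ rather than via your inequality $2^{n-4}\ge 11/3$---but the logic is identical.
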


\begin{proof}
The case when $3 \leq \dim \A - d_0 \leq 10$ immediately follows from Theorem~\ref{theorem:descendingly-flexible-length}. Let now $\dim \A - d_0 \geq 11$. We denote $n = \lceil \log_2(\dim \A - d_0) + \log_2(8/3) \rceil \geq 5$. Assume that $l(\A) \geq n + 1 \geq 6$. Then, by Theorem~\ref{theorem:descendingly-flexible-length}, we have
$$
3 \cdot 2^{n-3} \geq \dim \A - d_0 \geq 3 \cdot 2^{n-3} + (n + 1) - 3 \geq 3 \cdot 2^{n-3} + 3,
$$
a contradiction. Therefore, $l(\A) \leq n$.
\end{proof}


\begin{thebibliography}{99}
\bibitem{Elduque1}
Elduque,~A. (1997). Symmetric composition algebras. {\em J. Algebra} 196(1): 282--300. DOI: 10.1006/jabr.1997.7071
\bibitem{Elduque5}
Elduque,~A. (2018). Order $3$ elements in $G_2$ and idempotents in symmetric composition algebras. {\em Canad. J. Math.} 70(5): 1038--1075.
\bibitem{ElduqueMyung1}
Elduque, A., Myung, H.~Ch. (1991). Flexible composition algebras and Okubo algebras. {\em Comm. Algebra.} 19(4): 1197--1227. DOI: 10.1080/00927879108824198
\bibitem{ElduqueMyung3}
Elduque, A., Myung, H.~Ch. (2004). Composition algebras satisfying the flexible law. {\em Comm. Algebra.} 32(5): 1997--2013. DOI: 10.1081/AGB-120029918
\bibitem{ElduquePerez3}
Elduque, A., P\'erez, J.~M. (1997). Composition algebras with large derivation algebras. {\em J. Algebra.} 190: 372--404. DOI: 10.1006/JABR.1996.6851
\bibitem{Futorny}
Futorny, V., Horn, R.~A., Sergeichuk, V.~V. (2017). Specht's criterion for systems of linear mappings. {\em Linear Algebra Appl.} 519: 278--295. DOI: 10.1016/j.laa.2017.01.006
\bibitem{Guterman_upper-bounds}
Guterman, A.~E., Kudryavtsev, D.~K. (2020). Upper bounds for the length of non-associative algebras. {\em J. Algebra.} 544: 483--497. DOI: 10.1016/j.jalgebra.2019.10.030
\bibitem{Guterman_sequences}
Guterman, A.~E., Kudryavtsev, D.~K. (2020). Characteristic sequences of non-associative algebras. {\em Comm. Algebra.} 48(4): 1713--1725. DOI: 10.1080/\-00927872.\-2019.1705469
\bibitem{Guterman_quadratic}
Guterman, A.~E., Kudryavtsev, D.~K. (2021). Length function and characteristic sequences of quadratic algebras. {\em J. Algebra.} 579: 428--455. DOI: 10.1016/j.jalgebra.2021.04.001
\bibitem{Guterman_slowly-growing}
Guterman, A.~E., Kudryavtsev, D.~K. (2022). Algebras of slowly growing length. {\em  Internat. J. Algebra Comput.} DOI: 10.1142/S0218196722500564
\bibitem{our_standard-composition-algebras}
Guterman, A.~E., Zhilina, S.~A. (2022). On the lengths of standard composition algebras. {\em Comm. Algebra.} 50(3): 1092--1105. DOI: 10.1080/00927872.2021.1977945
\bibitem{GZh_Okubo}
Guterman, A.~E., Zhilina, S.~A. (2023). On the lengths of Okubo algebras. Preprint.
\bibitem{Markova}
Markova, O.~V. (2010). Matrix algebras and their length, in: {\em Matrix methods: theory, algorithms and applications.} Hackensack, NJ: World Sci. Publ., pp. 116--139. DOI: 10.1142/9789812836021\_0007
\bibitem{McCrimmon}
McCrimmon, K. (2004). {\em A Taste of Jordan Algebras.} New York, NY: Springer-Verlag. DOI: 10.1007/b97489
\bibitem{Pappacena}
Pappacena, C.~J. (1997). An upper bound for the length of a finite-dimensional algebra. {\em J. Algebra.} 197(2): 535--545. DOI: 10.1006/jabr.1997.7140
\bibitem{Paz}
Paz, A. (1984). An application of the Cayley--Hamilton theorem to matrix polynomials in several variables. {\em Linear Multilin. Algebra.} 15(2): 161--170. DOI: 10.1080/\-030810884\-08817585
\bibitem{Pearcy}
Pearcy, C. (1962). A complete set of unitary invariants for operators generating finite $W^*$-algebras of type~I. {\em Pacific J. Math.} 12: 1405--1416. DOI: 10.2140/PJM.1962.12.1405
\bibitem{Schafer}
Schafer, R. D. (1954). On the algebras formed by the Cayley-Dickson process. {\em Amer. J. Math.} 76(2): 435--446. DOI: 10.2307/2372583
\bibitem{Shitov}
Shitov, Y. (2019). An improved bound for the lengths of matrix algebras. {\em Algebra Number Theory.} 13(6): 1501--1507. DOI: 10.2140/ant.2019.13.1501
\bibitem{Specht}
Specht, W. (1940). Zur Theorie der Matrizen II. {\em  Jber. Deutsch. Math.-Verein.} 50: 19--23.
\end{thebibliography}
\end{document}